\documentclass{article}
\usepackage[dvipsnames]{xcolor}
\usepackage{caption}
\usepackage{appendix}

\usepackage{comment}
\usepackage{tikz-cd}
\usepackage{subcaption}
\usepackage[percent]{overpic}

\usepackage{fullpage}
\usepackage{amsmath}
\usepackage{amsfonts}
\usepackage{amssymb}
\usepackage{mathtools}
\usepackage{stmaryrd}
\usepackage{bm}
\usepackage{amsthm}
\usepackage{xcolor} 

\usepackage{graphicx}
\usepackage{caption}

\newcommand{\nc}{\newcommand}

\nc{\dmo}{\DeclareMathOperator}
\nc{\nt}{\newtheorem}

\nt{theorem}{Theorem}[section]
\nt{corollary}[theorem]{Corollary}
\nt{observation}[theorem]{Observation}
\nt{lemma}[theorem]{Lemma}
\nt{criterion}[theorem]{Criterion}
\nt*{theorem*}{Theorem}
\nt{question}[theorem]{Question}
\nt{proposition}[theorem]{Proposition} 
\newtheorem*{maintheorem}{Main Theorem}

\newtheoremstyle{named}{}{}{\itshape}{}{\bfseries}{.}{.5em}{\thmnote{#3}}
\theoremstyle{named}

\theoremstyle{definition}
\nt{exercise}{Exercise}
\counterwithin{figure}{section}

\dmo{\Push}{Push}
\dmo{\Mod}{Mod}
\dmo{\Id}{Id}
\dmo{\Aut}{Aut}
\dmo{\Out}{Out}
\dmo{\Teich}{Teich}
\dmo{\Homeo}{Homeo}
\dmo{\BHomeo}{BHomeo}
\dmo{\PMod}{PMod}
\dmo{\SMod}{SMod}
\dmo{\SI}{{\mathcal{SI}}}
\dmo{\I}{\mathcal{I}}
\dmo{\Sp}{Sp}
\dmo{\spl}{\mathfrak{sp}}
\dmo{\PSp}{PSp}
\dmo{\PSL}{PSL}
\dmo{\SL}{SL}
\dmo{\GL}{GL}
\dmo{\PB}{PB}
\dmo{\B}{B}
\dmo{\D}{D}
\dmo{\Brun}{Brun}
\dmo{\sign}{sign}

\nc{\algam}{\alpha \mbox{-}\Gamma}
\nc{\BI}{\mathcal{BI}}
\nc{\Moduli}{\Moody}
\nc{\Moody}{\mathbb M}
\nc{\C}{\mathbb C}
\nc{\Z}{\mathbb Z}
\nc{\Q}{\mathbb Q}
\nc{\R}{\mathbb R}
\nc{\F}{\mathcal F}
\nc{\mcP}{\mathcal{P}}
\nc\Heven{H_1(D_n^0; \Z/2)^{\textrm{even}}}

\nc{\flm}{\lambda_{2}}

\nc{\normalclosure}[1]{\ensuremath{\left \langle \left \langle #1 \right \rangle \right \rangle}}

\nc{\p}[1]{\paragraph{{\bf #1}}}

\newtheorem{innercustomgeneric}{\customgenericname}
\providecommand{\customgenericname}{}
\newcommand{\newcustomtheorem}[2]{%
  \newenvironment{#1}[1]
  {%
   \renewcommand\customgenericname{#2}%
   \renewcommand\theinnercustomgeneric{##1}%
   \innercustomgeneric
  }
  {\endinnercustomgeneric}
}

\newcustomtheorem{customthm}{Theorem}
\newcustomtheorem{reference}{Reference}
\newcustomtheorem{customcor}{Corollary}
\newcustomtheorem{open-problem}{Open Problem}

\usepackage{lineno}
\usepackage{xifthen}

\title{\bf The Burau representation is faithful for $n = 4$}
\author{Vasudha Bharathram \and Joan S. Birman \and Tara E. Brendle}

\begin{document}

\date{\today}

\maketitle

\begin{abstract}
    In this paper we use ideas introduced earlier by Moody \cite{moody}, Long \cite{long}, Long--Paton \cite{Long-Paton} and Bigelow \cite{bigelow} to prove the theorem of the title, that the Burau representation of the classical braid group $\B_4 $ is faithful. An immediate corollary is that the Jones representation of $\B_4$ is also faithful.  
\end{abstract}

\section{Introduction}
\label{section:intro}

Let $D_n$ be a disk with $n$ marked points ${p_1,p_2, \dots, p_n}$ in its interior.  The {\it braid group} $\B_n$  will be interpreted in this paper as the mapping class group $\Mod(D_n)$.  In 1935 Werner Burau introduced a representation of $\B_n$ into $\GL_{n-1}(\Z[t,t^{-1}])$, now known as the reduced Burau representation \cite{Burau}.  We will work mainly with the unreduced Burau representation $\rho_n:\B_n \to \GL_n(\Z[t,t^{-1}])$, and henceforth we refer to $\rho_n$ simply as the Burau representation. 
 
Magnus--Peluso first established the faithfulness of the Burau representation for $n = 3$ by a direct algebraic calculation \cite{magnuspeluso}.   Moody later proved that $\rho_n$ is not faithful for $n \geq 9$ \cite{moody}.  Long--Paton built on Moody's ideas to improve this to $n \geq 6$ \cite{Long-Paton}, and Bigelow later added the case $n = 5$ \cite{bigelow}.  

In the years following Bigelow's paper there have been many attempts to detect elements in the kernel of the remaining case $\rho_4$, or to narrow the search for such,  each of which contributed to our understanding of this question as viewed through the lens of various areas of mathematics; see for example Alperin--Farb--Noskov \cite{AFN}, Beridze--Traczyk \cite{Ber-Tra2, Ber-Tra},  Calvez--Ito \cite{Calvez-Ito}, Datta \cite{datta}, Dlugie \cite{Dlugie},  Fullarton--Shadrach \cite{FullartonShadrach},  Gibson--Williamson--Yacobi~\cite{GWY}, and Witzel--Zaremsky \cite{Witzel-Zaremsky}.  The question of faithfulness for $\rho_4$ also appears as Question 3.1 in Margalit's problem list for mapping class groups \cite{Margalit:Problems}.  

\begin{maintheorem}
The Burau representation $\rho_4$ of $\B_4$ is faithful.  
\end{maintheorem}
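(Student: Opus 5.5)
We will prove the Main Theorem through the homological reformulation of the Burau representation used by Moody, Long--Paton and Bigelow, and then show that the obstruction to faithfulness cannot occur when $n=4$. Let $\widetilde{D}_n$ be the infinite cyclic cover of $D_n$ associated to the total winding number map $\pi_1(D_n)\to\Z=\langle t\rangle$. Then $\rho_n$ is identified with the action of $\B_n=\Mod(D_n)$ on $H_1(\widetilde D_n,\widetilde{\partial};\Z[t,t^{-1}])$, the homology relative to lifts of a basepoint on $\partial D_n$. For arcs $\alpha,\beta$ in $D_n$ we use the standard $\Z[t^{\pm1}]$-valued pairing $\langle\alpha,\beta\rangle=\sum_{x\in\alpha\cap\beta}\epsilon_x t^{a_x}$. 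Here $\epsilon_x$ is the sign of the intersection and $a_x$ is the winding exponent of the loop that runs along $\alpha$ to $x$ and returns along $\beta$.

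The key criterion, due to Moody and in the form of Bigelow and Long--Paton, is the following. If $\rho_n$ is not faithful, there is a non-identity $\varphi$ in the kernel. Let $\alpha$ be a fixed arc, say a standard fork arc from $p_1$ to $p_2$ or an arc from $\partial D_n$ to a puncture. Then $\beta=\varphi(\alpha)$ is not isotopic to $\alpha$, yet $\langle\alpha,\beta\rangle=0$ after $\alpha,\beta$ are put in minimal position. Conversely, faithfulness follows once we show the following statement.

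\textbf{Claim.} For $n=4$, whenever $\beta$ is an arc not isotopic to $\alpha$ that meets $\alpha$ essentially, $\langle\alpha,\beta\rangle\neq0$.

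The reduction works as follows. Suppose $\varphi$ fixes every such arc $\alpha$ up to isotopy. Then $\varphi$ is a power of the central twist $\Delta^2$, and $\rho_4(\Delta^2)=t^{4}\cdot(\text{unipotent})$ is nontrivial for nonzero powers.

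To prove the Claim we put $\alpha$ and $\beta$ in minimal position and study the sequence of intersection points along $\alpha$ together with their exponents $a_x$ and signs $\epsilon_x$. Cancellation in $\langle\alpha,\beta\rangle$ requires pairs of points with equal exponent and opposite sign. We aim to show that the extremal exponent is achieved with a single sign, for example by the points where $a_x$ is maximal. The steps, in order, are:
\begin{itemize}
\item Cut $D_4$ along $\alpha$ and decompose $\beta$ into subarcs in the complement, which is a disk with only $2$ or $3$ punctures. Classify these subarcs by which punctures they separate; in the complement each subarc changes the exponent by a controlled amount.
\item Show, by an induction over the subarcs or a train-track style normal form for $\beta$ relative to $\alpha$, that the exponent along $\beta$ behaves like a height function. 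The points realizing the maximal height are then forced by minimality (no bigons) and planarity to have the same sign.
\end{itemize}
With so few punctures, the subarc types are few enough that every configuration can be enumerated. This is exactly where $n\ge5$ fails: Bigelow's example uses enough room to create an arc with canceling extremal contributions.

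We expect the main obstacle to be this sign-coherence at the extremal exponent. Its difficulty comes from the fact that $\beta$ may spiral many times around pairs of punctures, so no finite case check suffices directly. We would first attempt to encode $\beta$ by Dynnikov-type or normal coordinates with respect to a triangulation adapted to $\alpha$. We would then prove, by induction on complexity under half-twists, that the leading coefficient of $\langle\alpha,\beta\rangle$ in $t$ equals $\pm$ a positive count, and hence is nonzero. If that fails, the fallback is to use the $\B_4\to\B_3$ structure (the kernel free group $F_3$ together with Magnus--Peluso faithfulness for $\B_3$). That would reduce the question to showing that $\ker\rho_4$ meets the free subgroup trivially, and then to apply the arc criterion only to point-pushing arcs.

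Finally, since the Jones representation of $\B_4$ factors through Temperley--Lieb and contains the Burau representation as a summand, it is also faithful.
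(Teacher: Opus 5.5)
\textbf{There is a genuine gap: the Claim is never proved.} Via Moody's criterion, your Claim is equivalent to the theorem itself, so everything rests on it. The only mechanism you offer is that the exponent along $\beta$ acts as a ``height function,'' and that the crossings of extremal exponent are forced by minimality and planarity to carry a single sign. This is asserted, not argued. You yourself call it the main obstacle, say no finite case check suffices, and then list methods you ``would attempt'' plus a fallback. Nothing in the proposal rules out cancellation.

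This is not a formality.
\begin{itemize}
\item For $n=4$, cancellation genuinely occurs for arcs in minimal position. The arc in the paper's final section has unsimplified polynomial $t^0+t^2+t^4-t^3-t+t^{-3}+t^{-1}+t$, whose linear terms cancel. So the parity argument that settles $n=3$ cannot be run arc by arc in $D_4$.
\item The local picture does not help either. At a crossing where the exponent is locally maximal, the incoming disk lies to the right of $\beta$ and the outgoing disk to its left. Both crossing directions are compatible with this, since the two configurations differ by a half-turn about the crossing.
\item Hence any sign-coherence statement for extremal terms needs global input about which punctures the neighbouring disks contain, and you supply none.
\end{itemize}
A secondary point: the criterion should pair $\alpha$ with $\varphi(\beta_0)$ for an arc $\beta_0$ disjoint from $\alpha$ with distinct endpoints, e.g.\ $\beta_*^3$. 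Pairing with $\varphi(\alpha)$ is not the right formulation.

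\textbf{What the paper does instead.} It never proves $\Moody(\alpha,\beta)\neq 0$ for every arc.
\begin{itemize}
\item By Long's theorem it suffices to treat nontrivial $\Phi\in\Brun_4\subset K_4$, which are pseudo-Anosov.
\item It uses the relative form of Moody's criterion: $\Phi\notin\ker\rho_n$ as soon as $\Moody_{\Phi\cdot\Gamma}\neq\Moody_{\Gamma}$ for some $\Gamma$.
\item After conjugation, $\Phi$ is a proper product $\Phi'\cdot\Gamma_1$ with a specific simple push-map $\Gamma_1$. This restricts the disks in the disk sequence to five types, and only the nested $4$-disks violate the parity condition.
\item It then passes to $\B_5$, where a kernel element of $\rho_4$ stays in the kernel of $\rho_5$. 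A push $\Gamma$ of $p_5$ into those nested $4$-disks turns them into sign-changing $5$-disks.
\item Now both $f(\Phi)\cdot\Gamma$ and $\Gamma$ satisfy parity, so neither Moody polynomial has cancellation. Each polynomial's absolute coefficient sum therefore equals the corresponding geometric intersection number, and $\Gamma$ is chosen so that these differ.
\end{itemize}
Your fallback of reducing to the free point-pushing subgroup is close to the paper's first step. But that is exactly where the real work is needed: proper products, the $\B_5$ correction, and the relative criterion. Your proposal gives no indication of how to carry it out.
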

As a first step to proving this result, we will give a new and simple topological proof of the faithfulness of the Burau representation in the case $n = 3$.  We will then adapt these ideas to the case of $n=4$ by exploiting the structure of point-pushing maps in $\B_4$.  

This approach is in contrast to earlier work on $\B_n$ for $n\geq 5$ and attempts to prove that the Burau representation $\rho_4$ is not faithful.  In particular, we do not consider elements in the image of $\rho_4$ that potentially generate a free group. We refer the reader to the second author's book \cite[Theorem 3.19]{birmanbook}, where this approach was first introduced.  In addition to the papers mentioned above, see also Bigelow~\cite[Section 3]{bigelow} and Moran~\cite{Moran} for details of other approaches.  

\p{Application to the Jones representation.}  In his seminal work, Jones introduced a representation of the braid group that contains the (reduced) Burau representation as a summand; for a full definition we refer the reader to his paper \cite{jones}.  As Jones points out, the faithfulness of Burau representation $\rho_n$ for any $n$ implies the faithfulness of the Jones representation of the braid group $B_n$. Therefore we immediately obtain the following corollary of our Main Theorem.
\begin{corollary}
The Jones representation of $\B_n$ is faithful for $n = 4$.
\end{corollary}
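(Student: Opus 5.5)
The statement to prove is the Corollary: the Jones representation of $\B_4$ is faithful. The plan is to deduce it formally from the Main Theorem, using only the structural fact recalled from \cite{jones}: the Jones representation of $\B_n$ contains the reduced Burau representation as a direct summand. There is no new topology involved. The work consists of moving from the unreduced representation $\rho_4$ of the Main Theorem to the reduced one, and then observing that faithfulness passes from a summand to the whole representation.

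First, I will check that faithfulness of the unreduced $\rho_4$ implies faithfulness of the reduced Burau representation $\bar\rho_4:\B_4\to\GL_3(\Z[t,t^{-1}])$. Over the field $\Q(t)$, the unreduced representation is the direct sum of the reduced one and the trivial one-dimensional representation. Concretely, the vector $(1,\dots,1)$ spans an invariant line on which $\B_n$ acts trivially, and after a change of basis defined over $\Q(t)$ there is a complementary invariant subspace carrying $\bar\rho_n$. This splitting is standard, and over the Laurent ring one can at least use a block upper-triangular form with diagonal blocks $\bar\rho_n$ and $1$. Now suppose $\bar\rho_4(\beta)=I$. Then $\rho_4(\beta)$ is conjugate over $\Q(t)$ to $\bar\rho_4(\beta)\oplus 1 = I$, so $\rho_4(\beta)=I$. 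The Main Theorem then gives $\beta=1$.

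Second, let $\pi$ be the Jones representation, and suppose $\pi(\beta)$ is the identity. Since $\bar\rho_4$ is a direct summand of $\pi$, either over $\Z[t,t^{-1}]$ or after extending scalars, the restriction of $\pi(\beta)$ to the summand is $\bar\rho_4(\beta)$, and this must also be the identity. The first step then gives $\beta=1$, so $\pi$ is injective.

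The main obstacle is the care needed at the level of coefficient rings and specializations. If Jones's representation is defined over a ring where $t$ is a specialization or a different variable, such as $q$ with $t$ a function of $q$, I will need the Burau summand to appear with $t$ generic, so that no information is lost. I will take this from \cite{jones}, where it is noted that faithfulness of Burau implies faithfulness of his representation. If only the unreduced form is needed there, the first step can be skipped.
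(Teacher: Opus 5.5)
Your argument is correct and is essentially the paper's: the paper obtains the corollary by combining the Main Theorem with Jones's observation that his representation contains the reduced Burau representation as a summand, so that faithfulness of Burau forces faithfulness of the Jones representation. Your extra step from the unreduced $\rho_4$ to the reduced representation makes explicit a detail the paper leaves implicit. That step uses the splitting over $\Q(t)$, which holds because the invariant vector $(1,1,1,1)$ pairs with the invariant functional $(1,t,t^2,t^3)$ to give $1+t+t^2+t^3\neq 0$.
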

We also refer the reader to Kasahara's subsequent work on the Jones representation, in particular to his explanation of the equivalence between the faithfulness of the Burau representation and the Jones representation in the case $n = 4$ \cite[Remark 5.6]{kasahara}.

\p{Strategy of proof.}  Let $K_i$ denote the point-pushing subgroup of $\B_n$ obtained as the kernel of the Birman exact sequence for the disk $D_n$ by ``forgetting'' the $i^{th}$ marked point in the disk $D_n$; see Section~\ref{section:minimal position} for more details.  The intersection $\cap_{i=1}^n K_i$ of all point-pushing subgroups in $\B_n$ is known as the {\it Brunnian group} $\Brun_n$, and it is a normal subgroup of $\B_n$.   A theorem of Long states that the Burau represention $\rho_n$ is faithful on $B_n$ if it is faithful on any nontrivial noncentral normal subgroup of $\B_n$~\cite[Theorem 2.2]{long}.  Suppose now that $\ker(\rho_4)$ were nontrivial.  By Long's theorem, $\ker(\rho_4)$ intersects $\Brun_4$ nontrivially. 

\begin{proposition}
\label{prop:reduction to K_4}
    If the Burau representation $\rho_4$ is faithful on its restriction to the Brunnian group $\Brun_4$, then it is faithful on $\B_4$.
\end{proposition}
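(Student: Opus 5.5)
We argue by contrapositive, with Long's theorem \cite[Theorem 2.2]{long} as the main input: if $\rho_n$ is faithful on some nontrivial noncentral normal subgroup of $\B_n$, then it is faithful on $\B_n$. So it is enough to check that $\Brun_4$ is such a subgroup of $\B_4$. The proof then has three steps.

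\textbf{Step 1: normality.} Each $K_i$ is the kernel of the forgetful map $\Mod(D_4)\to\Mod(D_3)$ that forgets $p_i$. Strictly speaking, this map is only defined on the stabilizer of $p_i$, that is, on the pure braid group $\PB_4$. So each $K_i$ is normal in $\PB_4$, and conjugation by an element of $\B_4$ sends $K_i$ to $K_{\sigma(i)}$, where $\sigma$ is the induced permutation. Hence conjugation by $\B_4$ permutes the $K_i$, and their intersection $\Brun_4=\cap_{i=1}^4 K_i$ is normal in $\B_4$. This is also stated earlier in the paper.

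\textbf{Step 2: nontriviality.} We exhibit an explicit nontrivial Brunnian braid. Take the pure braid generators $A_{ij}$, which push $p_i$ once around $p_j$. The iterated commutator
\[
[[A_{12},A_{23}],A_{34}]
\]
is a standard choice. It becomes trivial after forgetting any one strand: forgetting $p_1$ or $p_2$ kills $A_{12}$, forgetting $p_3$ kills $A_{23}$ and $A_{34}$, and forgetting $p_4$ kills $A_{34}$. To see that it is nontrivial, use that $K_4$ is free on the pushes of $p_4$ around loops in $D_3$. A cleaner choice is therefore an element of $K_4\cong F_3$ written as a word in the free generators $x_1,x_2,x_3$, for example $[[x_1,x_2],x_3]$. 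This element is nontrivial in $F_3$. It also dies when any $x_j$ is deleted, which handles forgetting $p_1,p_2,p_3$, and it lies in $K_4$, which handles forgetting $p_4$.

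\textbf{Step 3: noncentral.} The center of $\B_4$ is the infinite cyclic group generated by the full twist $\Delta^2$. The full twist is not in $K_i$, since forgetting a strand sends it to the full twist of $\B_3$, which is nontrivial. The same holds for every nonzero power of $\Delta^2$. So $\Brun_4\cap Z(\B_4)=1$. Since $\Brun_4$ is nontrivial, it is not contained in the center, so it is noncentral.

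\textbf{Conclusion and obstacle.} Now suppose $\rho_4$ is faithful on $\Brun_4$. By Long's theorem, $\rho_4$ is faithful on $\B_4$. The only step that takes any work is confirming nontriviality in Step 2, and the free-group description of $K_4$ settles it. The nontrivial input is entirely Long's theorem.
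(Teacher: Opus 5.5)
Your proof is correct and is essentially the paper's argument: the proposition is Long's theorem \cite[Theorem 2.2]{long} applied to the normal subgroup $\Brun_4$. The paper simply asserts normality and points to Figure~\ref{figure:brunnian} for nontriviality, whereas you check normality, nontriviality (via $[[x_1,x_2],x_3]\in K_4\cong F_3$) and noncentrality explicitly. One harmless slip: the stabilizer of $p_i$ is strictly larger than $\PB_4$, but the forgetful map restricted to $\PB_4$ still has kernel $K_i$, so your Step 1 is unaffected.
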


Building on ideas of Long--Paton and Bigelow, we associate to each braid in $\B_n$ a sequence of disks in $D_n$ that carry certain combinatorial data.  We use this technology to give a new proof of Magnus--Peluso's theorem that the Burau representation $\rho_3$ is faithful~\cite{magnuspeluso}.  Our proof yields the more general result that any $n$-strand braid whose associated disk sequence satisfies a certain {\it parity condition}, defined in Section~\ref{section:faithfulness for n=3}, does not lie in the kernel of $\rho_n$.  

Braids in $\B_n$ do not in general satisfy this parity condition when $n \geq 4$.  However, we show that when $n = 4$, the parity condition ``almost'' holds for any point-pushing braid $\Phi \in K_4$ that admits a factorization as a {\it proper product} of certain push-maps.  The next step is to ``correct'' $\Phi$ by embedding $K_4$ in $\B_5$, that is, we use $\Phi$ to construct a 5-braid satisfying the parity condition.  We then apply a result of Moody to conclude that the original braid $\Phi \in K_4$ does not lie in the kernel of $\rho_4$.  The final step is to show that any braid in $K_4$ is conjugate to another braid in $K_4$ that can be realized as a proper product of push-maps.  

Finally, we remark that, {\it a priori}, it is not necessarily clear that $\Brun_n$ is nontrivial.  The simplest example of a Brunnian braid is a three-strand braid that closes to form the Borromean rings.  Figure~\ref{figure:brunnian} gives an example of a nontrivial element of $\Brun_4$, the case of interest to us, and a similar construction yields nontrivial examples of Brunnian braids in $\B_n$ for any $n$. Moreover, Whittlesey has shown that all nontrivial Brunnian braids are pseudo-Anosov \cite{kw}.  We note that Whittlesey's results are stated for the mapping class group of the $(n+1)$-punctured sphere, but the result holds in $\B_n$; see Lee-Song's discussion of this point \cite[Section 1]{LeeSong}.   
\begin{figure}[htpb!]
    \centering

    \includegraphics[width=.4\linewidth]{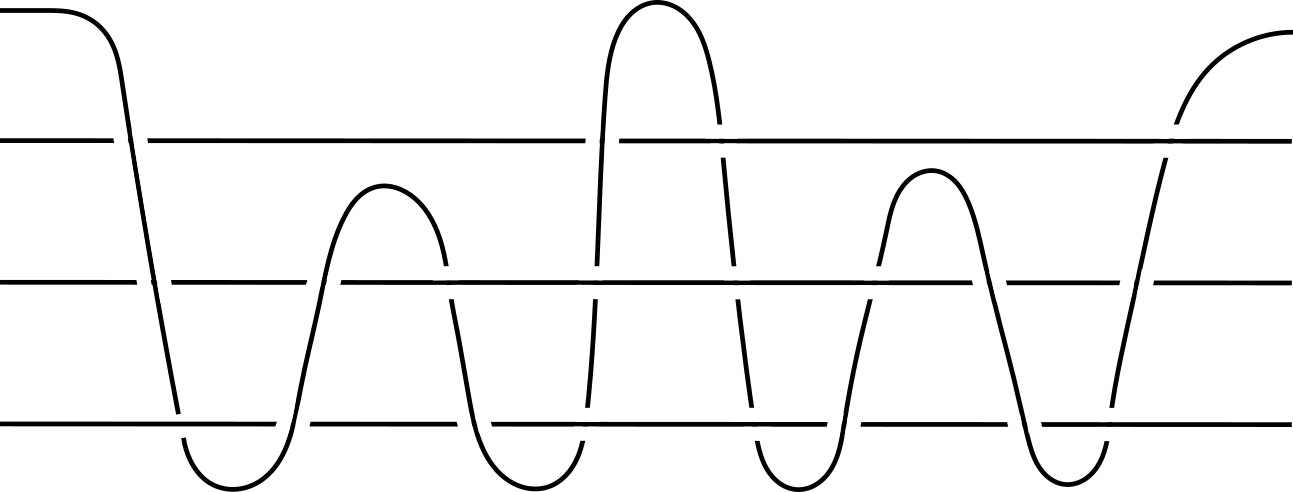}
        \caption{A Brunnian 4-braid $\beta$.}
    \label{figure:brunnian}
\end{figure}

\p{Kernel and image of Burau.}  There is a great deal of literature on the faithfulness of the Burau representation, and this paper settles that question in the final outstanding case.  The questions of giving both a useful characterization of its kernel and of its image remain wide open in general, and the authors believe that the methods of this paper may be useful in achieving progress on these important questions.  

\p{Outline of the paper.}  
In Section~\ref{section:Moody and CP}, we introduce the Moody polynomial of a braid, an invariant that provides an obstruction for an element to lie in the kernel of the Burau representation.  Building on Moody's ideas and their development by Long--Paton, Bigelow later introduced combinatorial tools that are useful for computing the obstruction; we describe these in Section~\ref{section:disk sequences}.  Using these tools, in Section~\ref{section:faithfulness for n=3}, we give our new topological proof of faithfulness of the Burau representation in the  case $n=3$.  We introduce our key technical tool, {\it proper products} of point-pushing braids, in Section~\ref{section:minimal position}.  In Section~\ref{section:disks in B_4} we construct point-pushing maps arising as particular proper products, and use these to establish that the Burau representation $\rho_4$ is faithful on its restriction to the point-pushing group $\Brun_4$; by Proposition~\ref{prop:reduction to K_4}, this completes the proof of the Main Theorem.  Finally, we give an example illustrating the constructions used in our proof in Section~\ref{section:appendix}.

{\bf Acknowledgements.} The authors heartily thank Dan Margalit for many helpful discussions and are especially grateful for his extensive and valuable feedback on initial drafts of this paper.  
%His comments and insights have substantially improved our paper and led us to a more intuitive and streamlined version of our proof.  
The second author would also like to thank him for his interest in this project from its inception seven years ago, when the first two authors had begun to work on it together.  The authors also thank Khalid Bou-Rabee, Michael Dougherty, and Benson Farb for additional helpful comments on earlier drafts and Ken Birman for further helpful remarks. The first author thanks David Gabai for his invaluable mentorship and constant encouragement. The third author is also grateful to the Institute for Computational and Experimental Research in Mathematics (ICERM) at  Brown University, where in 2022 she was a participant in the semester-long program ``Braids'', during which initial discussions that led to her joining this project took place, and to Muffy Calder for her support and encouragement throughout.  AI was used to proofread a draft of the manuscript.

%%%%%%%%%%%%%%
%%%%%%%%%%%%%%
\section{The Moody polynomial and winding numbers}
\label{section:Moody and CP}
 While Burau originally defined his representation by giving its values on the standard Artin generators of $\B_n$ (see, for example, \cite[Section 4.2]{birmanbrendle}), it will be more useful for present purposes to define $\rho_n$ via the action of $\B_n$ on the disk $D_n$, which we consider here as an $n$-times punctured disk.  Let $p_*$ denote a point in the boundary of the disk $D_n$, and let $x_1, \dots, x_n$ denote the standard free generators of $\pi_1(D_n, p_*)$.  We consider the mapping $\pi_1(D_n, p_*) \to \Z$ that takes a word in $x_1, \dots x_n$ to the sum of its exponents, and let $\widetilde{D_n}$ denote the covering space associated with its kernel.  

The group of covering transformations of $\widetilde{D_n}$ is isomorphic to $\mathbb{Z}$, which we denote as a multiplicative group generated by $t$. Following the treatment of Long--Paton, it will be convenient for us to work with the relative homology group $H_1(\widetilde{D_n}, \{\widetilde{p_*}\})$, where $\{\widetilde{p_*}\}$ denotes the full pre-image of the basepoint $p_*$ in the cover $\widetilde{D_n}$.  The braid group $\B_n$ acts on the $\mathbb{Z}[t, t^{-1}]$-module $H_1(\widetilde{D_n}, \{\widetilde{p_*}\})$, which is free of rank $n$; this action is the {\it unreduced Burau representation} $\rho_n$, which we will refer to simply as the Burau representation.  

We next record a basic fact that follows directly from the definition of $\rho_n$.
\begin{observation}
\label{observation:basic}
    Let $f: \B_n \rightarrow \B_{n+1}$ be the inclusion map corresponding to the standard embedding of $D_n$ into $D_{n+1}$.  If $\Phi \in \B_n$ lies in the kernel of $\rho_n$, then $f(\Phi)$ lies in the kernel of $\rho_{n+1}$.
\end{observation}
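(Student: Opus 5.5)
The plan is to compare the two covering spaces directly and realize the homology of the smaller one as a direct summand of the larger. Regard $D_n \subset D_{n+1}$ via the standard embedding, with the extra marked point $p_{n+1}$ lying in $D_{n+1}\setminus D_n$. Choose the basepoint $p_*$ on $\partial D_n$ so that it also lies on $\partial D_{n+1}$, or connect it to $\partial D_{n+1}$ by an arc avoiding the marked points. A braid $\Phi$ supported in $D_n$ then extends by the identity to give $f(\Phi)$.

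First, compare the covers. The exponent-sum map $\pi_1(D_{n+1},p_*)\to\Z$ restricts, on the subgroup generated by $x_1,\dots,x_n$, to the exponent-sum map of $D_n$. Hence the preimage of $D_n$ in $\widetilde{D_{n+1}}$ is a copy of $\widetilde{D_n}$, with the same deck group $\langle t\rangle$. Work with the free $\Z[t^{\pm1}]$-bases of relative homology given by lifts of the standard arcs/loops $\tilde x_1,\dots,\tilde x_n$ starting at a fixed lift $\widetilde{p_*}$. The basis of $H_1(\widetilde{D_{n+1}},\{\widetilde{p_*}\})$ is then $\tilde x_1,\dots,\tilde x_{n+1}$. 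Inclusion sends the basis of $H_1(\widetilde{D_n},\{\widetilde{p_*}\})$ to the first $n$ of these, so it is injective and the image is a direct summand.

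Second, compute the action. The lift of $f(\Phi)$ fixing $\widetilde{p_*}$ restricts on $\widetilde{D_n}$ to the lift of $\Phi$, so it acts on the span of $\tilde x_1,\dots,\tilde x_n$ by $\rho_n(\Phi)$. The lift is also the identity on the preimage of $D_{n+1}\setminus D_n$, which contains the lift of $x_{n+1}$ based at $\widetilde{p_*}$. We must check that this lift avoids the interior of $D_n$; this holds if $x_{n+1}$ is chosen as a loop encircling only $p_{n+1}$ without entering $D_n$. Therefore $\tilde x_{n+1}$ is fixed, and
\[
\rho_{n+1}(f(\Phi))=\begin{pmatrix}\rho_n(\Phi)&0\\0&1\end{pmatrix}.
\]
If $\rho_n(\Phi)=I_n$, this matrix is $I_{n+1}$, which proves the claim.

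The main obstacle is bookkeeping rather than anything conceptual: choosing the basepoint and the generator $x_{n+1}$ compatibly with the standard embedding, so that the lifts and the fixed-point convention for the lift of the mapping class agree in both covers. The Artin-generator description gives an alternative check: $f(\sigma_i)=\sigma_i$ for $i<n$, and the unreduced Burau matrix of $\sigma_i$ in $\B_{n+1}$ is the $\B_n$ matrix block-summed with $1$. Since $f$ is a homomorphism, this block form holds for every $\Phi$, and the claim follows immediately.
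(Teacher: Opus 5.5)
Your proof is correct and takes essentially the paper's approach: the paper gives no written argument beyond remarking that the observation follows directly from the definition of $\rho_n$. Your block decomposition $\rho_{n+1}(f(\Phi))=\rho_n(\Phi)\oplus 1$ is exactly the unpacking of that remark, whether you obtain it from the inclusion $\widetilde{D_n}\subset\widetilde{D_{n+1}}$ on relative homology or from the Artin-generator matrices, since $\rho_{n+1}\circ f$ and $\rho_n\oplus 1$ are homomorphisms agreeing on generators.
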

Observation~\ref{observation:basic} will be crucial in our proof of the Main Theorem.

\begin{figure}[htpb!]
\centering
    \begin{overpic}[width=0.8\linewidth]{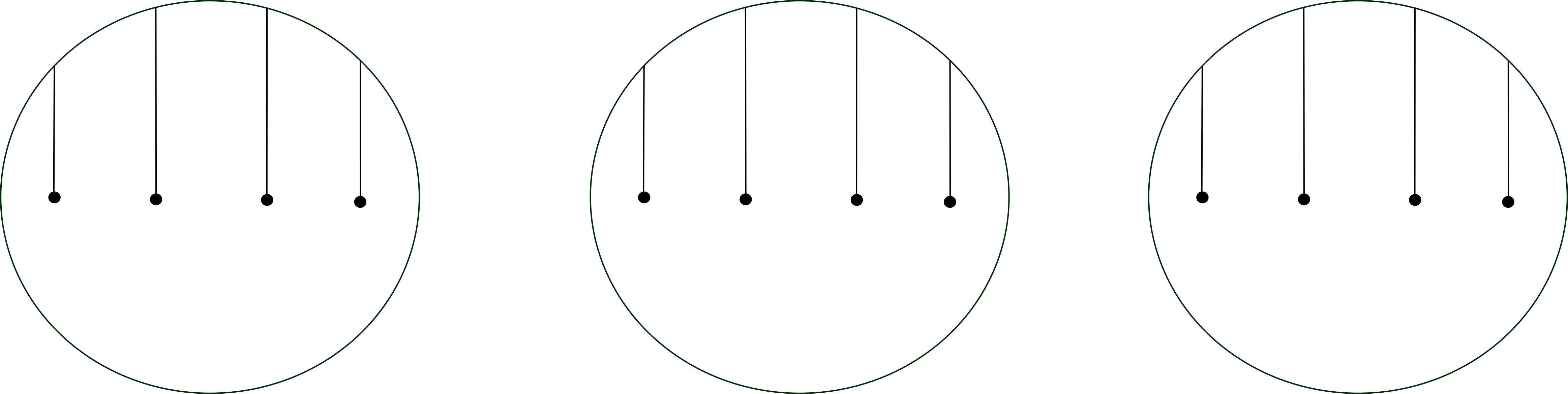}
    \put(-7,14){$\dots$}
    
    \put (1.5,15.5){\tiny $+$} 
    \put (4,15.5){\tiny $-$} 
    \put (8,15.5){\tiny $+$} 
    \put (10.7,15.5){\tiny $-$} 
    \put (14.8,15.5){\tiny $+$} 
    \put (17.7,15.5){\tiny $-$} 
    \put (21,15.5){\tiny $+$} 
    \put (23.2,15.5){\tiny $-$} 

    \put (39,15.5){\tiny $+$} 
    \put (42,15.5){\tiny $-$} 
    \put (44.9,15.5){\tiny $+$} 
    \put (48,15.5){\tiny $-$} 
    \put (52,15.5){\tiny $+$} 
    \put (55,15.5){\tiny $-$} 
    \put (58,15.5){\tiny $+$} 
    \put (61,15.5){\tiny $-$} 

    \put (74.5,15.5){\tiny $+$} 
    \put (77,15.5){\tiny $-$} 
    \put (80.9,15.5){\tiny $+$} 
    \put (83.5,15.5){\tiny $-$} 
    \put (88,15.5){\tiny $+$} 
    \put (91,15.5){\tiny $-$} 
    \put (94,15.5){\tiny $+$} 
    \put (96.5,15.5){\tiny $-$} 

    \put(12, -3) {$t^0$}
    \put(49, -3) {$t^1$}
    \put(86,-3) {$t^2$}

    \put(103,14){$\dots$}

    \put(3, 10){$p_1$}
    \put(9, 10){$p_2$}
    \put(16, 10){$p_3$}
    \put(22, 10){$p_4$}

    \put(40, 10){$p_1$}
    \put(47, 10){$p_2$}
    \put(53, 10){$p_3$}
    \put(59, 10){$p_4$}

    \put(76, 10){$p_1$}
    \put(82, 10){$p_2$}
    \put(88, 10){$p_3$}
    \put(94, 10){$p_4$}
    
    \end{overpic}
    \vspace{2mm}
    \caption{The zeroth, first, and second ``decks'' in the universal cyclic cover $\widetilde{D_4}$ of the disk $D_4$. } 
    \label{figure:cover}
\end{figure}

{\bf Moody polynomials.} In what follows, we will closely follow Bigelow's notation and exposition \cite{bigelow}. We will use the term {\it arc} to refer to a proper embedding of an interval in the disk $D_n$ and {\it subarc} to refer to its restriction to a compact connected subinterval.  Our arcs will normally be oriented, and it will often be convenient to consider arcs in terms of their image in the disk $D_n$ rather than as functions per se. 

Let $\alpha$ and $\beta$ be two oriented arcs in $D_n$, considered as a disk with marked points, whose endpoints lie in the set $\{ p_1, \ldots, p_n, p_* \}$, and choose two corresponding lifts $\tilde{\alpha}$ and $\tilde{\beta}$ in $\widetilde{D_n}$.  We define the {\it Moody polynomial  $\Moody(\alpha, \beta) \in \Z [t, t^{-1}]$  of the oriented arcs $\alpha$ and $\beta$} as follows: 
\[
\Moody(\alpha, \beta) = \sum_{\ell \in \Z} (t^\ell \cdot \tilde{\alpha}, \tilde{\beta}) t^\ell
\]
where $(t^\ell \cdot \tilde{\alpha}, \tilde{\beta})$ denotes the algebraic intersection number of the lift $t^\ell \cdot \tilde{\alpha}$ with $\tilde{\beta}$ in the cover $\widetilde{D_n}$.  We note that the algebraic intersection number of arcs does not include intersections at any shared endpoints; in our case, we will always choose our arcs so that the endpoints of $\alpha$ are disjoint from those of $\beta$.  We also note that the Moody polynomial $\Moody(\alpha, \beta)$ is only well defined up to multiplication by a power of $t$, due to its dependence on our choice of the lifts $\tilde{\alpha}$ and $\tilde{\beta}$.  Our convention will be to choose our lifts so that the first point at which $\beta$ crosses $\alpha$ is assigned the monomial $\pm t^0$.

Following Bigelow \cite{bigelow} and Long--Paton \cite{Long-Paton}, we now describe a planar interpretation of the Moody polynomial that enables us to work entirely in the base space $D_n$ rather then in the cover.  Without loss of generality, we can assume that the oriented arcs $\alpha$ and $\beta$ intersect transversely in finitely many points; we label these $q_1, \ldots, q_m$ according to the order in which they appear as we traverse the arc $\beta$ according to its orientation.  Each intersection point $q_i$ corresponds to a single point of intersection between two lifts $\tilde{\alpha}$ and $\tilde{\beta}$, and hence each point $q_i$ contributes a monomial $\pm t^{k_i}$ to the overall sum in the Moody polynomial.  Here the exponent $k_i$ is such that $\tilde{\beta}$ and $t^{k_i} \cdot \tilde{\alpha}$ cross at a lift of $q_i$, and the sign $\epsilon_i$ of the monomial is the sign of that crossing where $\epsilon_i \in \{ -1, 1 \}$.  Then an equivalent formulation of the Moody polynomial of the arcs $\alpha$ and $\beta$ is as follows:
\[
 \Moody(\alpha, \beta) = \sum_{i = 1}^m \epsilon_i t^{k_i}.
\]
We emphasize that it may happen that $k_i = k_j$ for $i \neq j$.  Indeed, our business in this paper will be to determine conditions under which this could happen.  When we write the Moody polynomial as the sum of $m$ terms of the form $\epsilon_i t^{k_i}$ without combining like terms, we will refer to this as the {\it unsimplified Moody polynomial}; when we combine all like terms we will refer to this as the {\it simplified Moody polynomial}.  
Moody showed that the polynomial $\Moody(\alpha, \beta)$ completely encodes the faithfulness of the Burau representation \cite{moody}. We will use one direction of his characterization; see Theorem~\ref{theorem:moody2}.

{\bf Conventions and color-coding.} For the remainder of this paper, we will always choose the arc $\alpha$ to be the horizontal arc joining $p_1$ to $p_2$ oriented from left to right, and we will color it blue in all figures, as shown in Figure~\ref{figure:std-arcs}.  We also let $\beta_*^3$ denote the oriented arc shown in Figure~\ref{figure:std-arcs} from the basepoint $p_*$ to the puncture $p_3$ and color it red.  
\begin{figure}[htpb!]
    \centering
    \begin{overpic}[scale=1.7]{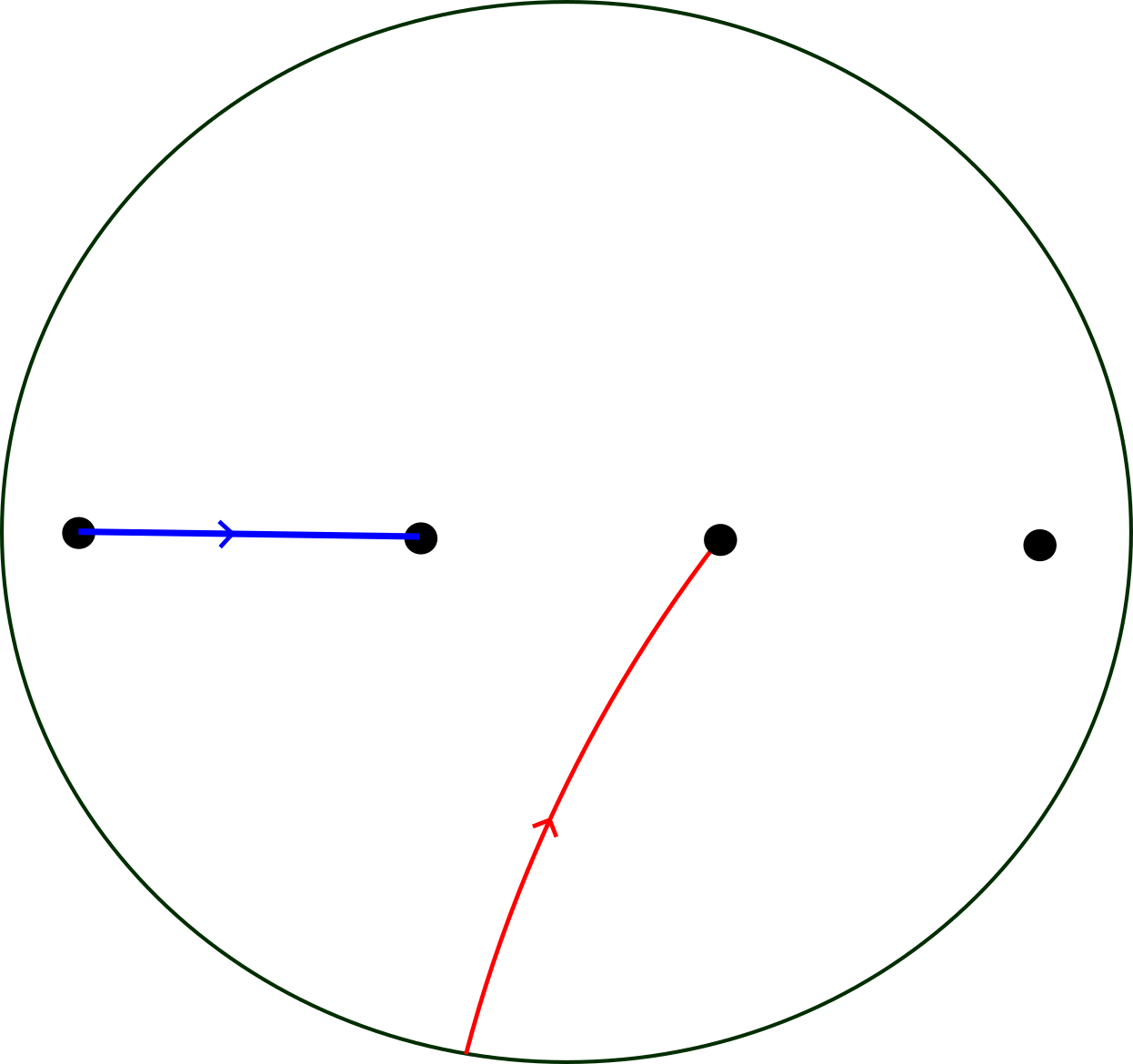}
        \put(22,40){\textcolor{blue}{$\alpha$}}
        \put(50,20){\textcolor{red}{$\beta_*^3$}}
        \put(38,-5){$p_*$}
        \put(9,53){$p_1$}
        \put(35,53){$p_2$}
        \put(60,53){$p_3$}
        \put(85,53){$p_4$}
    \end{overpic}
    \vspace{2mm}
    \caption{The blue  arc $\alpha$ joins $p_1$ to $p_2$. The red arc $\beta_*^3$ joins the basepoint on $\partial D$ to the point $p_3$. }
      \label{figure:std-arcs}
\end{figure}

When we apply a braid $\Phi \in \B_n$ to the arc $\beta_*^3$, we will usually denote this by $\beta = (\beta_*^3) \Phi$ and also color $\beta$ red; note that we write $\Phi$ on the right of the arc to which we are applying it because we will follow the tradition of writing multiplication of braids from left to right.  

Furthermore, from this point on, a blue (respectively red) arc will always be a subarc of $\alpha$  (respectively $\beta$). Later we will add gold to our list of special colors.  Our sign convention will be to say that $q_i$ is a positive crossing, that is, $\epsilon_i = 1$, if $\beta$ is directed downwards at $q_i$ and that $q_i$ is a negative crossing with $\epsilon_i = -1$ if $\beta$ is directed upwards at $q_i$.

For any arcs $\gamma$ and $\delta$ in the disk $D_n$, we let $\iota(\gamma, \delta)$ denote their geometric intersection number.  Consider now our fixed choice of arc $\alpha$.  A remarkable result of Moody states that the non-faithfulness of the Burau representation is equivalent to the existence of an arc $\beta$ joining the basepoint $p_*$ to the marked point $p_3$ in $D_n$ such that $\iota(\alpha, \beta) > 0$ and $\Moody(\alpha, \beta) = 0$ \cite{moody}.  With this in mind, we define the {\it Moody polynomial of the braid} $\Phi \in \B_n$ as follows: 
\begin{align*}
\Moody_{\Phi} = \Moody(\alpha, (\beta_*^3)\Phi ). 
\end{align*}
We remark that by taking the boundary of a regular neighborhood of the arc $(\beta_*^3)\Phi$, we can view as a loop based at $p_*$ traveling around $p_3$, and that $\Moody_{\Phi}$ is well defined on the homology class of a lift of $[(\beta_*^3)\Phi]$ in  $H_1(\widetilde{D_n}, \{\widetilde{p_*}\})$, which we denote by $[(\beta_*^3)\Phi]$.   We also emphasize that an element $\Phi \in \B_n$ uniquely determines an arc $\beta = (\beta_*^3)\Phi$, but the converse does not hold: there exist infinitely many pairs $\Phi_1, \Phi_2 \in \B_n$ for which $(\beta_*^3) \Phi_1 = (\beta_*^3) \Phi_2$.  We refer the reader to Long--Paton~\cite[Section 1]{Long-Paton} and Bigelow~\cite[Theorem 1.4]{bigelow} for further details.  For our purposes, we do not need the full strength of Moody's result; we will only require one direction of the equivalence. 
\begin{theorem}[Moody]
\label{theorem:moody0}
Let $n \geq 3$.  If, given any oriented arc $\beta$ from $p_*$ to $p_3$ such that $\iota(\alpha, \beta) > 0$, we have that $\Moody(\alpha, \beta) \neq 0$, then the Burau representation $\rho_n$ is faithful. Equivalently, if we have that $\Moody_\Phi \neq 0$, for any braid $\Phi$ such that $\Phi(p_3) = p_3$ and $\iota(\alpha, (\beta_3^*)\Phi) > 0$, then the Burau representation $\rho_n$ is faithful. 
\end{theorem}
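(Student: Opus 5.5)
We prove the contrapositive, following Moody and Long--Paton. Suppose $\rho_n$ is not faithful, and pick a nontrivial $\Psi \in \ker(\rho_n)$. We must produce an oriented arc $\beta$ from $p_*$ to $p_3$ with $\iota(\alpha,\beta)>0$ and $\Moody(\alpha,\beta)=0$.

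\emph{Step 1: choose a good arc.} Since $\Psi$ acts trivially on $H_1(\widetilde{D_n},\{\widetilde{p_*}\})$, the lift of $(\beta_*^3)\Psi$ is homologous to the lift of $\beta_*^3$, up to the usual ambiguity of a power of $t$, which does not affect vanishing of $\Moody$. The arc $\beta_*^3$ is disjoint from $\alpha$, so any lift of $\alpha$ has algebraic intersection $0$ with it in every deck. Hence $\Moody(\alpha,(\beta_*^3)\Psi) = \Moody(\alpha,\beta_*^3)=0$.

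The point needing care is that the pairing must be a homological invariant on the relevant classes. The arc $\alpha$ has endpoints at punctures, so its lifts define classes in a locally finite (Borel--Moore) homology of the cover relative to the punctures. The algebraic intersection pairing of such a class with $H_1(\widetilde{D_n},\{\widetilde{p_*}\})$ is well defined. Endpoints cause no trouble because those of $\alpha$ lie in $\{p_1,p_2\}$ and those of $\beta$ lie in $\{p_*,p_3\}$. So $\Moody(\alpha,\cdot)$ factors through the Burau module, which gives the vanishing above.

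\emph{Step 2: ensure $\iota>0$.} We need some $\Psi\in\ker\rho_n$ with $\iota(\alpha,(\beta_*^3)\Psi)>0$. Argue by contradiction: suppose every element of the kernel sends $\beta_*^3$ to an arc disjoint from $\alpha$.

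The kernel is normal, so conjugating by arbitrary braids shows that kernel elements send every arc of type $\beta_*^3$ to an arc disjoint from the correspondingly transported arc $\alpha$. Running this over all pairs, and using the symmetry of relabelling punctures via braids, every kernel element preserves, up to isotopy, the disjointness pattern of all pairs (arc between two punctures, arc from boundary to a third puncture).

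Using the change-of-coordinates principle and rigidity of the arc complex (an arc $\gamma$ is determined by which such arcs miss it), this forces $\Psi$ to fix every arc between punctures, so $\Psi$ is a power of the full twist. The full twist acts on the Burau module by a nontrivial scalar power of $t$ (the central element maps to $t^{n}\cdot\mathrm{Id}$ on the reduced part), so it is not in the kernel unless it is trivial. This contradicts $\Psi\neq 1$.

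This step is the main obstacle, and I would first try to streamline it via Long's normal subgroup theorem quoted in the introduction, which reduces to a noncentral kernel element with visibly nontrivial action on arcs. Explicitly: if some kernel element $\Psi$ moves $\beta_*^3$ at all, then $\Psi$ composed with a suitable conjugate $g\Psi g^{-1}$ (again in the kernel) produces an arc meeting $\alpha$ essentially.

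\emph{Step 3: the equivalent formulation.} The second formulation follows because arcs from $p_*$ to $p_3$ are exactly the images $(\beta_*^3)\Phi$ for braids $\Phi$ fixing $p_3$, by the change-of-coordinates principle. Hence the two hypotheses coincide, and the definition $\Moody_\Phi=\Moody(\alpha,(\beta_*^3)\Phi)$ translates one into the other.
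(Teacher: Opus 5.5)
\paragraph{Comparison with the paper.} Your Step 3 is exactly the paper's argument. The paper does not prove the first formulation at all: it quotes it from Moody, with pointers to Long--Paton and Bigelow's Theorem~1.4, and justifies only the equivalence of the two formulations, by change of coordinates.

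Your Steps 1--2 go beyond the paper by reconstructing the cited result along the standard Long--Paton/Bigelow lines. Invariance of the pairing under $\ker\rho_n$ gives $\Moody(\alpha,(\beta_*^3)\Psi)=\Moody(\alpha,\beta_*^3)=0$. Normality of the kernel, together with $\rho_n$ being nontrivial on nonzero powers of the full twist, produces a kernel element with $\iota(\alpha,(\beta_*^3)\Psi)>0$. The paper gains brevity by citing; your version is self-contained and shows where $n\ge 3$ and the hypothesis $\iota>0$ enter. The outline is sound, but three points need tightening.

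\paragraph{Purity.} You need kernel elements to be pure, so that $(\beta_*^3)\Psi$ still ends at $p_3$ and $(a)\Psi^{-1}$ has the same endpoints as $a$. This holds because setting $t=1$ in $\rho_n$ gives the permutation representation.

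\paragraph{Step 1 needs a loop, not an arc.} The lift of an arc ending at the puncture $p_3$ is not a class in $H_1(\widetilde{D_n},\{\widetilde{p_*}\})$, and your remark about disjoint endpoints does not address this. Replace $\beta$ by the loop $\ell_\beta$ based at $p_*$ given by the boundary of a regular neighbourhood of $\beta$, as the paper indicates. Its lift runs from $\widetilde{p_*}$ to $t^{\pm1}\widetilde{p_*}$, so it is a genuine relative cycle, and the properly embedded $\widetilde{\alpha}$ pairs with such cycles in a homology-invariant way. Each crossing $q_i$ contributes $\epsilon_i t^{k_i}$ on the outgoing strand and $-\epsilon_i t^{k_i\pm1}$ on the return strand. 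So the pairing equals $(1-t^{\pm1})\Moody(\alpha,\beta)$ up to a unit, and since $\Z[t,t^{-1}]$ is a domain, vanishing transfers.

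\paragraph{Step 2 rigidity.} Your rigidity claim is true in the direction you use it, but it needs proof, and it is false in the dual direction.
For $a$ joining $p_i$ to $p_k$, choose a third puncture $p_l$ and two arcs $b_0,b_1$ from $p_*$ to $p_l$. Take them disjoint from $a$ and from each other, with union bounding a disk that contains only $p_i$ and $p_k$. Your hypothesis gives $\iota((a)\Psi^{-1},b_j)=0$ for $j=0,1$. Hence $(a)\Psi^{-1}$ lies in that twice-punctured disk and equals $a$. The Alexander method on the standard chain of arcs then shows that $\Psi$ is a power of the full twist.

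By contrast, an arc $b$ from $p_*$ is not determined by the arcs between punctures that miss it: $b$ and its image under the full twist miss exactly the same arcs. This is why the conclusion can only be ``power of the full twist''. Your appeal to the full twist acting as $t^n$ on the reduced part correctly closes that case.

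The speculative alternative via Long's theorem and a conjugate $g\Psi g^{-1}$ is unsupported and not needed; drop it.
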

The equivalence of the two statements follows from the fact that every oriented arc $\beta$ from $p_*$ to $p_3$ arises as $\beta = (\beta_*^3) \Phi$ for some $\Phi \in \B_n$ by the change of coordinates principle; note that such a braid $\Phi$ necessarily fixes the point $p_3$.  It will be useful for our purposes to adapt Moody's theorem to obtain a specific obstruction to an individual braid lying in the kernel of $\rho_n$, for which we give a short proof for the sake of completeness.

\begin{theorem}[Moody]
\label{theorem:moody2}
    Let $\Phi \in \B_n$ for $n \geq 3$.  If $\Moody_{\Phi\cdot \Gamma} \neq \Moody_{\Gamma}$ for some $\Gamma\in B_n$, then $\Phi$ does not lie in the kernel of $\rho_n$. 
\end{theorem}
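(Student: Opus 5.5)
We argue by contraposition: assume $\Phi\in\ker(\rho_n)$ and show that $\Moody_{\Phi\cdot\Gamma}=\Moody_\Gamma$ for every $\Gamma\in\B_n$. The strategy is to express $\Moody_\Psi$ for any braid $\Psi$ as a function of the homology class $[(\beta_*^3)\Psi]\in H_1(\widetilde{D_n},\{\widetilde{p_*}\})$, and then to observe that this class depends only on $\rho_n(\Psi)$ applied to a fixed class.

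\textbf{Step 1: dependence on homology only.} For fixed $\tilde\alpha$, the map $\tilde\beta\mapsto \sum_\ell (t^\ell\cdot\tilde\alpha,\tilde\beta)\,t^\ell$ is a sesquilinear (twisted $\Z[t,t^{-1}]$-linear) pairing.

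To make it a genuine homological pairing, replace the arc $\beta$ by the boundary of a regular neighbourhood of $\beta$, as the paper remarks. This gives a loop based at $p_*$ running around $p_3$, and hence a well-defined class in $H_1(\widetilde{D_n},\{\widetilde{p_*}\})$. Doubling the arc this way multiplies the polynomial by a fixed factor (of the form $1-t$, up to a unit), so it is harmless.

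Because $\tilde\alpha$ is a lift of an arc between punctures, its endpoints lie away from the relative boundary $\{\widetilde{p_*}\}$. Algebraic intersection with $t^\ell\tilde\alpha$ is therefore invariant under homotopy of the loop rel basepoints. Hence the intersection number factors through the relative homology class, and we may write
\[
\Moody_\Psi = \langle \tilde\alpha, [(\beta_*^3)\Psi]\rangle
\]
up to the unit $\pm t^k$ ambiguity and the fixed factor.

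\textbf{Step 2: action of the braid.} With right actions, $(\beta_*^3)(\Phi\Gamma)=((\beta_*^3)\Phi)\Gamma$. At the level of homology this reads
\[
[(\beta_*^3)\Phi\Gamma] = \rho_n(\Gamma)\bigl(\rho_n(\Phi)[\beta_*^3]\bigr).
\]
If $\rho_n(\Phi)=\Id$, this equals $\rho_n(\Gamma)[\beta_*^3]=[(\beta_*^3)\Gamma]$. Applying Step 1 gives $\Moody_{\Phi\Gamma}=\Moody_\Gamma$, up to multiplication by a unit, which is exactly the ambiguity already built into the definition.

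\textbf{Main obstacle.} The delicate point is Step 1: justifying that the intersection pairing against lifts of $\alpha$ is well defined on relative homology. One must handle that $\tilde\alpha$ is a non-compact chain (its endpoints are punctures), that the normalisation convention for lifts may shift by powers of $t$, and that passing from the arc to the loop is compatible with the convention. I would handle this by working in the cover with punctures removed and taking $\tilde\alpha$ as a locally finite (Borel--Moore) class. Poincar\'e--Lefschetz duality then pairs it with $H_1(\widetilde{D_n},\{\widetilde{p_*}\})$, because $\tilde\alpha$ is disjoint from a neighbourhood of $\partial$. Consequently "$\neq$" in the hypothesis must be read modulo units $\pm t^k$, consistent with the paper's convention on lifts.
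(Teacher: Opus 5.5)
Your proposal is correct and is essentially the paper's argument: $\Moody_\Psi$ depends only on the class $[(\beta_*^3)\Psi]\in H_1(\widetilde{D_n},\{\widetilde{p_*}\})$, and if $\Phi\in\ker(\rho_n)$ then $[(\beta_*^3)(\Phi\cdot\Gamma)]=[(\beta_*^3)\Gamma]$. The paper obtains this equality via $\Gamma^{-1}\Phi\Gamma\in\ker(\rho_n)$, you via $\rho_n(\Phi)=\Id$ followed by the action of $\Gamma$. Your added details---the factor $1-t$ from doubling the arc (cancellable since $\Z[t,t^{-1}]$ is a domain), the locally finite pairing with $\tilde\alpha$, and reading the inequality modulo units $\pm t^k$---fill in what the paper only asserts, and the modulo-units reading is exactly what the paper's later application of this result requires.
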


\begin{proof}
    Suppose that $\Phi$ lies in the kernel of $\rho_n$.  Then $\Gamma^{-1}\cdot \Phi \cdot \Gamma$ must also lie in the kernel.  We have that $(\beta_*^3)\cdot \Gamma$ corresponds to an element $[(\beta_*^3)\cdot \Gamma]$ in the homology of the cover $\widetilde{D_n}$ relative to the pre-image of a basepoint.  Since $\Gamma^{-1}\cdot \Phi\cdot \Gamma$ acts trivially on this homology group, we have that $[(\beta_*^3) \cdot (\Phi\cdot \Gamma)]= [((\beta_*^3) \cdot \Gamma) \cdot (\Gamma^{-1}\cdot \Phi\cdot \Gamma)] = [(\beta_*^3) \cdot \Gamma]$.  The result now follows from our previous observation that $\Moody_\Phi$ well defined on the homology class $[(\beta_*^3)\Phi]$ in   $H_1(\widetilde{D_n}, \{\widetilde{p_*}\})$. 
    \end{proof}

 In Section~\ref{section:disks in B_4} we will identify certain products of braids in $\B_4$ to which we will then apply Theorem~\ref{theorem:moody2} in order to establish the faithfulness of $\rho_4$.  Moreover, as discussed in Section~\ref{section:intro}, we ultimately reduce our search to the Brunnian subgroup $\Brun_n$ in $\B_n$, where all nontrivial braids are pseudo-Anosov, and hence for all $\Phi \in \Brun_n$ we have that $\iota(\alpha,(\beta^3_*)\cdot\Phi) > 0$.  We will also use the following special case of Theorem~\ref{theorem:moody2}.
\begin{corollary}
\label{corollary:kernel element}
    Let $\Phi \in \B_n$ for $n \geq 3$.  If $\Moody_\Phi \neq 0$, then $\Phi$ does not lie in the kernel of $\rho_n$. 
\end{corollary}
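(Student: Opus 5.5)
The plan is to derive Corollary~\ref{corollary:kernel element} from Theorem~\ref{theorem:moody2} by taking $\Gamma$ to be the identity braid. With this choice, the hypothesis of Theorem~\ref{theorem:moody2} becomes the single inequality $\Moody_{\Phi} \neq \Moody_{1}$. So it suffices to show that $\Moody_1 = 0$ (up to the usual ambiguity of multiplication by a unit $\pm t^k$). The assumption $\Moody_\Phi \neq 0$ then gives $\Moody_\Phi \neq \Moody_1$ under any choice of normalization.

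To compute $\Moody_1 = \Moody(\alpha, \beta_*^3)$, I will use the standard arcs of Figure~\ref{figure:std-arcs}. The blue arc $\alpha$ is the horizontal segment from $p_1$ to $p_2$. The red arc $\beta_*^3$ runs from the boundary basepoint $p_*$ directly to $p_3$. I will choose $\beta_*^3$ within its isotopy class, as drawn, so that it is disjoint from $\alpha$: it approaches $p_3$ from below and never crosses the segment $[p_1,p_2]$. Then there are no intersection points $q_i$, so the unsimplified Moody polynomial is the empty sum, and $\Moody(\alpha,\beta_*^3)=0$.

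This computation is legitimate because $\Moody$ depends only on the homology class $[\beta_*^3]$ in $H_1(\widetilde{D_n},\{\widetilde{p_*}\})$, as remarked before Theorem~\ref{theorem:moody2}. In particular, it is invariant under isotopy of the arc rel endpoints, so computing it with a disjoint representative is allowed.

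The only subtle point, and essentially the sole obstacle, is the normalization. $\Moody$ is defined only up to a power of $t$, and the convention "first crossing is $\pm t^0$" is vacuous when there are no crossings. I will observe that the zero polynomial is fixed by multiplication by any unit. So the statement "$\Moody_\Phi\neq 0$" is well defined, and $\Moody_\Phi \ne t^k\cdot 0$ for every $k$. The proof of Theorem~\ref{theorem:moody2} shows that if $\Phi\in\ker\rho_n$, then $[(\beta_*^3)\Phi]=[\beta_*^3]$ in the relative homology. This would force $\Moody_\Phi=\Moody_1=0$, contradicting the hypothesis, and so $\Phi\notin\ker\rho_n$.
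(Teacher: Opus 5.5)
Your proposal is correct and follows the paper's route: the paper presents this corollary as the special case $\Gamma = 1$ of Theorem~\ref{theorem:moody2}. There $\Moody_1 = \Moody(\alpha,\beta_*^3) = 0$ because the standard arc $\beta_*^3$ is disjoint from $\alpha$, and your observation that $0$ is unaffected by the power-of-$t$ normalization ambiguity is a detail the paper leaves implicit.
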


\section{Disk sequences and total winding number} 
\label{section:disk sequences}
Let $\beta = (\beta_*^3)\Phi$, and as above, we label the points of intersection of $\alpha$ with $\beta$ by $q_1, \ldots, q_m$ according to the order in which we encounter them as we travel along $\beta$.  For $i = 1, \ldots m-1$, we let $\alpha_i$ and $\beta_i$ denote the subarcs of $\alpha$ and $\beta$, respectively, that join the intersection point $q_i$ to $q_{i+1}$.  Note that, with our choice of notation, this means that each arc $\beta_i$ contains no intersection point $q_j$ in its interior, while an arc $\alpha_i$ may contain a number of other intersection points in its interior.  

By construction, for each $i = 1, \ldots m-1$, we have that $\alpha_i \cup \beta_i$ bounds a $k$-punctured disk $\Delta_i$.  We define the {\it total winding number} $W_i$ associated to the disk $\Delta_i$ to be equal to $k$ if $\beta_i$ is oriented clockwise with respect to $\Delta_i$, or to be equal to $-k$ otherwise.  We will refer to the sequence $W_1, \ldots, W_{m-1}$ as the {\it winding number sequence} of $\Phi$, and to the sequence $\Delta_1, \Delta_2, \ldots, \Delta_{m-1}$ as the {\it disk sequence} of $\Phi$; see Figure~\ref{figure:cp-examples} for examples.  

We will consider two such disks $\Delta_i, \Delta_j$ to be {\it equivalent} if there is an isotopy of $(D_n, \alpha)$ taking the $\beta$-component of $\partial \Delta_i$ to the the $\beta$-component of $\partial \Delta_j$; in other words  $\alpha$ is fixed setwise and the endpoints of the $\beta$-arc must be contained in $\alpha$ at each level of the isotopy).  We note that this notion of equivalence corresponds to allowing isotopies of the arc $\beta$ in the disk $D_n$.  

We will see that the combinatorial data of the winding number sequence is sufficient for determining the faithfulness of the Burau representation in the case $n = 3$, and that the more detailed information of the disk sequence is required in the case $n = 4$.  The following lemma appears as a remark in Bigelow's paper \cite[Section 3]{bigelow}.  His remark describes the relationship between winding numbers and the degree $k_i$ of the monomial that each intersection point $q_i$ contributes to the Moody polynomial.
     \begin{figure}[htpb!]

        \centering
        \begin{subfigure}{.45\linewidth}\centering
                    \includegraphics[width=0.76\linewidth]{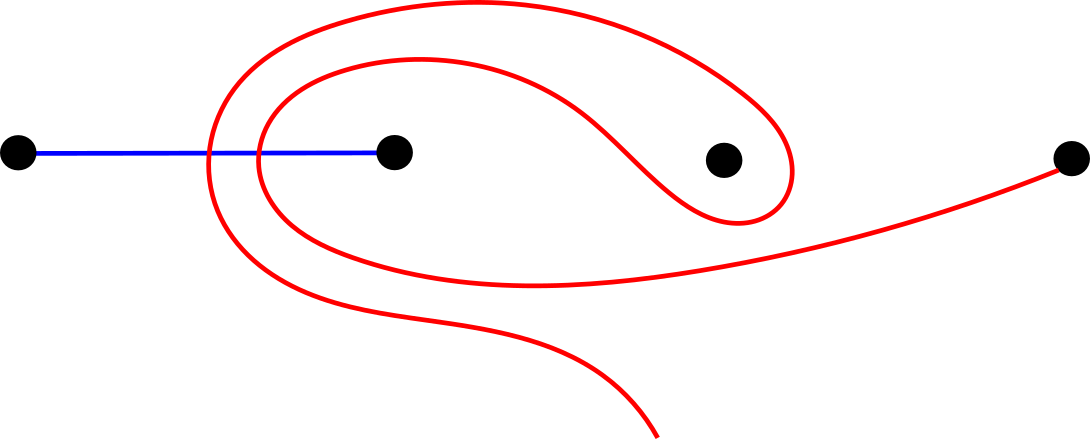}
        \caption{}
        \label{figure:cp-example-1}
        \end{subfigure}
        \begin{subfigure}{.5\linewidth}\centering
                    \includegraphics[width=0.7\linewidth]{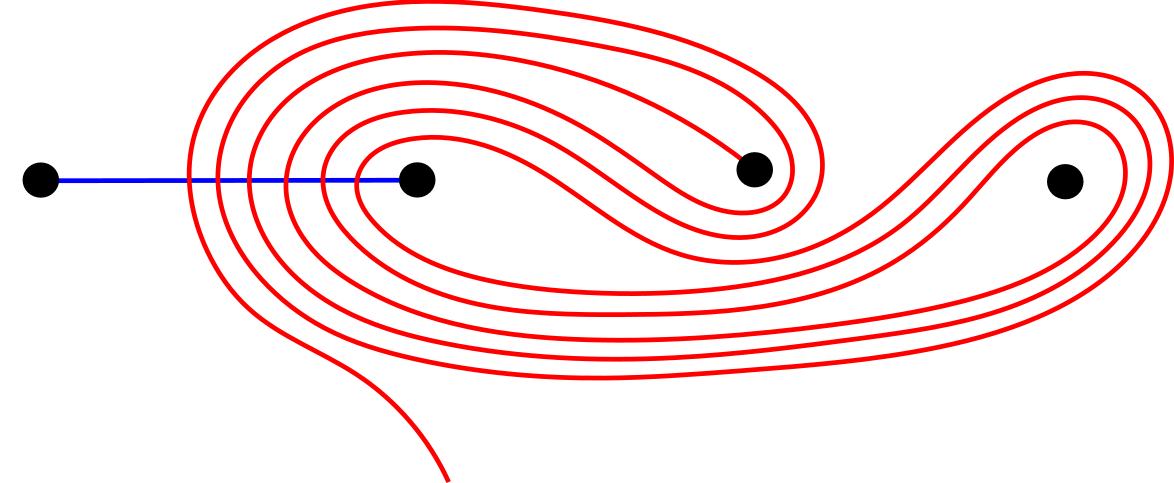}
           \caption{}
                % \label{figure:cp-example-2}
        \end{subfigure}
        \caption{The arcs $\alpha$ and $\beta$ in (a) give rise to a singleton winding number sequence given by $W_1=1$, with corresponding Moody polynomial $1-t$. The winding number sequence for the arcs in (b) is given by $1, 1, -1, -2, 1$; its Moody polynomial is $2 - 2t + t^2 - t^{-1}$. }
        \label{figure:cp-examples}
    \end{figure}

\begin{lemma}[Bigelow \cite{bigelow}] 

\label{lemma:observation:planarpoly}
Let $q_i$ and $q_{i+1}$ be two points in $\alpha \cap \beta$ that are consecutive with respect to the oriented arc $\beta$, and let $k_i$ and $k_{i+1}$ be the exponents of the monomials corresponding to $q_i$ and $q_{i+1}$.  If $W_i$ denotes the $i ^{th}$ term in the winding number sequence for $\beta$, then we have:
\begin{equation}  
W_{i}=k_{i+1} - k_i.
\end{equation}
\end{lemma}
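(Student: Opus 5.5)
The plan is to compute the change in deck level along the closed loop $\alpha_i \cup \beta_i$ directly in the cyclic cover $\widetilde{D_n}$. The exponent $k_i$ records which translate $t^{k_i}\cdot\tilde\alpha$ the fixed lift $\tilde\beta$ meets over $q_i$. As we travel along $\tilde\beta$ from the lift $\tilde q_i$ to the next point $\tilde q_{i+1}$ lying over $q_{i+1}$, we stay on $\tilde\beta$. The point $\tilde q_{i+1}$ lies on $t^{k_{i+1}}\cdot\tilde\alpha$. Now return to the sheet containing $\tilde q_i$ by travelling along a lift of $\alpha_i$ (run backwards). Since $\alpha$ avoids the punctures in its interior, any lift of $\alpha_i$ stays on a single translate $t^{k_{i+1}}\cdot\tilde\alpha$. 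It therefore ends at $t^{k_{i+1}-k_i}\cdot\tilde q_i$. Consequently the lift of the closed loop $\gamma_i=\beta_i\cdot\overline{\alpha_i}$ starting at $\tilde q_i$ ends at $t^{k_{i+1}-k_i}\tilde q_i$. By definition of the cover, this means the image of $[\gamma_i]$ under the exponent-sum map $\pi_1(D_n)\to\Z$ equals $k_{i+1}-k_i$, up to a global sign convention on $t$.

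The second step is to compute that exponent sum geometrically. The loop $\gamma_i$ is the boundary of the embedded disk $\Delta_i$, because $\beta_i$ has no interior intersections with $\alpha$ and $\alpha_i\cup\beta_i$ is a simple closed curve. The exponent-sum homomorphism factors through $H_1(D_n)\cong\Z^n$, sending each puncture's meridian to $1$. A simple closed curve bounding a disk containing $k$ punctures is homologous to $\pm$ the sum of the $k$ meridians. The sign is $+$ or $-$ according to whether the curve runs in the positive or negative sense around $\Delta_i$. Hence the exponent sum of $\gamma_i$ is $\pm k$. The orientation of $\gamma_i$ is that of $\beta_i$, since $\alpha_i$ is traversed backwards to close up. So the sign is exactly the one used in defining $W_i$, giving $W_i=\pm(k_{i+1}-k_i)$.

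The main obstacle is the bookkeeping of signs and conventions rather than any real topology. Three conventions must be matched so that the sign comes out as $+$ rather than $-$:
\begin{itemize}
\item whether $t$ acts as the deck transformation corresponding to a clockwise or a counterclockwise meridian;
\item the convention that $\Moody(\alpha,\beta)=\sum (t^\ell\tilde\alpha,\tilde\beta)t^\ell$ pairs the translate of $\tilde\alpha$ rather than of $\tilde\beta$, which flips the sign of $k$ relative to the deck displacement of $\tilde\beta$;
\item the clockwise convention for $W_i$.
\end{itemize}
I would fix these by checking the example of Figure~\ref{figure:cp-examples}(a). There $W_1=1$ and the polynomial is $1-t$, so $k_1=0$, $k_2=1$ and $k_2-k_1=1=W_1$. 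This calibrates the identification of $t$ with the clockwise meridian, compatibly with the decks of Figure~\ref{figure:cover}. Once calibrated on one example, the general argument above gives the same sign for every $i$, since both sides are additive under the homology computation. Finally, I would note that the endpoints of $\alpha$ being punctures causes no issue: $\alpha_i$ is a subarc between interior intersection points, so it avoids the punctures.
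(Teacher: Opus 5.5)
Your argument is correct. The paper gives no proof of this lemma: it quotes it from a remark in Bigelow's paper and uses it only through Corollary~\ref{corollary:exponentsequal}. So your lifting computation supplies what the paper leaves to the citation, and it is the natural argument. Lifts of the subarc $\alpha_i$ stay on a single translate of $\tilde\alpha$, so the lift of $\beta_i\cdot\overline{\alpha_i}$ starting at $\tilde q_i$ ends at $t^{k_{i+1}-k_i}\tilde q_i$. Meanwhile the monodromy of $\partial\Delta_i$ is $\pm$ the number of punctures in $\Delta_i$, by the homology relation among meridians.

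Two remarks make the argument fully explicit.

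First, you can evaluate the monodromy on a loop based at $q_i$ rather than at $p_*$, independently of the chosen lift $\tilde q_i$, because the cover is regular with abelian deck group. That is what your phrase ``by definition of the cover'' is hiding.

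Second, your second bulleted convention introduces no extra sign. Your first paragraph already fixes $\tilde\beta$ and tracks the translates $t^{k}\cdot\tilde\alpha$, exactly as in the definition of $k_i$. The only genuine ambiguities are which generator of the deck group is called $t$ (equivalently, the orientation of the $x_j$) and the clockwise convention for $W_i$.

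The paper never specifies the former, so the sign in the lemma is really a normalization. Calibrating it against Figure~\ref{figure:cp-examples}(a), using the paper's stated normalization $k_1=0$, is the honest way to fix it. As a consistency check, Figure~\ref{figure:cp-examples}(b) confirms the same sign. There $W=1,1,-1,-2,1$ gives exponents $0,1,2,1,-1,0$, matching $2-2t+t^2-t^{-1}$. The opposite sign would give $0,-1,-2,-1,1,0$, which cannot produce the $t^2$ term.
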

 
We record here the following useful corollary of Lemma~\ref{lemma:observation:planarpoly}, also observed by Bigelow:
\begin{corollary}
\label{corollary:exponentsequal}
Let $W_1, \ldots, W_m$ be a winding number sequence for a braid in $B_n$, and let $k_i$ and $k_j$ be the exponents of the Moody monomials corresponding to two points $q_i$ and $q_j$ in $\alpha \cap \beta$.  Then $k_i = k_j$  if and only if 
$\sum_{k=i}^{j-1} W_k = 0$.
\end{corollary}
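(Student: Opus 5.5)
The corollary should follow from Lemma~\ref{lemma:observation:planarpoly} by summing its identity over consecutive indices. The sum telescopes. We may assume $i \le j$; the case $j < i$ is symmetric after swapping the roles of $i$ and $j$, with the sum interpreted as $\sum_{k=j}^{i-1} W_k$, whose vanishing is the same condition up to sign. If $i=j$, the sum is empty and both sides hold trivially.

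For $i<j$, I apply Lemma~\ref{lemma:observation:planarpoly} to each consecutive pair $q_k, q_{k+1}$ with $k=i,\ldots,j-1$. Each such pair is consecutive along $\beta$ by construction of the labeling. This gives $W_k = k_{k+1}-k_k$ for each of these $k$. Summing, the intermediate exponents cancel pairwise, leaving
\[
\sum_{k=i}^{j-1} W_k \;=\; \sum_{k=i}^{j-1} (k_{k+1}-k_k) \;=\; k_j - k_i .
\]
Hence $k_i=k_j$ if and only if the sum vanishes, which proves both directions at once.

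The only point needing care is that the exponents $k_i$ depend on the choice of lifts $\tilde\alpha,\tilde\beta$. Changing the lifts multiplies $\Moody(\alpha,\beta)$ by a power of $t$, so it shifts every $k_i$ by the same constant. Differences $k_j-k_i$ are therefore well defined, and so is the condition $k_i=k_j$; likewise the $W_k$ depend only on the disks $\Delta_k$. I also need the indexing to match: the winding sequence has length one less than the number of intersection points, so $W_k$ is defined exactly for $i\le k\le j-1\le m-1$.

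There is no real obstacle. The substantive content sits entirely in Lemma~\ref{lemma:observation:planarpoly}, which we are allowed to assume.
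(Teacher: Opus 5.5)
Your proposal is correct and matches the paper's approach: the paper states this as an immediate corollary of Lemma~\ref{lemma:observation:planarpoly} with no separate proof, and telescoping $W_k = k_{k+1}-k_k$ over $k=i,\dots,j-1$ to get $\sum_{k=i}^{j-1} W_k = k_j - k_i$ is exactly the intended deduction. Your added remarks are also correct: differences of exponents do not depend on the choice of lifts, and the winding sequence has one fewer term than there are intersection points, so its indexing is consistent.
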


Equipped with this useful criterion for tracking repeats of an exponent occurring in the Moody polynomial, we will proceed in the next section to consider the question of faithfulness of $\rho_n$ when $n=3$.

%%%%%%%%%%%%%%
%%%%%%%%%%%%%%
\section{Faithfulness for three strands} 
\label{section:faithfulness for n=3}

The following theorem was first proved by Magnus and Peluso in 1969 using purely algebraic methods.  We give a new proof. 

\begin{theorem}
[Magnus--Peluso \cite{magnuspeluso}]
\label{theorem:Burau3}
The Burau representation $\rho_3$ is faithful.
\end{theorem}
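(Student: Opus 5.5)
The plan is to apply Theorem~\ref{theorem:moody0} with $n=3$: it suffices to show that every oriented arc $\beta$ from $p_*$ to $p_3$ with $\iota(\alpha,\beta)>0$ has $\Moody(\alpha,\beta)\neq 0$. First isotope $\beta$ (rel endpoints, in $D_3$) into minimal position with respect to $\alpha$, so that $m=\iota(\alpha,\beta)\geq 1$. Write the unsimplified Moody polynomial $\sum_{i=1}^m \epsilon_i t^{k_i}$. The goal is to prove a \emph{parity condition}: the quantity $\epsilon_i(-1)^{k_i}$ is independent of $i$. Granting this, any two monomials with the same exponent $k_i=k_j$ carry the same sign, so no cancellation occurs on simplifying. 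Hence $\Moody(\alpha,\beta)$ has at least one nonzero coefficient (indeed the sum of the absolute values of its coefficients is $m$), and Theorem~\ref{theorem:moody0} gives faithfulness.

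By Lemma~\ref{lemma:observation:planarpoly}, $k_{i+1}-k_i=W_i$. So the parity condition is equivalent to the local statement that for every $i$, $\epsilon_{i+1}=-\epsilon_i$ if and only if $W_i$ is odd. I would prove this by examining each subarc $\beta_i$ and its disk $\Delta_i$. The sign comparison is purely local.
\begin{itemize}
\item Suppose $\beta_i$ leaves $q_i$ on one side of $\alpha$ and returns to $q_{i+1}$ from the same side without passing around an endpoint of $\alpha$. Then $\beta$ crosses $\alpha$ in opposite vertical directions at $q_i$ and $q_{i+1}$, so the signs flip.
\item Suppose $\beta_i$ travels around $p_1$ or around $p_2$, leaving from one side and arriving from the other. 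Then the signs agree.
\end{itemize}
It therefore remains to compute the parity of $|W_i|$, the number of punctures enclosed by $\Delta_i$, in these two cases. This is where $n=3$ is used: the only marked point off $\alpha$ is $p_3$.

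In the non-wrapping case, $\Delta_i$ lies on one side of $\alpha$ and cannot contain $p_1$ or $p_2$ in its interior. Minimal position rules out bigons, so $\Delta_i$ must contain a puncture, and therefore contains exactly $p_3$. Thus $|W_i|=1$ is odd, consistent with a sign flip. In the wrapping case (say around $p_1$), $\Delta_i$ contains $p_1$ and possibly $p_3$. It cannot also contain $p_2$: $\beta_i$ meets $\alpha$ only at its endpoints, so it cannot go around both ends of $\alpha$. The step I expect to be the main obstacle is showing that in this case $\Delta_i$ must contain $p_3$ as well, giving $|W_i|=2$, even. This requires two things:
\begin{itemize}
\item pinning down exactly how the marked endpoint $p_1$ of $\alpha$ is counted in $\Delta_i$, consistently with the conventions behind Lemma~\ref{lemma:observation:planarpoly} (computing the deck shift directly in the cover is a good check);
\item showing that a subarc $\beta_i$ encircling only $p_1$ can be removed by an isotopy of $\beta$ in $D_3$ fixing endpoints, contradicting minimal position.
\end{itemize}
For the second point, I would argue that such a $\beta_i$ cobounds, together with a small loop around $p_1$, a region containing no punctures. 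In $D_3$ with $p_1$ a puncture this lets us slide $\beta$ across $p_1$'s side of $\alpha$ and reduce $\iota(\alpha,\beta)$. I would double-check this against the examples in Figure~\ref{figure:cp-examples}, and check separately the cases where $\beta_i$ is near the start or end of $\beta$.

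Once these two local lemmas are in place, induction along $\beta$ gives $\epsilon_i(-1)^{k_i}=\epsilon_1(-1)^{k_1}$ for all $i$, which completes the argument.
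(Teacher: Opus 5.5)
Your overall strategy is the paper's. You show that a disk is sign-changing exactly when it contains an odd number of marked points, then use $k_{i+1}-k_i=W_i$ to rule out cancellation; your $\epsilon_i(-1)^{k_i}$ formulation is an equivalent repackaging. However, your local case analysis is incomplete, and the step you flag as the main obstacle is not handled correctly.

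\emph{The missing case.} You assert that $\beta_i$ cannot go around both ends of $\alpha$ because it meets $\alpha$ only at its endpoints, and this is false. For instance, $\beta_i$ can leave $q_i$ upward, pass to the right of $p_2$, run underneath all of $\alpha$, pass to the left of $p_1$, and come down onto a point $q_{i+1}$ to the left of $q_i$, touching $\alpha$ nowhere else. Such a disk is the paper's third type, with $|W_i|=3$, and neither of your two bullet cases covers it. Here $\beta$ crosses $\alpha$ upward at $q_i$ and downward at $q_{i+1}$, so the signs flip, and $\Delta_i\supseteq\{p_1,p_2\}$. Keeping the parity rule therefore requires $p_3\in\Delta_i$, i.e.\ you must also exclude $\mcP(\Delta_i)=\{p_1,p_2\}$.

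\emph{The exclusion step.} Your proposed mechanism for excluding $\mcP(\Delta_i)=\{p_1\}$ (and likewise $\{p_2\}$ and $\{p_1,p_2\}$) does not work. The region between $\beta_i$ and a small loop around $p_1$ is an annulus crossed by the piece of $\alpha$ running from $p_1$ to $\alpha_i$. The arc $\beta_i$ cannot be slid across the puncture $p_1$, so no isotopy of $\beta_i$ alone removes $q_i$ and $q_{i+1}$. Locally there is no bigon at all; the contradiction has to come from the rest of $\beta$.

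Here is how it goes. At the endpoint of $\alpha_i$ nearer to $p_1$, the disk $\Delta_i$ occupies a $270^\circ$ corner. By transversality, the adjacent piece of $\beta\setminus\beta_i$ (the part before $q_i$ or after $q_{i+1}$) therefore starts inside $\Delta_i$. Neither $p_*$ nor $p_3$ lies in $\Delta_i$, and $\beta$ is simple, so this piece must leave $\Delta_i$ through $\alpha_i$.

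Now cut $\Delta_i$ along the slit of $\alpha$ running from $\alpha_i$ to $p_1$. The result is a puncture-free disk in which $\beta$ gives disjoint chords, all with endpoints on the $\alpha$-side of the boundary. An innermost chord not surrounding $p_1$ cobounds a bigon with $\alpha$. The chords surrounding $p_1$ are nested, so following $\beta$ away from the trapped corner, each successive crossing with the slit lies strictly closer to $p_1$. Since there are only finitely many crossings, this is impossible.

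This is the argument behind Lemma~\ref{lemma:characterizing sign-changing}, applied with two slits in the $\{p_1,p_2\}$ case. With it, your three-way split (no wrapping, around one end, around both ends) yields exactly the disks $\{p_3\}$, then $\{p_1,p_3\}$ or $\{p_2,p_3\}$, then $\{p_1,p_2,p_3\}$, and the rest of your argument goes through.
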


\begin{proof}Using the same notation as in the previous section, we let $\Phi \in \B_3$ be an element such that the geometric intersection of $\alpha$ and $\beta = (\beta_*^3)\Phi$ is equal to $m > 0$. By Theorem~\ref{theorem:moody0}, it suffices to show that any single term of the simplified Moody polynomial $\Moody_{\Phi} = \sum_{i = 1}^m\epsilon_i t^{k_i}$ is nonzero.
If $m = 1$, then the Moody polynomial consists of a single linear term, and hence is nonzero.  Suppose now that $m > 1$, and let $W_1, \ldots, W_{m-1}$ denote the winding number sequence of the braid $\Phi$. Each corresponding disk is bounded by a subarc of $\alpha$ and a subarc of $\beta$ and contains 1, 2, or 3 marked points.  The  assumption that $\alpha$ and $\beta$ are in minimal position then implies that, up to reflection about the coordinate axes and up to homeomorphism of the disk preserving each marked point $p_i$ pointwise and the arc $\alpha$ setwise, each disk in the disk sequence is equivalent to one of the three types of regions illustrated in Figure~\ref{figure:WN-disks for $n=3$}, possibly after applying a symmetry.
\begin{figure}[htpb!] 
    \centering
    \begin{subfigure}{0.29\textwidth}\centering
    \includegraphics[scale=1.7]{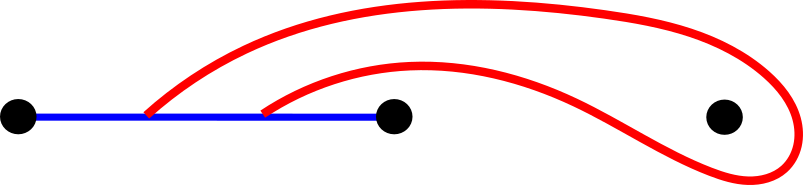}
    \caption{$|W_i|=1$}
    \end{subfigure}
    \begin{subfigure}{0.4\textwidth}\centering
    \includegraphics[scale=1.7]{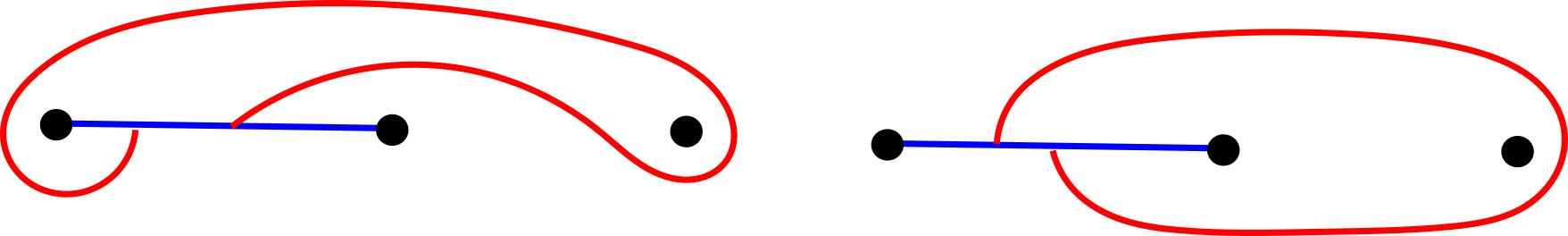}
    \caption{$|W_i|=2$}
    \end{subfigure}
    \begin{subfigure}{0.29\textwidth}\centering
    \includegraphics[scale=1.7]{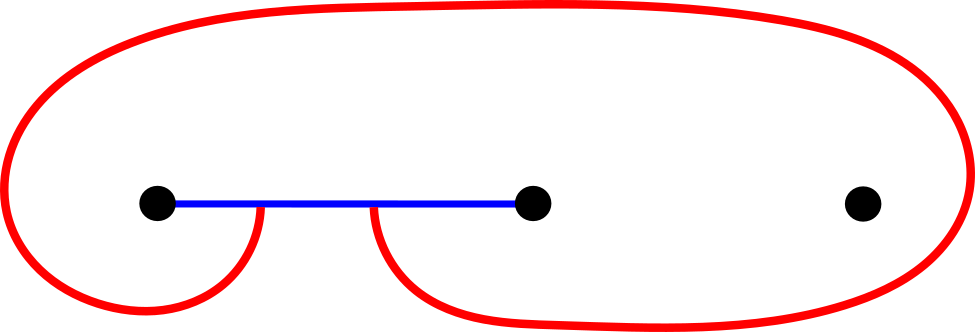}
    \caption{$|W_i|=3$}
    \end{subfigure}
    \caption{Possible disks in the disk sequence of an element of $\B_3$.}
    \label{figure:WN-disks for $n=3$}
\end{figure}

Notice that for each $i$ such that $|W_i|=2$, the sign $\epsilon_i$ of the two points of intersection of $\beta$ with $\alpha$ must be the same, that is, $\epsilon_i = \epsilon_{i+1}$.  Furthermore, if $|W_i|\in \{1,3\}$, then we have that $\epsilon_i = -\epsilon_{i+1}$. 

Suppose now that $k_i = k_j$ for two distinct terms in the unsimplified Moody polynomial.  Then $\sum_{\ell=i}^{j-1} W_\ell=0$ by Corollary~\ref{corollary:exponentsequal}.   Hence for $i\leq \ell \leq j-1$ we must have an even number of terms $W_\ell$ with $|W_\ell|\in \{1,3\}$. By our observations in the previous paragraph, it follows that $\epsilon_i = \epsilon_j$.  In other words, no cancellation among terms of degree $k_i$ can occur in the summation above. It follows that the Moody polynomial of $\Phi$ is not equal to zero, and therefore the Burau representation of $B_3$ is faithful.
\end{proof}

\p{The parity condition.}  We note that the only information that we used in the proof of Theorem~\ref{theorem:Burau3} was the fact that disks containing an odd number of punctures correspond to a change of sign of corresponding coefficients in the Moody polynomial, while disks containing an even number of punctures preserve the sign of the corresponding coefficients.  We will record this key observation as a lemma. 

To make this precise, let $q_i$ be a point of intersection of the oriented arcs $\alpha$ and $\beta$.  As in Section~\ref{section:Moody and CP}, we let $\epsilon_i$ denote the algebraic intersection number $\hat{i}(\beta, \alpha)$ at $q_i$.  If $\Delta$ is a disk in the sequence for $\Phi$ corresponding to the pair of points $q_i, q_{i+1} \in \alpha \cap \beta \in \partial \Delta$, then we will say that $\Delta$ is {\it sign-changing} if $\epsilon_i = - \epsilon_{i+1}$ and {\it sign-preserving} otherwise. Let $\mcP(\Delta)$ denote the set of marked points in $D_n$ that are contained in the interior of $\Delta$.  We will say that a braid $\Phi$ satisfies the {\it parity condition} if a disk $\Delta$ in its disk sequence is sign-changing if and only if $|\mcP(\Delta)|$ is odd.  

In other words, our proof of Theorem~\ref{theorem:Burau3} reduces to showing that every 3-strand braid satisfies the parity condition. Our next lemma shows that the parity condition gives a sufficient criterion for a braid $\Phi \in \B_n$ to not be contained in the kernel of the Burau representation $\rho_n$. 

Recall from Section~\ref{section:Moody and CP} that the Moody polynomial of a braid $\phi$ is given by 
    \begin{align*}
        \Moody_{\phi} = \sum_{i = 1}^{m}\epsilon_i t^{k_i}
    \end{align*}
    where each $\epsilon_it^{k_i}$ is the monomial associated to the $i$-th crossing of $\alpha$ and $\beta=(\beta_*^3)\phi$, and where $m = \iota(\alpha, \beta)$. We say that the Moody polynomial of $\phi$ {\it admits no cancellations} if there is no pair of indices $i , j$ such that $k_i = k_j$ and $\epsilon_i = -\epsilon_j$ in the above expression.   

\begin{lemma}\label{lemma:no-cancellations}
    Let $\phi\in B_n$.  If $\phi$ satisfies the parity condition, then the Moody polynomial of $\phi$ admits no cancellations. 
\end{lemma}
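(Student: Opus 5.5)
The plan is to abstract the argument given in the proof of Theorem~\ref{theorem:Burau3}. Suppose $\phi$ satisfies the parity condition. Let $m = \iota(\alpha, \beta)$ with $\beta = (\beta_*^3)\phi$, and consider two indices $i < j$ with $k_i = k_j$. We must show that $\epsilon_i = \epsilon_j$. The cases $m \le 1$ are vacuous, so we may assume $m \geq 2$.

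The first step is to use Corollary~\ref{corollary:exponentsequal}: $k_i = k_j$ gives $\sum_{\ell=i}^{j-1} W_\ell = 0$. By definition $W_\ell = \pm |\mcP(\Delta_\ell)|$, so $W_\ell \equiv |\mcP(\Delta_\ell)| \pmod 2$. Reducing the vanishing sum mod $2$ shows that the number of indices $\ell \in \{i, \ldots, j-1\}$ with $|\mcP(\Delta_\ell)|$ odd is even.

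The second step uses the parity condition. The disk $\Delta_\ell$ is sign-changing, meaning $\epsilon_{\ell+1} = -\epsilon_\ell$, exactly when $|\mcP(\Delta_\ell)|$ is odd; otherwise $\epsilon_{\ell+1} = \epsilon_\ell$. Telescoping along the chain gives
\[
\epsilon_j = \epsilon_i \prod_{\ell=i}^{j-1} \frac{\epsilon_{\ell+1}}{\epsilon_\ell} = \epsilon_i \cdot (-1)^{\#\{\ell \,:\, i \le \ell \le j-1,\ |\mcP(\Delta_\ell)| \text{ odd}\}} = \epsilon_i.
\]
Hence no pair with $k_i = k_j$ and $\epsilon_i = -\epsilon_j$ exists, and the Moody polynomial admits no cancellations.

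The only point needing care is the bookkeeping. The disk $\Delta_\ell$ is exactly the one with corners $q_\ell$ and $q_{\ell+1}$, consecutive along $\beta$, so the definition of sign-changing matches the factor $\epsilon_{\ell+1}/\epsilon_\ell$ in the telescoping product. Also, $|W_\ell|$ counts the marked points in the interior of $\Delta_\ell$, which is the quantity $|\mcP(\Delta_\ell)|$. Beyond checking these conventions, I expect no real obstacle.
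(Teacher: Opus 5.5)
Your proof is correct and follows the same route as the paper. Corollary~\ref{corollary:exponentsequal} makes the winding numbers from $q_i$ to $q_j$ sum to zero, so an even number of the disks $\Delta_i,\dots,\Delta_{j-1}$ have odd $|\mcP(\Delta_\ell)|$; the parity condition then yields an even number of sign changes, giving $\epsilon_i=\epsilon_j$. Reducing the signed sum $\sum W_\ell$ modulo $2$ is, if anything, a slightly more careful version of the step the paper writes as the unsigned sum $|\mcP(\Delta_i)|+\dots+|\mcP(\Delta_{j-1})|=0$.
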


\begin{proof}
    By Corollary \ref{corollary:exponentsequal}, if $k_i=k_j$ then $|\mcP(\Delta_i)|+\dots+|\mcP(\Delta_{j-1})|=0$. In particular this sum is even. However, by the assumption that a given disk in the disk sequence is sign-changing if and only if it contains an odd number of punctures, this means there are an even number of sign-changing disks in the subsequence $\Delta_i,\dots,\Delta_{j-1}$. In particular we have $\epsilon_i=\epsilon_j$ and so this completes the proof.
\end{proof} 

In general, $n$-strand braids do not satisfy the parity condition when $n \geq 4$.  Our strategy for proving the Main Theorem will be to identify certain 4-braids satisfying the parity condition.  

\section{Push-maps and minimal position}  
\label{section:minimal position}

As discussed in Section~\ref{section:intro}, in order to prove our Main Theorem, it suffices to establish that $\rho_4$ is faithful when restricted to the subgroup of $B_4$ consisting of point-pushing maps, also known simply as push-maps.   In light of Lemma~\ref{lemma:no-cancellations}, our strategy will be to identify elements of this subgroup satisfying the parity condition. In this section, as a first step in that direction, we will characterize configurations of the various arcs and curves involved that lead to the creation of bigons between $\alpha$ and the image of $\beta_*^3$ under the action of push-maps. 

For the purpose of defining push-maps, it is convenient to use the notation $\Mod(D_n)$ to denote the braid group $\B_n$.  Let $p \in D_n$ be one of the marked points in $D_n$, and let $\Mod(D_n, p)$ denote the subgroup of $\Mod(D_n)$ that fixes $p$.   There is a forgetful map $\Mod(D_n, p) \to \Mod(D_{n-1})$, and the Birman exact sequence for $D_n$ identifies the kernel of this map with 
$\pi_1(D_{n-1},p)$:
\begin{align*}
1\xrightarrow{} \pi_1(D_{n-1}, p) \xrightarrow{Push} \Mod(D_n, p) \xrightarrow{Forget} \Mod(D_{n-1}) \xrightarrow{} 1
\end{align*}
Given a loop $\Gamma$ in $\pi_1(D_{n-1},p)$, we can consider the {\it push-map} obtained by pushing the point $p$ along $\Gamma$, and the image of $\pi_1(D_{n-1},p)$ in $\Mod(D_n, p) \subset \Mod(D_n)$ is an example of a {\it point-pushing subgroup}.  We refer the reader to Farb-Margalit's book for a detailed discussion of point-pushing maps and subgroups~\cite[Section 4.2.2]{primer} .   

Now, let $K_i \cong \pi_1(D_{n-1}, p_i)$ denote the point-pushing subgroup of $\B_n$ corresponding to the marked point $p_i$ in the disk $D_n$; see Figure~\ref{figure:cp-example} for an example of an element of the point-pushing group $K_4$ in $\B_4$ and its effect on the arc $\beta_*^3$.  We remind the reader that throughout the paper, our figures show the arc $\alpha$ in blue and arcs of the form $\beta = \Phi(\beta_*^3)$ in red.  Henceforth we will also show any loops representing push-maps in gold.  

\begin{figure}[htpb!]
        \centering
        \begin{overpic}[width=\linewidth]{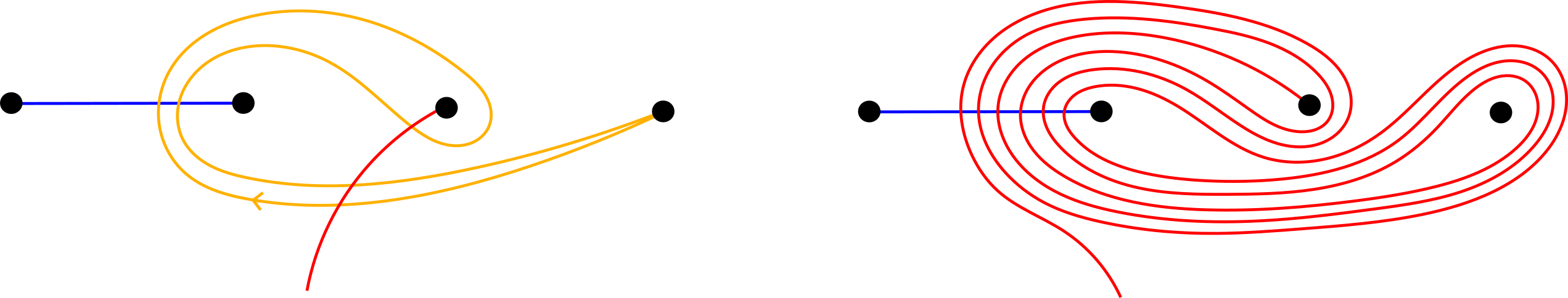}
            \put(5, 13.5){\textcolor{blue}{$\alpha$}}
            \put(17, 1){\textcolor{red}{$\beta_*^3$}}

            \put(58, 13){\textcolor{blue}{$\alpha$}}
            \put(68, 1){\textcolor{red}{$\beta$}}
        \end{overpic}
        \caption{The gold loop represents an element of the subgroup $K_4$. The image $\beta$ of $\beta_*^3$ under the push-map around the gold loop is shown in red on the right. }
        \label{figure:cp-example}
    \end{figure}
    
  In this part of the paper, we do not need to restrict to the case $n = 4$, and so we will work in the disk $D_n$, the braid group $\B_n$, and its subgroup $K_n$ for $n \geq 4$.  Before we begin, we note the following.
\begin{itemize}
\item Without loss of generality, we can always choose a representative of a loop $\Gamma$ that intersects itself minimally and transversely, and hence there are finitely many points of self-intersection.  
\item The results in this section hold if we replace $K_n$ with any other point-pushing subgroup $K_j$ where $j \geq 4$.
\item In this section we do not assume that our push-maps necessarily correspond to simple loops.  However, we only require simple loops in order to prove Theorem~\ref{theorem:k4-faithful}  below.  
\end{itemize}

\paragraph{Bigon-forming polygons.}  Let $\Gamma \in K_n$, and let $\beta$ be any simple arc joining $p_*$ to $p_3$.  We wish to identify local configurations of subarcs of $\alpha, \beta$, and $\Gamma$ that lead to the creation of a bigon between $(\beta) \Gamma$ and $\alpha$.  As a warm-up, suppose that as we travel along $\Gamma$ there is a subarc of $\beta$ that precedes a bigon between $\Gamma$ and $\alpha$. Then pushing along $\Gamma$ will create a bigon (in fact, two); see Figure~\ref{figure:y-alpha bigon}.  
 \begin{figure}[htpb!]
        \centering
        \begin{subfigure}{.45\linewidth}\centering
                    \includegraphics[width=0.9\linewidth]{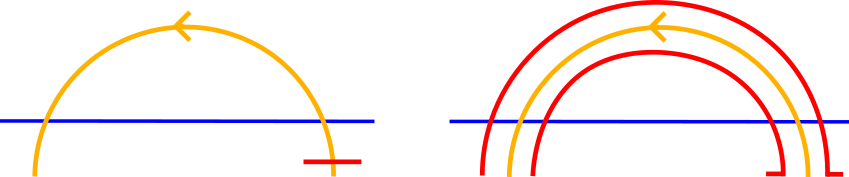}
        \caption{A bigon between $\Gamma$ (gold) and $\alpha$ (blue).  }
        \label{figure:y-alpha bigon}
        \end{subfigure}
        \begin{subfigure}{.45\linewidth}\centering
        \begin{overpic}[width=0.9\linewidth]{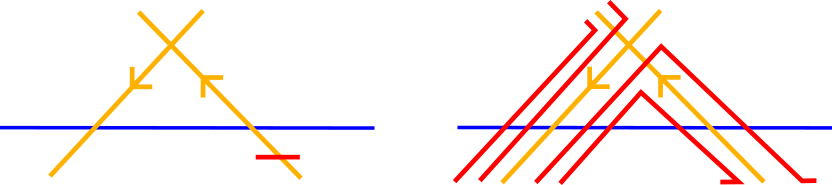}
            \put(40, 2){\textcolor{blue}{$\alpha$}}
            \put(6, 10){\textcolor[RGB]{255,177,0}{$\Gamma^{(2)}$}}
            \put(30, 10){\textcolor[RGB]{255,177,0}{$\Gamma^{(1)}$}}
        \end{overpic}
        \caption{A generalized bigon of length 2 between $\Gamma$ and $\alpha$.}
                \label{figure:generalized bigon}
        \end{subfigure}
        \label{figure:bigons and generalized bigons}
        \caption{}
    \end{figure}
Of course, in general we can avoid such a scenario simply by assuming that all curves are in pairwise minimal position. However, this points us to a more general kind of phenomenon in the case where $\Gamma$ is not simple: a sequence of subarcs of $\Gamma$ that, taken together, effectively form a bigon with $\Gamma$, as shown in Figure~\ref{figure:generalized bigon}.  Then, depending on the ordering of these subarcs (as determined by the orientation of $\Gamma$), pushing along $\Gamma$ can similarly create a bigon. 

To make this more precise, given two subarcs $\Gamma^{(1)}, \Gamma^{(2)}$ of $\Gamma$ in $D_n$ with disjoint interiors, we will say that $\Gamma^{(1)}$ {\it precedes} $\Gamma^{(2)}$ and write $\Gamma^{(1)}< \Gamma^{(2)}$ if the initial point of $\Gamma^{(1)}$ precedes the initial point of $\Gamma^{(2)}$ as we trace out $\Gamma$.  We will further refer to a union of subarcs $\Gamma^{(1)}, \ldots,  \Gamma^{(\ell)}$ that are pairwise disjoint as a {\it piecewise $\Gamma$-arc of length $\ell$} if the following conditions are satisfied:
\begin{itemize}
    \item for each $i \in \{1, \ldots, \ell - 1 \}$, the final point of $\Gamma^{(i)}$ is a self-intersection point of $\Gamma$ in $D_n$, such that the final point of $\Gamma^{(i)}$ is the initial point of $\Gamma^{(i+1)}$;
    \item $\Gamma^{(1)} < \Gamma^{(2)} < \cdots < \Gamma^{(\ell)}$; and 
    \item the union $\Gamma^{(1)} \cup \cdots \cup \Gamma^{(\ell)}$ in $D_n$ contains no loops.  
\end{itemize}

We will now identify three types of local configurations (in addition to bigons) between $\alpha$, $\beta$, and $\Gamma$ that lead to the creation of bigons between $\alpha$ and $(\beta) \Gamma$.  
\begin{enumerate}
    \item{{\bf Generalized bigons.}}  
    Consider now a piecewise $\Gamma$-arc $\gamma$ of length $\ell$ where the initial point of $\Gamma^{(1)}$ and the final point of $\Gamma^{(\ell)}$ are both points of intersection of $\Gamma$ with $\alpha$, so that $\gamma$ together with the $\alpha$-subarc with these two endpoints forms a  $(\ell+1)$-gon in $D_n$ containing no marked points in its interior.  We will refer to such an $(\ell+1)$-gon as a {\it generalized bigon}.   Figure~\ref{figure:generalized bigon} shows a generalized bigon with $\ell = 2$.   If the piecewise $\Gamma$-arc in a generalized bigon is preceded by a point of intersection with $\beta$, then $(\beta) \Gamma$ and $\alpha$ will form a bigon in the same way as if the piecewise $\Gamma$-arc were replaced by a single subarc of $\Gamma$.  We emphasize that the orientation and ordering of the $\Gamma$-subarcs as we traverse $\Gamma$ is crucial here, since otherwise $\Gamma$ would not push the $\beta$-subarc all the way around to form a bigon between $(\beta) \Gamma$ and $\alpha$.  Finally, a ``standard'' bigon is a generalized bigon with a piecewise $\Gamma$-arc $\gamma$ of length 1.
    
    \item{{\bf Generalized trigons.}}  
    Figure~\ref{figure:b-to-a trigon} shows another configuration of curves that leads to the creation of a bigon, namely a trigon in $D_4$ whose boundary consists of a subarc from each of the arcs $\alpha$ and $\beta,$ and the loop $\Gamma$, where the $\Gamma$-subarc is oriented from the $\beta$-arc to the $\alpha$-arc.  We will refer to such a configuration of curves as a {\it $\beta$-to-$\alpha$ trigon}.  The orientation of the $\Gamma$-subarc is crucial here: if its orientation were reversed, then $(\beta) \Gamma$ does not form any bigons with $\alpha$ in this region.  
As with bigons, we can replace the single subarc of $\Gamma$ here with a piecewise $\Gamma$-arc of any length; we refer to any such configuration as a {\it generalized $\beta$-to-$\alpha$ trigon}.  See Figure~\ref{figure:generalized trigon} for an example.
     \begin{figure}[htpb!]
        \centering
        \begin{subfigure}{.45\linewidth}\centering
        \begin{overpic}[width=0.7\linewidth]{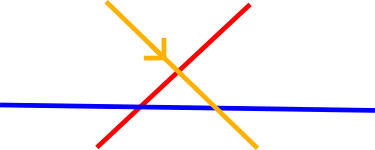}
            \put(65, 30){\textcolor{red}{$\beta$}}
            \put(25, 30){\textcolor[RGB]{255,177,0}{$\Gamma$}}
            \put(5, 16){\textcolor{blue}{$\alpha$}}
        \end{overpic}
        \caption{A $\beta$-to-$\alpha$ trigon.}
        \label{figure:b-to-a trigon}
        \end{subfigure}
        \begin{subfigure}{.5\linewidth}\centering
                    \begin{overpic}[width=0.87\linewidth]{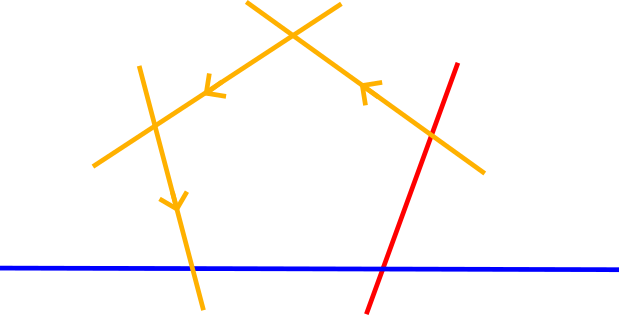}
                        \put(67, 14){\textcolor{red}{$\beta$}}
                        \put(55, 42){\textcolor[RGB]{255,177,0}{$\Gamma^{(1)}$}}
                        \put(30, 41){\textcolor[RGB]{255,177,0}{$\Gamma^{(2)}$}}
                        \put(18, 14){\textcolor[RGB]{255,177,0}{$\Gamma^{(3)}$}}
                        \put(4, 10){\textcolor{blue}{$\alpha$}}
                    \end{overpic}
                \caption{A five-sided example of a generalized $\beta$-to-$\alpha$ trigon.}
                \label{figure:generalized trigon}
        \end{subfigure}
        \caption{}
    \end{figure}

\item{{\bf Generalized rectangles.}} 
Another configuration of curves that may create bigons between $\alpha$ and $(\beta) \Gamma$ arises when two parallel subarcs of $\Gamma$ traveling between $\beta$ and $\alpha$ form a rectangle containing no marked points; we will call this a $\beta$-to-$\alpha$ rectangle.  As with bigons and trigons, we can use two  piecewise $\Gamma$-arcs instead of two $\Gamma$-arcs, and we refer to any such configuration as a {\it generalized rectangle}; see Figure~\ref{figure:rectangle} for an example.  We emphasize that in the case of a generalized rectangle, we do not necessarily require the two piecewise $\Gamma$-arcs to be disjoint.  In other words, we allow ``degenerate'' generalized rectangles, in which some number of the $\Gamma$-segments involved can play a role in both piecewise $\Gamma$-arcs; we will see examples of this in the proof of the next lemma (see Figure~\ref{figure:One point case 2 region 2}).  Note also that every generalized rectangle includes two crossings of $\Gamma$ and $\alpha$ with the same sign that are adjacent, as the crossings are ordered along $\alpha$. 

\begin{figure}[htpb!]
    \centering
     \begin{subfigure}{.35\linewidth}\centering
    \begin{overpic}[width=0.7\linewidth]{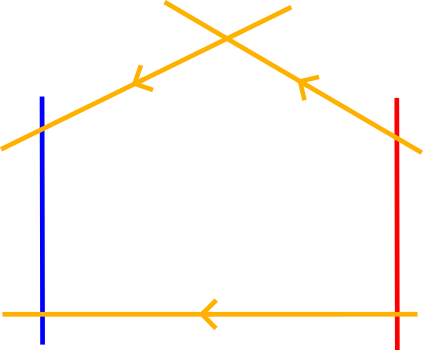}
        \put(38,13){\textcolor[RGB]{255,177,0}{$\Gamma$}}
        \put(2, 28){\textcolor{blue}{$\alpha$}}
        \put(98,28){\textcolor{red}{$\beta$}}
    \end{overpic}
    \caption{An example of a generalized rectangle with piecewise $\Gamma$-arcs of length 1 and length 2.}
    \label{figure:rectangle}
    \end{subfigure}
    \hskip.1in
     \begin{subfigure}{.45\linewidth}\centering
        \begin{overpic}[width=0.7\linewidth]{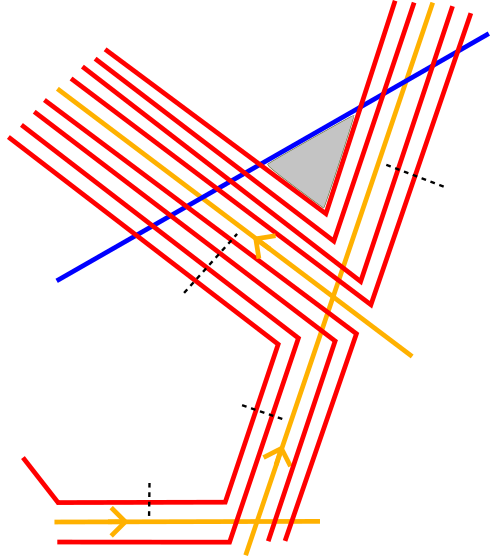}
            \put(11, 12){\textcolor{red}{$\beta$}}
            \put(13, 45){\textcolor{blue}{$\alpha$}}
            \put(25, 15){$\Gamma^{(1)}$}
            \put(32, 26){$\Gamma^{(2)}$}
            \put(26, 40){$\Gamma^{(3)}$}
            \put(82, 64){$\Gamma^{(4)}$}
        \end{overpic}
        \caption{A degenerate generalized rectangle formed by \newline $\mbox{\qquad} \Gamma^{(1)}\cup\Gamma^{(2)}\cup\Gamma^{(3)}$ and $\Gamma^{(1)}\cup\Gamma^{(2)}\cup\Gamma^{(4)}$.}  
        \label{figure:One point case 2 region 2}
        \end{subfigure}
        \caption{}
\end{figure}

\item {\bf Generalized bigons/trigons/rectangles with basepoint.}  Let $\Gamma_I$ denote the subarc of $\Gamma$ from $p_4$ to its first point of intersection with $\alpha$, and similarly let $\Gamma_F$ denote the subarc of $\Gamma$ from its final point of intersection with $\alpha$ back to $p_4$.  We next consider a configuration that would become a generalized polygon of any of the previous three types upon the deletion of the puncture $p_4$.   If such a configuration also has one additional subarc $\gamma$ of $\Gamma$ from a $\beta$-subarc of the generalized polygon to $\alpha$, satisfying the following three properties:
\begin{enumerate}
    \item $\gamma$ is the union of a subarc of $\Gamma_I$ together with a subarc of $\Gamma_F$ (including the basepoint $p_4$);
    \item $\gamma$ does not cross $\alpha$ or $\beta$ except at its endpoints;
    \item$\Gamma_I$ crosses $\alpha$; and
    \item $\Gamma_F$ crosses $\beta$.
\end{enumerate}
Such a configuration also results in a bigon after applying $\Gamma$ to the arc $\beta$; see Figure~\ref{figure:additional generalized rectangle} for an example.

\begin{figure}[htpb!]
    \centering
\begin{overpic}[width=0.3\linewidth]{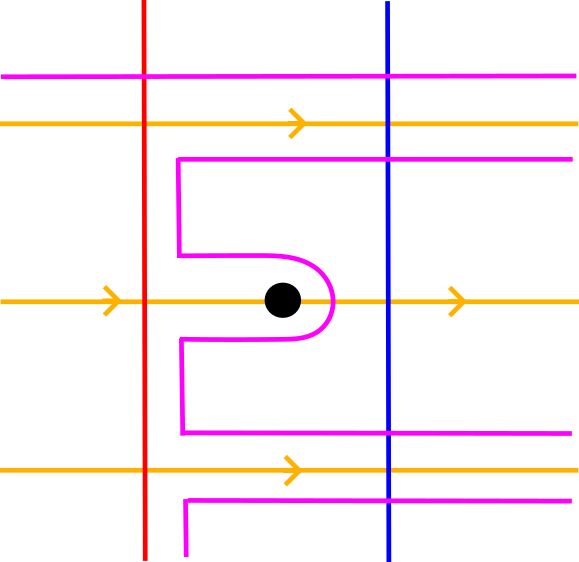}
            \put(17, 95){\textcolor{red}{$\beta$}}
            \put(69, 95){\textcolor{blue}{$\alpha$}}
            \put(-8, 75){\textcolor[RGB]{255,177,0}{$\Gamma$}}
            \put(-10, 45){\textcolor[RGB]{255,177,0}{$\Gamma_F$}}
            \put(103, 44){\textcolor[RGB]{255,177,0}{$\Gamma_I$}}
             \put(35, 47){$p_4$}
             \put(-8, 15){\textcolor[RGB]{255,177,0}{$\Gamma$}}
        \end{overpic}
        \caption{A generalized rectangle with basepoint.}
    \label{figure:additional generalized rectangle}
\end{figure}
\end{enumerate}

In the proof of the next lemma, we will see that generalized bigons, $\beta$-to-$\alpha$ trigons, and rectangles, possibly with basepoint, are the only types of configurations that give rise to bigons between $(\beta) \Gamma$ and $\alpha$.  Hence we will refer to these configurations collectively as {\it bigon-forming polygons}.  

\p{Proper products.}  Let $\Phi \in K_n$, and let $\beta =  (\beta_*^3) \Phi$. 
Our next step will be to establish a set of conditions on a loop $\Gamma$ based at $p_n$ that will imply $(\beta) \Gamma$ and $\alpha$ are in minimal position.  To that end, we let $\Gamma_I$ denote the {\it initial component} of $\Gamma$, that is, the subarc of $\Gamma$ from $p_n$ to the first crossing of $\Gamma$ with $\alpha$.  Similarly, we let $\Gamma_F$ denote the {\it final component} of $\Gamma$, traveling from its final crossing with $\alpha$ back to $p_n$. We also let $\beta'$ denote the $\beta$-component of the innermost disk in the disk sequence of $\Phi$ that contains only $p_n$ if such a disk exists; otherwise we set $\beta' = \emptyset$.  We say that $\Phi \cdot \Gamma$ is a {\it proper product} if we can choose a representative of the loop $\Gamma$ that is in minimal position with respect to $\alpha$ and $\beta$, and satisfying the following two conditions: 
\begin{enumerate}
    \item there are no bigon-forming polygons between $\alpha$, $\beta$, and $\Gamma$; and 
    \item one of the following two conditions holds:
\begin{enumerate}
    \item  $\Gamma_I \cap \beta' \neq \emptyset$; or
    \item  $\Gamma_I \cap \beta' = \emptyset$ and $\Gamma_F \cap \beta' = \emptyset$. 
\end{enumerate}
\end{enumerate}
 The second condition ensures that postcomposing with $\Gamma$ does not ``undo'' the creation (by $\Phi$) of a disk containing only $p_4$. Figure~\ref{figure:proper products} depicts the local picture for each type of configuration: in first case (on the left in Figure~\ref{figure:proper products}), reversing the orientation of $\Gamma$ has the effect of swapping the roles of $\Gamma_I$ and $\Gamma_F$, resulting in a loop that does not form a proper product.  In practice, we will generally be given a push-map $\Phi$, and then seek the second factor $\Gamma$ to form a proper product with $\Phi$.  We will also refer to $\Phi \cdot \Gamma_1 \cdot \Gamma_2$ as a proper product if both $\Phi \cdot \Gamma_1$ and $(\Phi \cdot \Gamma_1) \cdot \Gamma_2$ are proper products.

\begin{figure}[htpb!]
    \centering
    \begin{overpic}[width=0.9\linewidth]{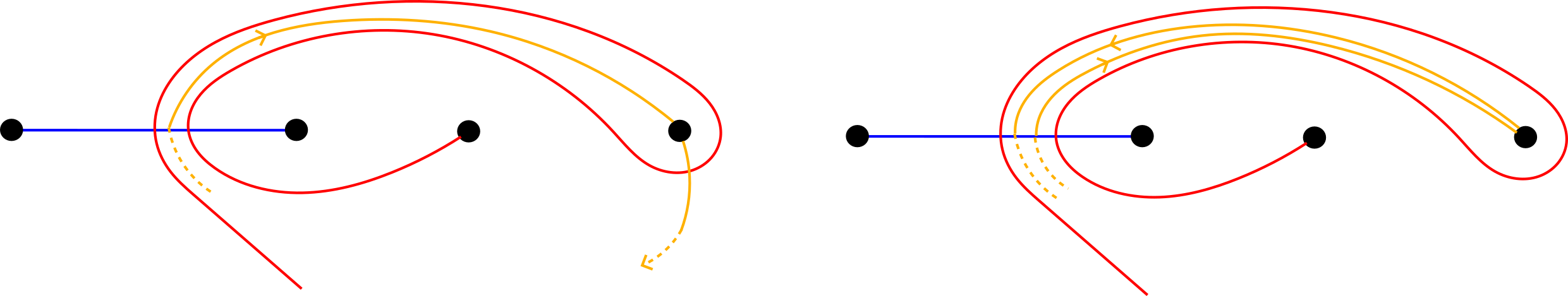}
      \put(40, 18){\textcolor{red}{$\beta'$}}
       \put(93, 18){\textcolor{red}{$\beta'$}}
            \put(44.5, 5){\textcolor[RGB]{255,177,0}{$\Gamma_I$}}
            \put(5, 11.5){\textcolor{blue}{$\alpha$}}
             \put(59, 11){\textcolor{blue}{$\alpha$}}
    \end{overpic}
    \caption{Two examples of the local picture for Condition 2 of a proper product $\Phi \cdot \Gamma$ indicating the initial and final components of $\Gamma$ in each case; reversing the orientation of $\Gamma$ in the left-hand figure gives a non-example.}
    \label{figure:proper products}
\end{figure}

The next lemma gives a straightforward criterion for ensuring that the product of two push-maps does not yield any bigons between $\alpha$ and the image of $\beta_*^3$ under the product.

\begin{proposition}
    \label{prop:ypush_minimalposition}
    Let $\Phi \in K_n$, and let $\beta = \Phi (\beta_*^3)$. If $\Gamma$ is a loop based at $p_n$ such that $\Phi \cdot \Gamma$ is a proper product, then the arcs $(\beta) \Gamma$ and $\alpha$ are in minimal position. 
\end{proposition}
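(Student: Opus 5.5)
We argue by contradiction using the bigon criterion. Both $\alpha$ and $(\beta)\Gamma$ are simple arcs with endpoints in $\{p_1,p_2\}$ and $\{p_*,p_3\}$ respectively, and these endpoint sets are disjoint. So $(\beta)\Gamma$ and $\alpha$ fail to be in minimal position exactly when some subarc of $(\beta)\Gamma$ and some subarc of $\alpha$ together bound an embedded bigon containing no marked points. We fix representatives of $\beta$ and $\Gamma$ in minimal position with respect to $\alpha$ and each other, as the definition of a proper product allows. We then realize $(\beta)\Gamma$ concretely as the result of point-pushing: $(\beta)\Gamma$ agrees with $\beta$ away from a thin regular neighborhood $N(\Gamma)$. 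Each time $\Gamma$ crosses $\beta$, the corresponding small segment of $\beta$ is replaced by a ``finger'' running in $N(\Gamma)$ along the remainder of $\Gamma$ from that crossing, around $p_n$, and back. This gives an explicit model arc in which every crossing with $\alpha$ is either an original crossing of $\beta$ with $\alpha$ or a crossing of a finger strand with $\alpha$. Each finger crossing sits parallel to a crossing of $\Gamma$ with $\alpha$ and comes in an oppositely oriented pair of strands.

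Suppose now that this model arc forms an innermost bigon $B$ with $\alpha$. We read off the boundary of $B$ on the $(\beta)\Gamma$ side. It is a sequence of segments, each of which is a piece of $\beta$ or a strand parallel to a piece of $\Gamma$. The strand pieces follow $\Gamma$ in its forward direction, turn sharply at crossings of $\Gamma$ with itself (these are exactly the self-intersection points used in piecewise $\Gamma$-arcs), and may pass around $p_n$ by switching from $\Gamma_F$ to $\Gamma_I$. We then classify $B$ by which kinds of segments appear and where its two corners lie, on $\beta\cap\alpha$ or on strands crossing $\alpha$. Since $\alpha$, $\beta$ and $\Gamma$ are pairwise minimal, a bigon made only of $\beta$ or only of one $\Gamma$-segment is excluded. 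The remaining cases map onto the defined polygons:
\begin{itemize}
\item both corners on strands from the same side yield a generalized bigon preceded by a $\beta$ crossing;
\item one corner on $\beta$ and one on a strand yield a generalized $\beta$-to-$\alpha$ trigon;
\item corners on two different strands emanating from $\beta$ yield a (possibly degenerate) generalized rectangle;
\item any route through $p_n$ yields the ``with basepoint'' versions, where conditions (a)--(d) record that the finger entered along $\Gamma_F$ after a $\beta$ crossing and exited along $\Gamma_I$ to $\alpha$.
\end{itemize}
Orientation is what forces the $\beta$-to-$\alpha$ direction and the ordering $\Gamma^{(1)}<\cdots<\Gamma^{(\ell)}$, because fingers only follow $\Gamma$ forward from the crossing. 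Condition~1 of a proper product rules out all of these cases.

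One bigon is left over, created near $p_n$ itself when the innermost disk of $\Phi$ bounded by $\beta'$ around $p_n$ gets collapsed. In this case the finger from a crossing of $\Gamma_F$ with $\beta'$ cancels $\beta'$ against $\alpha$. Condition~2 prevents exactly this: either $\Gamma_I$ meets $\beta'$ first, so the disk is re-wrapped rather than undone, or $\Gamma$ avoids $\beta'$ at both ends. So no bigon exists, and the bigon criterion gives minimal position.

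The main obstacle is the exhaustiveness of this classification. We must show that the boundary of any innermost bigon really decomposes into piecewise $\Gamma$-arcs satisfying the precedence and no-loop conditions. The concern is that fingers from several crossings nest inside $N(\Gamma)$, and that a bigon could be assembled from parallel strands of different fingers. To handle this, we would first choose the innermost strand, using the nesting order of fingers within $N(\Gamma)$, and show that an innermost bigon's boundary follows a single forward trajectory of $\Gamma$, or two such trajectories in the rectangle case. This is the step we would check most carefully.
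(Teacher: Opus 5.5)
Your proposal follows essentially the paper's argument: the paper also sorts bigons between $\alpha$ and $(\beta)\Gamma$ by whether their corners are original or new crossings, and rules them out as follows. New--new corners become a generalized bigon or rectangle, according to the signs of the adjacent $\Gamma$--$\alpha$ crossings; original--new corners become a generalized $\beta$-to-$\alpha$ trigon; both are excluded by Condition~1, and original--original corners become the collapse of the $p_n$-disk, excluded by Condition~2. The one step you only assert is that an original--original bigon must arise this way; the paper justifies it by noting that the $\alpha'$--$\beta'$ disk contains a marked point by minimality of $\alpha$ and $\beta$, while the push along $\Gamma$ cannot evict any $p_i$ with $i\neq n$ (and the paper treats your flagged exhaustiveness issue no more carefully than you do).
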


\begin{proof}
We begin by choosing a specific representative of $\Gamma$ so that $\alpha, \beta$ and $\Gamma$ are pairwise in minimal position with no triple points of intersection.  Up to isotopy we can assume that $(\beta)\Gamma$ and $\alpha$ are transverse in $D_n$.  Consider now a point $s \in (\beta)\Gamma \cap \alpha$.  If $s$ also lies in $\beta \cap \alpha$ we will refer to this as an {\it original} point of intersection; otherwise we say that $s$ is a {\it new} point of intersection.   For the sake of simplicity, in what follows we will refer only generalized bigons, trigons, and rectangles; the proof goes through when adding in each case with basepoint.

Suppose now that $\alpha$ and $(\beta) \Gamma$ form a bigon, and suppose further that the two vertices of this bigon are both new points of intersection.  New points of intersection occur when a $\beta$-subarc precedes an $\alpha$-subarc as we travel along $\Gamma$. Therefore our two vertices correspond to adjacent crossings of $\Gamma$ and $\alpha$ of the same sign or of different signs, as shown in Figure~\ref{fig:gamma-signs}.
\begin{figure}[htpb!]
    \centering
    \vspace{3mm}
    \begin{overpic}[width=0.6\linewidth]{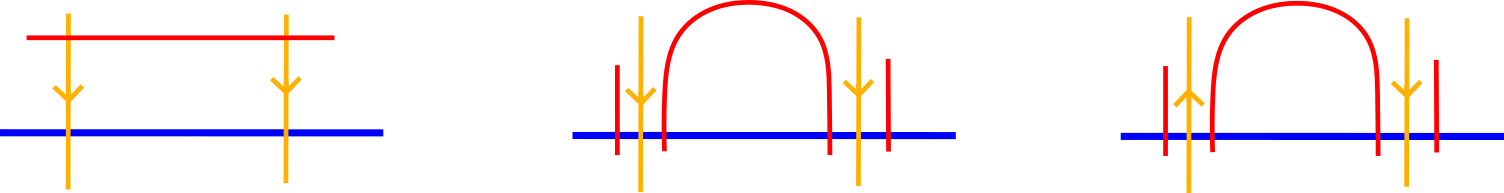}
        \put(48,1){\textcolor{blue}{$\alpha$}}
        \put(47,15){\textcolor{red}{$(\beta) \Gamma$}}
        \put(58,-1){\textcolor[RGB]{255,177,0}{$\Gamma$}}
    \end{overpic}
    \caption{On the left two subarcs of $\Gamma$ intersect $\beta$ with the same sign; on the right two subarcs of $\Gamma$  intersect $\beta$ with different signs.  }
    \label{fig:gamma-signs}
\end{figure}
If they are the same sign, this implies the existence of a generalized rectangle between $\Gamma$, $\alpha$, and $\beta$, which contradicts our assumption that $\Phi \cdot \Gamma$ is a proper product.  If they are different signs, this implies the existence of a generalized bigon between $\Gamma$ and $\alpha$, which again contradicts our assumption that $\Phi \cdot \Gamma$ is a proper product.  Therefore at least one vertex of any bigon between $\alpha$ and $(\beta) \Gamma$ must be an original point of intersection.

Now, the support of the push-map $\Gamma$ is a regular neighborhood of $\Gamma$ in $D_n$.  Hence we can decompose $(\beta)\Gamma$ as the union of subarcs of $\beta$ that are fixed by $\Gamma$, which we refer to as {\it original subarcs}, together with {\it new subarcs} that lie in the support of $\Gamma$.  Each of the new subarcs is one of four types:  (I) joins an original subarc to a new point of intersection,  (II) joins two new points of intersection forming part of the boundary of a disk bounding $p_n$, (III) joins two original subarcs, or (IV) joins two new points of intersection in a disk not containing only $p_n$.

\begin{figure}[htpb!]
    \centering
    \begin{overpic}[width=0.9\linewidth]{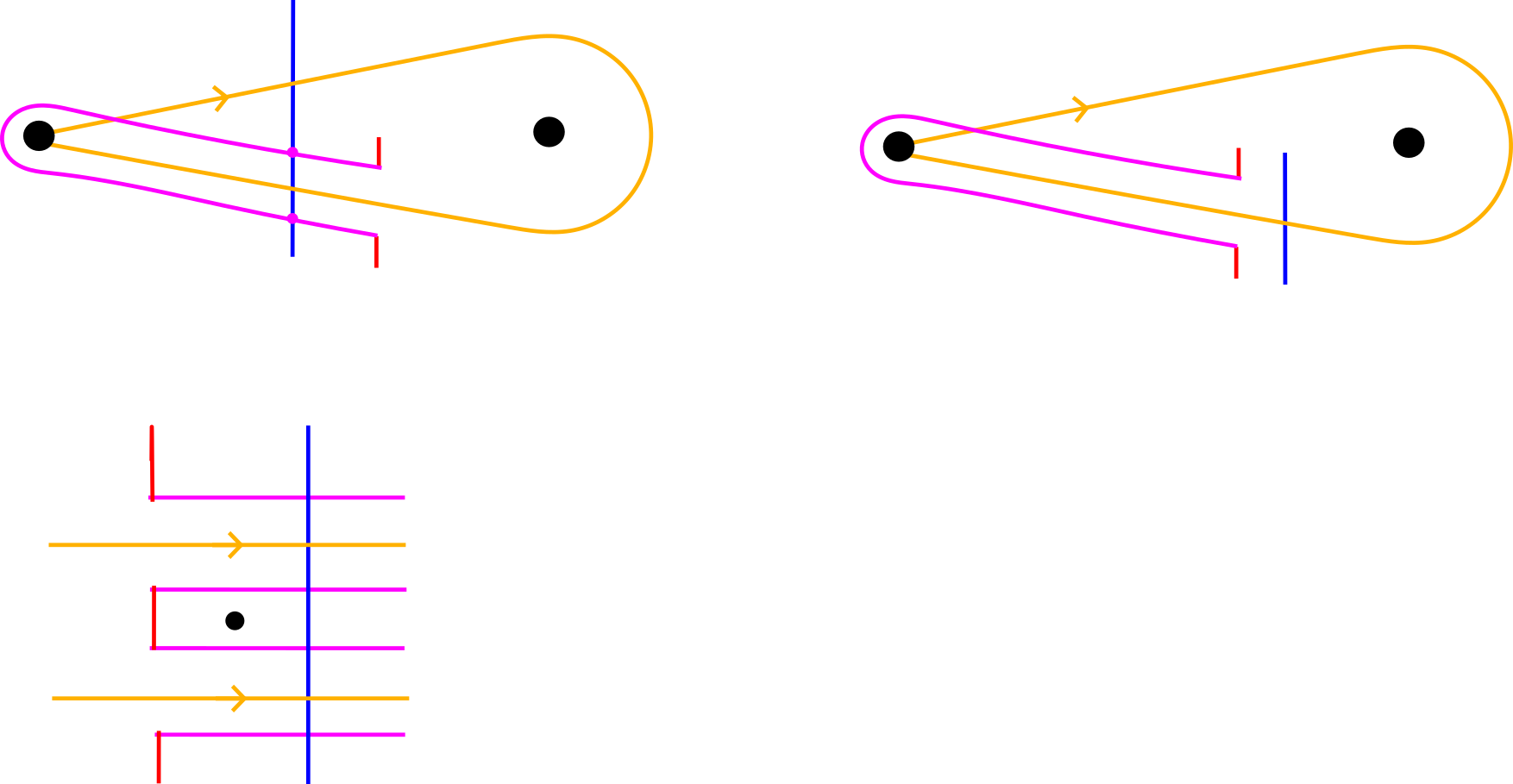}
        \put(13,36){(II)}
        \put(20.5,42.5){(I)}
        \put(76,42.5){(III)}
        \put(4,10){(IV)}
    \end{overpic}
    \caption{Subarcs of types (I)-(IV): here original subarcs of $\beta$ appear in red, while new subarcs of $\beta$ are colored purple.}
    \label{fig:new-crossings}
\end{figure}

Suppose now that the vertices $s_1, s_2$ of our bigon are both original points of intersection.  Then the subarc of $(\beta) \Gamma$ that joins $s_1$ to $s_2$ cannot contain any new subarcs of type (I), type (II), or type (III) and must therefore contain a new subarc of type (IV).  It follows that we have two subarcs $\beta' \subset \beta$ and $\alpha' \subset \alpha$ satisfying the following three statements:
\begin{itemize}
\item the union $\alpha' \cup \beta'$ does not form a bigon;
\item the union $\alpha' \cup (\beta')\Gamma$ forms a bigon; and
\item the 
intersection $\alpha' \cap (\beta')\Gamma = \alpha' \cap \beta' = \{ s_1, s_2 \}$.
\end{itemize}
The disk bounded by $\alpha' \cup \beta'$ in $D_n$ must contain at least one marked point, since we are assuming that $\alpha$ and $\beta$ are in minimal position. If the disk bounded by $\alpha' \cup \beta'$ contained $p_i$ for some $i \neq n$, then the disk bounded by $\alpha' \cup (\beta')\Gamma$ would also contain $p_i$, but this is not possible since $\alpha' \cup (\beta')\Gamma$ bounds a bigon. Therefore the disk bounded by $\alpha' \cup \beta'$ contains only the marked point $p_n$.  We must also have that the final component $\Gamma_F$ of $\Gamma$ intersects $\beta'$ in a single point, since $\alpha' \cup (\beta')\Gamma$ does not contain $p_n$.  Moreover, no other subarc of $\Gamma$ can intersect $\beta'$, as otherwise $\alpha' \cup (\beta')\Gamma$ would not form a bigon.  In particular, it follows that $\Gamma_I$ intersects $\alpha' \cup (\beta')\Gamma$ in a single point in $\alpha'$.  Hence the loop $\Gamma$ must intersect the disk bounded by $\alpha' \cup \beta'$ as shown in Figure~\ref{figure:p4-bigon}.  However, such a configuration contradicts the fact that $\Phi \cdot \Gamma$ is a proper product.  Therefore the vertices $s_1$ and $s_2$ cannot both be original points of intersection.  
\begin{figure}[htpb!] 
    \centering
    \begin{overpic}[width=.8\linewidth]{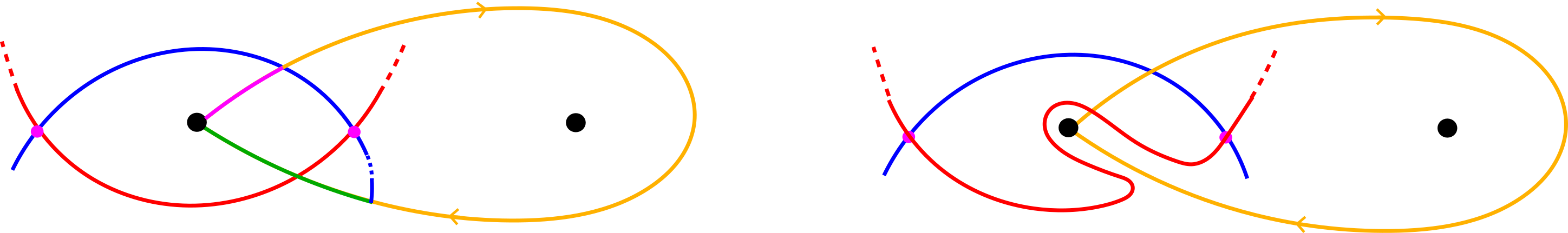}
        \put(3, 10){\textcolor{blue}{$\alpha'$}}
        \put(2.5, 1.5){\textcolor{red}{$\beta'$}}
          \put(-1, 6){\textcolor{red}{$s_1$}}
          \put(24, 6){\textcolor{red}{$s_2$}}
        \put(45.5, 6.5){\textcolor[RGB]{255,177,0}{$\Gamma$}}
        \put(18, 0){\textcolor[RGB]{0,170,0}{$\Gamma_F$}}
        \put(16, 6.5){\textcolor[RGB]{255,0,255}{$\Gamma_I$}}
        \put(10, 5.5){{$p_n$}}
    \end{overpic}
    \caption{A new bigon involving two original points of intersection $s_1$ and $s_2$ between $\alpha$ and $\beta$ must arise from a configuration of curves as depicted. } 
    \label{figure:p4-bigon}
\end{figure}

It remains to consider the case in which precisely one vertex of our bigon between $\alpha$ and $(\beta) \Gamma$ is an original point of intersection; we denote this vertex by $s_1$.  The other vertex $s_2$ must therefore be a new point of intersection formed when $\Gamma$ meets a subarc of $\beta$ and pushes it along until it crosses $\alpha$.  Without loss of generality, we can assume that $s_2$ is the new point of intersection to the left of the loop $\Gamma$, as shown in Figure~\ref{figure:One point case 1}.  
\begin{figure}[htpb!]
        \centering
        \begin{overpic}[width=0.6\linewidth]{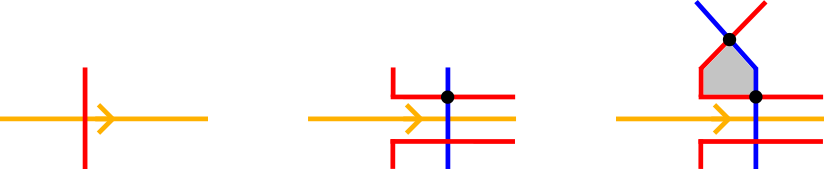}
            \put(12, 1.5){\textcolor[RGB]{255,177,0}{$\Gamma$}}
            \put(6, 8){\textcolor{red}{$\beta$}}
            \put(56, 0){\textcolor{blue}{$\alpha$}}
            \put(56, 11){\small{$s_2$}}
            \put(94, 11){\small{$s_2$}}
            \put(87, 18.5){\small{$s_1$}}
        \end{overpic}
        \caption{The simplest case where a bigon between $\alpha$ and $(\beta) \Gamma$ involves precisely one original point of intersection.}
              \label{figure:One point case 1}
    \end{figure}

Then there are two possible candidate regions for a bigon with $s_2$ as a vertex: one to the left of $\alpha$ as shown in the figure, and one to the right.   In each case, the vertex $s_1$ lies along $\alpha$ on the same side of $\Gamma$ as $s_2$ in the local picture shown in Figure~\ref{figure:One point case 1}. In the first case, there must be a piecewise $\Gamma$-arc forming a generalized $\beta$-to-$\alpha$ trigon with $s_2$ corresponding to its final point of intersection with $\alpha$, which contradicts that $\Phi \cdot \Gamma$ is a proper product; see Figure~\ref{figure:One point case 2 region 1}. 
 \begin{figure}[htpb!] 
    \centering
    \begin{overpic}[width=.7\linewidth]{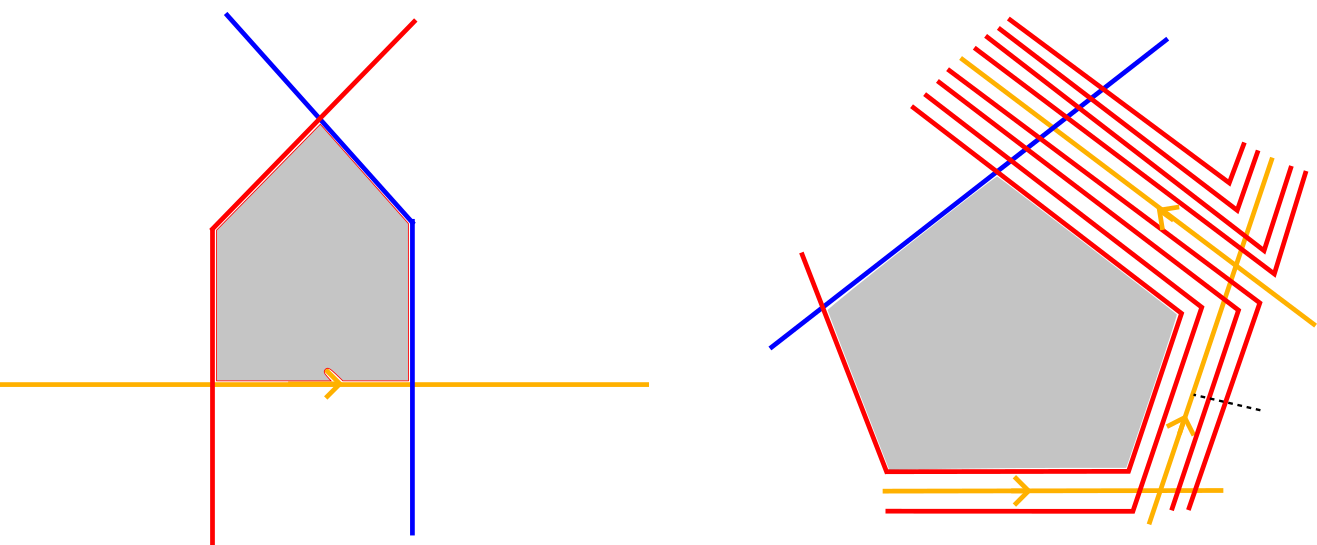}
    \put(23, 35){\textcolor{blue}{$\alpha$}}
    \put(13, 17){\textcolor{red}{$\beta$}}
    \put(97, 9){\textcolor[RGB]{255,177,0}{$\Gamma$}}
    \end{overpic}
    \caption{A possible bigon that arises from a generalized trigon.  The oriented arcs in gold are subarcs of a loop $\Gamma$. }
    \label{figure:One point case 2 region 1}
    \end{figure}
The second case also cannot occur, since this would imply that $(\beta)\Gamma$ continues on along a new subarc from $s_2$ to $s_1$ to form our bigon, which in turn implies that $\Gamma$ itself must form a generalized bigon with $\beta$ and that $s_1$ is also a new point of intersection, both of which are contradictions. The proposition follows. 
\end{proof} 

In the next section, we will restrict our attention to the case $n = 4$.

\section{Disk sequences in the 4-strand braid group}
\label{section:disks in B_4}
The aim of this section is to prove Theorem~\ref{theorem:k4-faithful} below, which states that the Burau representation $\rho_4$ is faithful on its restriction to the point-pushing group $K_4$.  By Proposition~\ref{prop:reduction to K_4}, this will imply our Main Theorem.  Our strategy for proving Theorem~\ref{theorem:k4-faithful} is to use proper products to identify certain braids that satisfy the parity condition.  This will allow us to apply Moody's theorem (Theorem~\ref{theorem:moody2}) to establish faithfulness.  

\subsection{Disks and parity in ${\bm \B_4}$}
We begin with a careful analysis of the types of disks that can arise in the disk sequence of a 4-strand braid.  To that end, we introduce some useful notation: for a disk $\Delta$ contained in $\D_4$, we let $\mcP(\Delta)$ denote the set of marked points that are contained in the interior of $\Delta$, and hence $\mathcal{P}(\Delta) \subseteq \{ p_1, p_2, p_3, p_4 \}$.  We note that up to this point in the paper we have not used the assumption that $n=4$. We will use that assumption in this section, particularly in Lemmas~\ref{lemma:characterizing sign-changing} and \ref{lemma:only 4}. 

\begin{lemma}
\label{lemma:characterizing sign-changing}
    Let $\Delta$ be a disk arising in the disk sequence of an element of $\B_4$.  Then $\Delta$ is sign-preserving if and only if $\mcP(\Delta)$ is one of the following:
    \[ \{p_1, p_3\}, \hspace{.1in} \{p_1, p_4\}, \hspace{.1in}\{p_2, p_3\}, \hspace{.1in}\{p_2, p_4\}, \hspace{.1in}\{p_1, p_3, p_4 \}, \hspace{.1in} \{p_2, p_3, p_4\}. \]
    Moreover, $\Delta$ is sign-changing if and only if $\mcP(\Delta)$ is one of the following:
  \[ \{p_3\}, \hspace{.1in}\{p_4\},  \hspace{.1in}\{p_3, p_4\}, \hspace{.1in}\{p_1, p_2, p_3\}, \hspace{.1in}\{p_1, p_2, p_4\},  \hspace{.1in}\{p_1, p_2, p_3, p_4\}. \]
\end{lemma}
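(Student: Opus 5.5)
The plan is to read off the sign behaviour of $\Delta$ from the shape of its two corners, and then use minimal position to cut down which sets $\mcP(\Delta)$ can actually occur.

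\textbf{Corners and signs.} Let $\Delta=\Delta_i$, bounded by $\alpha_i\cup\beta_i$ with corners $q_i,q_{i+1}$. Since the interior of $\beta_i$ misses $\alpha$, the disk $\Delta$ lies entirely on one side of $\alpha_i$ near $\alpha_i$, say above or below. At each corner the interior angle of $\Delta$ is either convex (one quadrant) or reflex (three quadrants).
\begin{itemize}
\item The ray of $\alpha$ continuing past a corner, away from $\alpha_i$, enters $\Delta$ exactly when that corner is reflex.
\item Let $\tau_1$ and $\tau_2$ be the two components of $\alpha\setminus\alpha_i$, running from the corners to $p_1$ and $p_2$. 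Each is disjoint from $\partial\Delta$ except at its corner, because $\beta_i$ has no crossings with $\alpha$ in its interior and $\alpha_i$ is disjoint from the tails.
\item Hence $\tau_j$, and therefore $p_j$, lies in $\Delta$ if and only if the corresponding corner is reflex.
\end{itemize}
Normalise so that $\beta$ crosses downward at $q_i$, so $\beta_i$ starts below $\alpha$.
\begin{itemize}
\item If $\Delta$ sits below $\alpha_i$, the corner at $q_i$ is convex. If $\Delta$ sits above, it is reflex.
\item At $q_{i+1}$, the disk is sign-changing exactly when $\beta_i$ arrives from below.
\item Checking the four cases: sign-changing happens exactly when both corners are convex or both are reflex, and sign-preserving exactly when one corner is convex and one is reflex.
\end{itemize}
Therefore $\Delta$ is sign-changing if and only if $|\mcP(\Delta)\cap\{p_1,p_2\}|$ is even. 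Since the two lists in the statement partition the remaining subsets, this gives both ``if and only if'' statements once the right sets are excluded.

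\textbf{Excluded sets.} It remains to show that $\mcP(\Delta)$ is never $\emptyset$, $\{p_1\}$, $\{p_2\}$ or $\{p_1,p_2\}$. The basepoint $p_*$ lies on $\partial D_4$, so it is never inside $\Delta$, and every subset of $\{p_1,\dots,p_4\}$ is otherwise a priori possible.
\begin{itemize}
\item $\mcP(\Delta)=\emptyset$ is a bigon, which contradicts minimal position of $\alpha$ and $\beta$.
\item For the other three sets, cut $\Delta$ along the tails $\tau_j$ that lie inside it. The result is a disk with no marked points in its interior.
\item So $\beta_i$ is isotopic rel endpoints, in the complement of the marked points, to a path that runs along $\alpha_i$ and around those tails, hugging $\alpha$.
\item I would then try to show that such an arc can be pushed off $\alpha$ near an endpoint of $\alpha$, contradicting minimality of $\iota(\alpha,\beta)$. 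The intended mechanism is that the loop around a tail is slid past its end $p_j$, which the arc only encircles tightly.
\end{itemize}
The tail-cutting is also what produces, for $n=3$, the specific pictures in Figure~\ref{figure:WN-disks for $n=3$}. The key point is that any legitimate disk must capture $p_3$ or $p_4$ in order to be essential.

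\textbf{Main obstacle.} I expect the hard step to be this exclusion of $\{p_1\}$, $\{p_2\}$ and $\{p_1,p_2\}$. One has to make precise why an arc of $\beta$ that winds only around endpoint(s) of $\alpha$, with no other marked points, is not in minimal position with $\alpha$. What I would try first is the following.
\begin{itemize}
\item Use the isotopy above to realise $\beta_i$ as a tight path around $\tau_j$.
\item Combine it with the adjacent segments of $\beta$ on either side of $q_i$ and $q_{i+1}$ to locate an innermost bigon between $\alpha$ and $\beta$.
\item Failing that, pass to the cover $\widetilde{D_4}$ and compare the lifts of $\alpha$ and $\beta$ there, using the bigon criterion for minimal position.
\end{itemize}
If this step holds up, the parity computation from the first part finishes the proof.
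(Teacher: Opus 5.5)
Your corner analysis is correct: a tail $\tau_j$ lies in $\Delta$ exactly when its corner is reflex, and $\Delta$ is sign-changing exactly when its two corners have the same type. Hence $\Delta$ is sign-changing iff $|\mcP(\Delta)\cap\{p_1,p_2\}|$ is even. This is the paper's claim (sign-preserving iff $\Delta$ contains exactly one endpoint of $\alpha$), and your four-case check covers every implication, which the paper only sketches.

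The gap is the exclusion of $\{p_1\}$, $\{p_2\}$ and $\{p_1,p_2\}$. You leave it open, and the mechanism you propose cannot work. Sliding the loop of $\beta_i$ ``past its end $p_j$'' is an isotopy through a marked point. Moreover, $\beta_i$ by itself is already tight: its interior misses $\alpha$, and it genuinely winds around $p_j$. A disk with $\mcP(\Delta)=\{p_1,p_3\}$, which the lemma permits, looks exactly the same near $\tau_1$: a reflex corner next to $p_1$, with $\beta_i$ wrapping around $p_1$. There $\alpha$ and $\beta$ can be in minimal position. So nothing done to $\beta_i$ near the tail can give a contradiction; the argument must use that neither endpoint of $\beta$ lies in $\Delta$.

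The missing idea, which is the paper's, runs as follows.
\begin{enumerate}
\item If $\emptyset\neq\mcP(\Delta)\subseteq\{p_1,p_2\}$, some corner $c$ is reflex. So the strand $\sigma$ of $\beta$ on the far side of $c$ from $\beta_i$ (the part before $q_i$, or after $q_{i+1}$) starts in the interior of $\Delta$.
\item $\sigma$ runs to $p_*$ or to $p_3$. Both lie outside $\Delta$: $p_*$ is on $\partial D_4$, and $p_3\notin\mcP(\Delta)$. By simplicity $\sigma$ cannot cross $\beta_i$, so it must meet $\alpha$ again inside $\Delta$ and eventually leave through the interior of $\alpha_i$.
\item Cut $\Delta$ along its tails to get a disk $\Delta'$ with no marked points. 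The pieces of $\beta$ in $\Delta$ become disjoint chords of $\Delta'$ with endpoints on the $\alpha$-part of $\partial\Delta'$, and $\sigma$ guarantees at least one chord.
\item Choose a chord whose side away from $\beta_i$ contains no other chord. That side cannot contain a tail end $p_j$. Otherwise its boundary arc would run around $p_j$ from one side of $\tau_j$ to the other. Then the second copy of whichever endpoint is closer to $p_j$ would lie in the interior of that arc and start another chord inside. (If the two endpoints are copies of one point, $\beta$ is not simple.)
\item Hence that side is a disk bounded by one arc of $\alpha$ and one arc of $\beta$ with no marked points, i.e.\ a bigon, contradicting minimal position.
\end{enumerate}
The paper compresses the last two steps into ``contradicts minimal position''. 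Your fallback of combining $\beta_i$ with the adjacent segments of $\beta$ points this way. What you still need to supply is the strand trapped at the reflex corner and the role of $p_3\notin\Delta$.
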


\begin{proof}
Let $\Delta$ be a disk in the disk sequence of an element $\Phi \in \B_4$, and let $q$ and $q'$ denote the two points of intersection between $\alpha$ and $\beta = (\beta_*^3) \Phi$ that lie in $\partial \Delta$.  Without loss of generality, we assume that $q$ lies between $p_1$ and $q'$ on $\alpha$, and that $q$ precedes $q'$ as we traveling along $\beta$.  

Suppose that $\mcP(\Delta) = \{p_1\}$.  Since $\beta$ and $\alpha$ intersect transversely at $q$, the subarc of $\beta$ joining the basepoint $p_*$ to $q$ must intersect $\partial \Delta$.  Hence this subarc either intersects the $\beta$-component of $\partial \Delta$ or the subarc of $\alpha$ with endpoints $q$ and $q'$.  The former contradicts our assumption that $\beta$ is simple, and the latter contradicts our assumption that $\beta$ and $\alpha$ are in minimal position.  Therefore $\mcP(\Delta) \neq \{p_1\}$.  By a similar argument, we see that $\mcP(\Delta) \neq \{p_2\}$ and that $\mcP(\Delta) \neq \{p_1, p_2\}$. 

We next claim that $\Delta$ is sign-preserving if and only if it contains precisely one of the two endpoints of $\alpha$; the lemma follows immediately from the claim. Suppose first that $\Delta$ is sign-preserving.  Without loss of generality, we assume that $\beta$ is oriented down (in our standard picture where $\alpha$ is horizontal) at both $q$ and $q'$, and that $\Delta$ contains $p_1$; see Figure \ref{fig:sign-preserving}.

\begin{figure}[htpb!]
    \centering
    \begin{overpic}[width=0.3\linewidth]{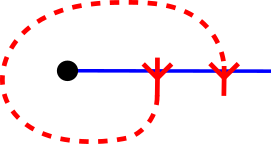}
        \put(56,35){$q$}
        \put(81,12){$q'$}
    \end{overpic}
    \caption{A sign-preserving disk}
    \label{fig:sign-preserving}
\end{figure}

Then $\partial \Delta$ is the union of a subarc of $\beta$ and a subarc of $\alpha$, both with endpoints $q, q'$, and the interior of $\Delta$ lies to the right of $\beta$ as we travel along $\beta$ from $q$ to $q'$.  This implies that $\Delta$ does not contain the subarc of $\alpha$ with endpoints $q'$ and $p_2$; in particular $\Delta$ does not contain $p_2$.  A similar argument shows that if $\Delta$ is sign-changing and contains $p_1$, then it also contains $p_2$.  The claim follows, and we are done.  
\end{proof}

\subsection{The point-pushing subgroup ${\bm K_4}$} 

Our focus in the remainder of this section will be the point-pushing subgroup $K_4$.  Our next step will be to analyze the types of disks that can appear in the disk sequence of certain braids in $K_4$.  Specifically, we will be constructing proper products of certain elements in $K_4$ to which we can apply Lemma~\ref{lemma:no-cancellations}.  We will then analyze the corresponding disk sequences, which will ultimately provide a mechanism for applying Theorem~\ref{theorem:moody2}.  

\begin{lemma}
\label{lemma:piecewise disjoint}
    Let $\Phi \in K_4$, let $\Gamma \in K_4$ be a push-map along a simple loop, and suppose that $\Phi = \Psi \cdot \Gamma$ is a proper product.  Let $\Delta_1,\dots,\Delta_{m-1}$ denote the disk sequence of $\Phi$. Then, for any fixed $i$, either $\Delta_i$ is a 1-disk containing only $p_4$, or there is an isotopy of $\beta = (\beta_*^3)\Phi$ so that the $\beta$-component of the boundary $\Delta_i$ is disjoint from $\Gamma$.
\end{lemma}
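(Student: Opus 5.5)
We will use the decomposition of $(\beta)\Gamma$ from the proof of Proposition~\ref{prop:ypush_minimalposition}. Write $\beta_0 = (\beta_*^3)\Psi$, so that $\beta = (\beta_0)\Gamma$. Since $\Psi\cdot\Gamma$ is a proper product, Proposition~\ref{prop:ypush_minimalposition} says that $\beta$ and $\alpha$ are already in minimal position. Hence the disk sequence of $\Phi$ can be read off directly from the arc $(\beta_0)\Gamma$ drawn in a regular neighborhood $N(\Gamma)$ of the simple loop $\Gamma$. This arc is a union of two kinds of subarcs: original subarcs of $\beta_0$ lying outside $N(\Gamma)$, and new subarcs lying in $N(\Gamma)$. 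Because $\Gamma$ is simple, $N(\Gamma)$ is an annulus-like neighborhood (a disk around $p_4$ with a band). Each new subarc runs parallel to $\Gamma$ on one side or the other, wrapping once around $p_4$. Fix $i$, and let $b_i$ be the $\beta$-component of $\partial\Delta_i$, with endpoints $q_i,q_{i+1}\in\alpha$ and interior disjoint from $\alpha$.

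The first step treats the case where $b_i$ is entirely an original subarc. Then $b_i$ lies outside $N(\Gamma)$ and is already disjoint from $\Gamma$, and we are done.

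The second step treats the case where $b_i$ contains a new subarc. We would use the type classification (I)--(IV) from the proof of Proposition~\ref{prop:ypush_minimalposition}.
\begin{itemize}
\item A type (II) subarc bounds, together with $\alpha$, a disk containing only $p_4$. We would check that $\Delta_i$ is then exactly this 1-disk, which gives the first alternative of the lemma.
\item For new subarcs of types (I), (III) and (IV), the subarc runs parallel to $\Gamma$ along a strand of $N(\Gamma)$ without going around $p_4$, or it goes around $p_4$ while $\Delta_i$ contains other punctures. In the parallel situation, we would isotope $b_i$ within $N(\Gamma)$, keeping its endpoints on $\alpha$, by sliding it off the core $\Gamma$ to the side of $\Gamma$ on which it already lies. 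This is an allowed equivalence, since the isotopy is of $(D_n,\alpha)$ and fixes $\alpha$ setwise.
\end{itemize}
Simplicity of $\Gamma$ is what makes this sliding coherent. The strands of $(\beta_0)\Gamma$ inside $N(\Gamma)$ are nested parallel copies of $\Gamma$ on either side, so one consistent isotopy moves them all off $\Gamma$ without creating intersections with $\alpha$. Crossings of $b_i$ with $\Gamma$ are then either removable in this way or are forced crossings where $b_i$ cuts transversely across the band. We would argue that a forced transverse crossing only occurs at the original crossings of $\beta_0$ with $\Gamma$, and that these can be pushed to the ends of the band, near $\alpha$, beyond the endpoints of $b_i$.

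The main obstacle is this last point: excluding an essential crossing of $b_i$ with $\Gamma$ that survives every isotopy fixing $\alpha$. This would mean that $\Gamma$ enters $\Delta_i$ through $b_i$ and must leave through the $\alpha$-side of $\partial\Delta_i$. I would derive a contradiction from the proper product hypotheses. Condition 1, the absence of bigon-forming polygons, rules out a $\Gamma$-arc from $b_i$ to $\alpha$ cobounding a generalized trigon or rectangle inside $\Delta_i$. The remaining configuration involves $p_4$ through $\Gamma_I$ and $\Gamma_F$, namely the "with basepoint" polygons. If $\Delta_i$ is not the 1-disk around $p_4$, this configuration is excluded by condition 1 together with condition 2 on $\beta'$. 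If $\Delta_i$ is that 1-disk, we land in the first alternative. I expect to need a careful case check of where $p_4$ sits relative to $\Delta_i$.
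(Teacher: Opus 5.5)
\textbf{There is a genuine gap: the key case of the lemma is left unproved.}

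Your setup matches the paper's. You use minimal position of $(\beta_0)\Gamma$ with $\alpha$ from Proposition~\ref{prop:ypush_minimalposition}. You use the local model of a push along a simple loop: each strand of $\beta_0$ crossing $\Gamma$ becomes a finger running along $\Gamma$ on both sides up to $p_4$ and wrapping around it, with the fingers nested. You observe that pieces parallel to $\Gamma$ miss $\Gamma$, and that the type (II) case gives the 1-disk.

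However, the whole content of the lemma is the case you postpone: $b_i$ contains a finger tip around $p_4$ and $\Delta_i$ is not that 1-disk. Your description of where $b_i$ meets $\Gamma$ is also off here. In the pushed picture, $\beta$ no longer crosses $\Gamma$ at the points of $\beta_0\cap\Gamma$, because each such strand has been split into two shafts on either side of $\Gamma$. Its only intersections with $\Gamma$ are where the finger tips cross $\Gamma_I$ next to $p_4$. These crossings really are present. Your assertion that they ``can be pushed to the ends of the band, near $\alpha$, beyond the endpoints of $b_i$'' is exactly what has to be proved. It fails in general for the 1-disk around $p_4$, which is why the lemma excludes that disk, and you give no mechanism that distinguishes that case from the others.

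Your proposed contradiction argument does not close this gap as written. Conditions 1 and 2 of a proper product are hypotheses about $\alpha$, $\Gamma$ and $\beta_0=(\beta_*^3)\Psi$; condition 2 concerns the innermost $p_4$-disk of $\Psi$. Neither is a statement about the pushed arc $\beta=(\beta_0)\Gamma$. So you cannot invoke condition 1 to forbid a $\Gamma$-arc from $b_i$ to $\alpha$ cobounding a trigon or rectangle inside $\Delta_i$. You would first have to translate that configuration into one involving $\beta_0$, and that translation is the nontrivial step you have not carried out. Note that $\Gamma$-arcs from the pushed arc to $\alpha$ do occur, for example the part of $\Gamma_I$ from a finger-tip crossing to $\Gamma_I\cap\alpha$, and condition 1 says nothing about them.

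The paper handles this case constructively rather than by contradiction. Because $\Gamma$ is simple, it has the explicit nested-finger picture available. From it, the paper exhibits a single isotopy of $\beta$, fixing $\alpha$ setwise, that slides the part of each such $\beta$-component meeting $\Gamma$ along an adjacent subarc of $\alpha$ and off $\Gamma$, while preserving the disk sequence. To complete your argument you would need to supply such an isotopy, or a genuine reduction to bigon-forming polygons among $\alpha$, $\beta_0$, $\Gamma$. Either way you would also need the case analysis of where $p_4$ lies relative to $\Delta_i$ that you anticipate.
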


Before giving the proof, we emphasize that Lemma~\ref{lemma:piecewise disjoint} only guarantees that the $\beta$-component of any single disk $\Delta_i$ can be isotoped off $\Gamma$.  It does not guarantee that each such $\beta$-component can be isotoped off $\Gamma$ simultaneously; indeed, this is not possible in general.  We also introduce some notation that will be convenient in the proof of the lemma and in what follows: if $\gamma$ is an arc or a loop in the disk $D_n$, we will refer to a disk whose boundary is the union of a single subarc of $\alpha$ together with a single subarc of $\gamma$ as an $\alpha$-$\gamma$ disk. 

\begin{proof}
    Let $\beta' = (\beta_*^3)\Psi$. Again, we choose fixed representatives of $\Gamma$ and $\alpha$ that are in minimal position.  By Proposition~\ref{prop:ypush_minimalposition}, we may choose a representative of $\beta'$ so that $(\beta')\Gamma $ is in minimal position with respect to $\alpha$.  Since $\Gamma$ is a simple loop , we can represent the action of the push-map $\Gamma$ via the schematic in Figure~\ref{figure:local-pictures}. 
    \begin{figure}[htpb!]
        \centering
        \vspace{2mm}
        \begin{subfigure}{.45\linewidth}
            \begin{overpic}[width=0.9\linewidth]{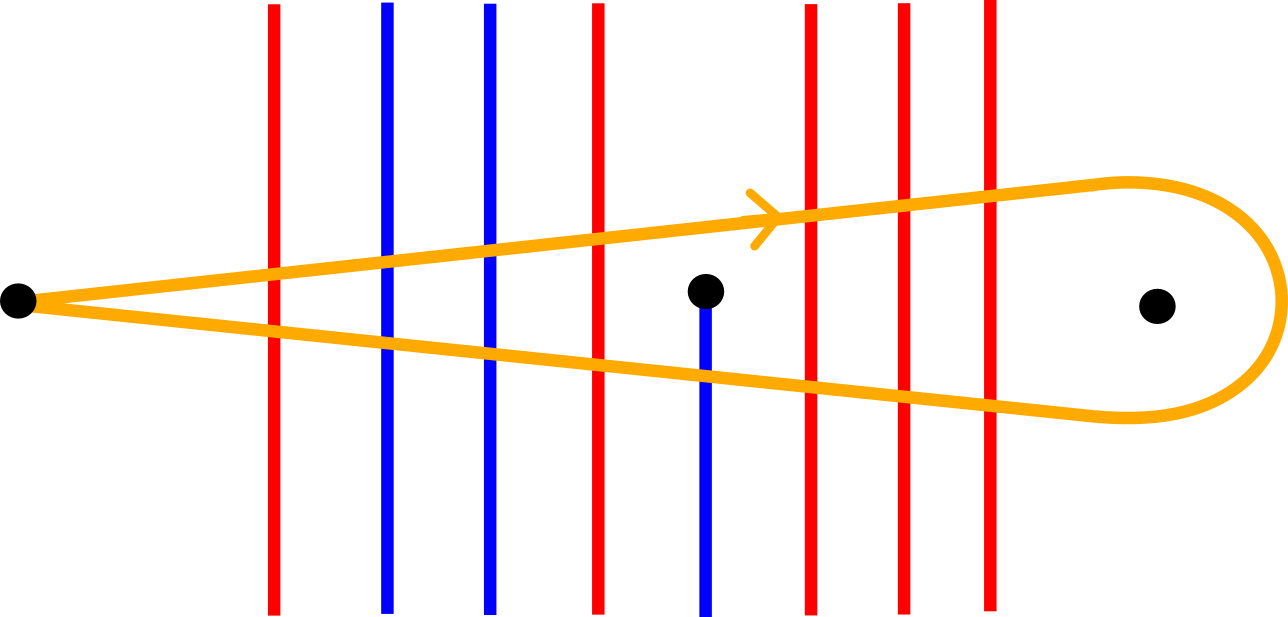}
                \put(32, 50){\textcolor{blue}{$\alpha$}}
                \put(68, 50){\textcolor{red}{$\beta$}}
                \put(85, 37){\textcolor[RGB]{255,177,0}{$\Gamma_1$}}
            \end{overpic}
        \caption{The local picture of pushing along $\Gamma_1$.}
        \label{figure:local-picture1}
        \end{subfigure}
        \begin{subfigure}{.45\linewidth}
        \includegraphics[width=0.9\linewidth]{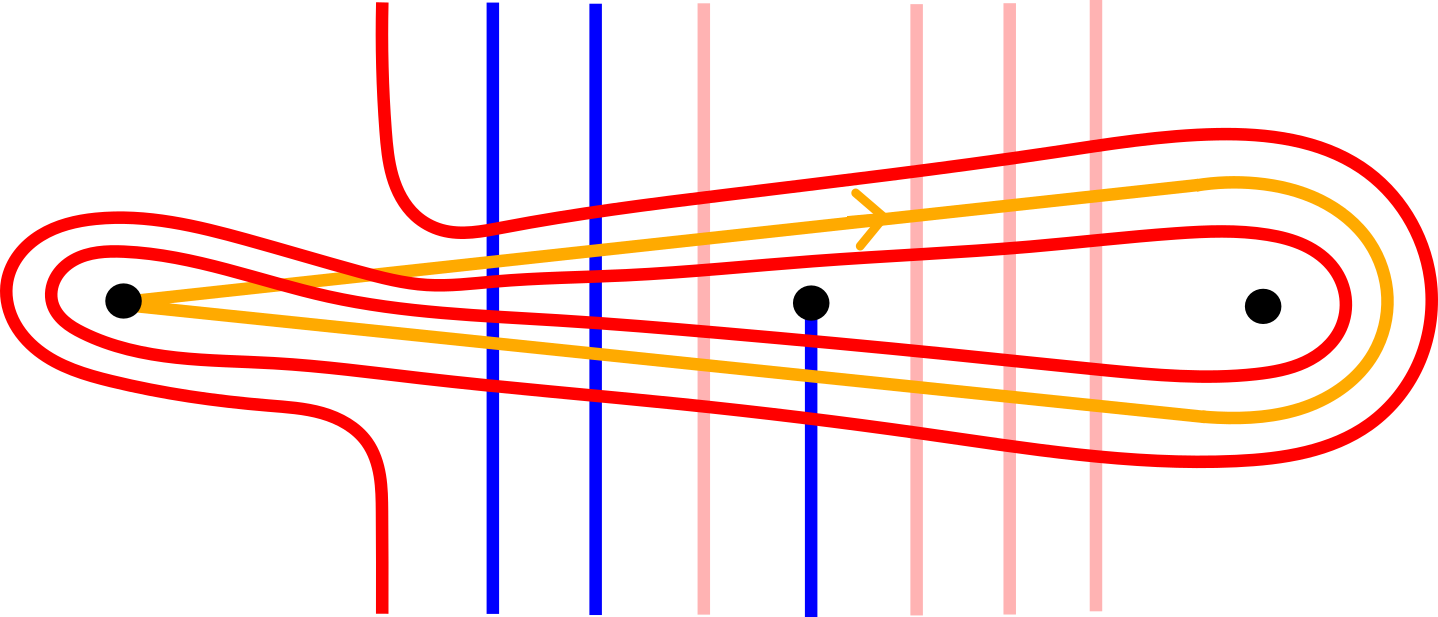}
        \caption{Pushing the first $\beta$ strand}
        \label{figure:local-picture2}
        \end{subfigure}
        \caption{Here blue arcs represent subarcs of $\alpha$, while red/pink arcs represent subarcs of $\beta$.}
        \label{figure:local-pictures}
    \end{figure}
    Figure~\ref{figure:local-picture2} shows the resulting local picture for $\beta = (\beta_*^3)\Phi$ after pushing $\beta'$ along $\Gamma$, possibly creating a number of new disks in the associated disk sequence.  For each such disk that does not contain $p_4$, Figure~\ref{figure:local-picture2} shows that its $\beta$-component is disjoint from $\Gamma$.  For the remaining $\alpha$-$\beta$ disks in Figure~\ref{figure:local-picture2}, there is an isotopy of $\beta$ in $D_n$ that preserves the disk sequence while removing the intersection of the $\beta$-component of these disks from $\Gamma$. This isotopy can be visualized by sliding the ``topmost'' part of any $\beta$-component of the boundary of a disk that intersects $\Gamma$ in Figure~\ref{figure:local-picture2} upwards along the rightmost subarc of $\alpha$ that appears in the local picture; one can perform a single isotopy that moves all such $\beta$-subarcs simultaneously off $\Gamma$. 
\end{proof}

\p{A particular push-map.}  Let $\Gamma_1$ be the push-map corresponding to the loop shown in Figure~\ref{figure:gamma1}.
\begin{figure}[htpb!]
    \centering
    \includegraphics[width=0.6\linewidth]{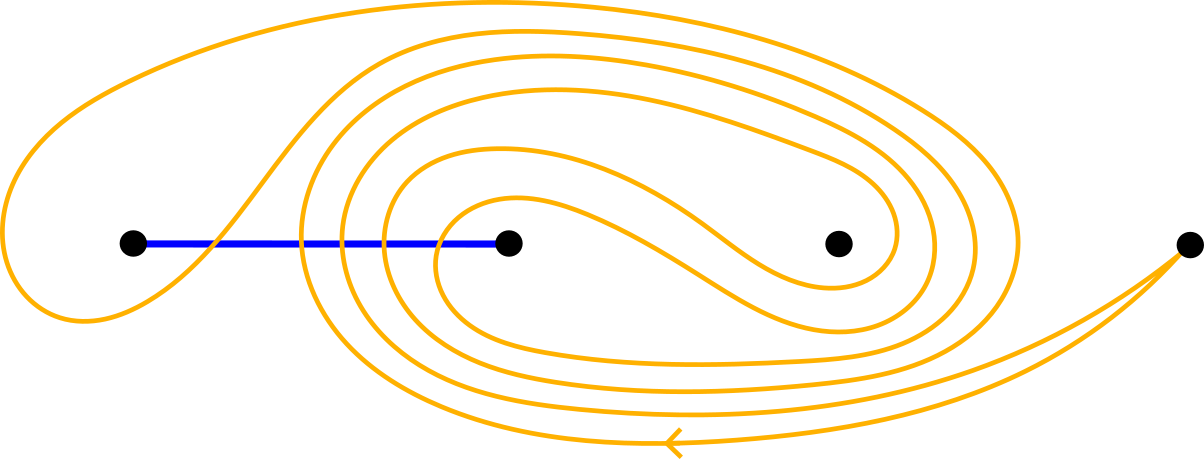}
    \caption{A particular push map $\Gamma_1$. }
    \label{figure:gamma1}
\end{figure}

\begin{lemma}
\label{lemma:only 4}
    Let $\Phi\in K_4$, let $\Gamma_1\in K_4$ be the push-map along the simple loop shown in Figure~\ref{figure:gamma1}, and suppose that $\Phi = \Psi \cdot \Gamma_1$ is a proper product. Let $\Delta$ be a disk arising in the disk sequence of $\Phi$. Then $\Delta$ is sign-preserving if and only if $\mcP(\Delta)$ is the following:
    \[ \hspace{.1in}\{p_2, p_3\}. \]
    Moreover, $\Delta$ is sign-changing if and only if $\mcP(\Delta)$ is one of the following:
  \[ \{p_3\}, \hspace{.1in}\{p_4\}, \hspace{.1in}\{p_1, p_2, p_3\},  \hspace{.1in}\{p_1, p_2, p_3, p_4\}. \]
\end{lemma}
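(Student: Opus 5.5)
By Lemma~\ref{lemma:characterizing sign-changing}, the sign behaviour of a disk in the disk sequence of any 4-braid is determined by $\mcP(\Delta)$. So the statement reduces to a list of excluded point-sets. I will show that when $\Phi = \Psi\cdot\Gamma_1$ is a proper product, no disk $\Delta$ in the disk sequence of $\Phi$ has $\mcP(\Delta)$ equal to any of
\[
\{p_1,p_3\},\ \{p_1,p_4\},\ \{p_2,p_4\},\ \{p_1,p_3,p_4\},\ \{p_2,p_3,p_4\},\ \{p_3,p_4\}.
\]
The two lists in the statement are then just what remains of the lists in Lemma~\ref{lemma:characterizing sign-changing}.

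\textbf{Step 1: reduce to arcs avoiding $\Gamma_1$.} Fix a disk $\Delta$ in the sequence. If $\Delta$ is the $1$-disk containing only $p_4$, there is nothing to prove. Otherwise, Lemma~\ref{lemma:piecewise disjoint} lets me isotope $\beta=(\beta_*^3)\Phi$ so that the $\beta$-component $b$ of $\partial\Delta$ is disjoint from the simple loop $\Gamma_1$. The arc $b$ has both endpoints on $\alpha$, meets $\alpha$ only at those endpoints, and lies in the complement of $\Gamma_1$. So $\Delta$ is an $\alpha$-$b$ disk with $b$ confined to one complementary region of $\Gamma_1$. Here I use the specific picture in Figure~\ref{figure:gamma1}: $\Gamma_1$ is based at $p_4$ and encloses a fixed set of punctures, and it meets $\alpha$ in a fixed pattern that cuts $\alpha$ into a few segments. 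I will read off from the figure which segments of $\alpha$ and which punctures lie in each region.

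\textbf{Step 2: enumerate the possible disks.} An arc $b$ in one region with endpoints on $\alpha$ bounds, together with a segment of $\alpha$, a disk whose punctures are determined by which side of $\Gamma_1$ the arc lies on and which $\alpha$-segments it connects. I will go through the combinatorial possibilities:
\begin{itemize}
\item $b$ inside the disk bounded by $\Gamma_1$;
\item $b$ outside $\Gamma_1$;
\item the endpoints of $b$ on the same $\alpha$-segment or on different segments.
\end{itemize}
In each case I check which puncture sets can occur. The expectation, matching the statement, is as follows. Since $p_4$ lies on $\Gamma_1$, a disk whose $b$ avoids $\Gamma_1$ contains $p_4$ only if it contains the whole of $\Gamma_1$'s enclosed region, which forces $p_1,p_2$ (and presumably $p_3$) along with $p_4$. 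Likewise, $p_1$ can only be captured together with $p_2$. This rules out $\{p_1,p_3\}$, $\{p_1,p_4\}$, $\{p_2,p_4\}$, $\{p_1,p_3,p_4\}$, $\{p_2,p_3,p_4\}$ and $\{p_3,p_4\}$. The sets $\{p_3\}$, $\{p_2,p_3\}$, $\{p_1,p_2,p_3\}$ and $\{p_1,p_2,p_3,p_4\}$ survive. Minimal position, which Proposition~\ref{prop:ypush_minimalposition} guarantees, rules out the puncture-free disks.

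\textbf{Step 3: the disks through $p_4$, and the main obstacle.} The exceptional $p_4$-only disk gives $\{p_4\}$, which is sign-changing. The hard part will be Step~2 for disks touching $p_4$, since $p_4$ is the basepoint of $\Gamma_1$ and sits on the loop. I must argue carefully that an arc avoiding $\Gamma_1$, other than one bounding the special $1$-disk, cannot separate $p_4$ from the region enclosed by $\Gamma_1$. I would handle this by thickening $\Gamma_1$ to an annulus-like neighbourhood with $p_4$ on its core: any $b$ disjoint from it sees $p_4$ together with the punctures on one fixed side of $\Gamma_1$. I would then check against the figure that this side contains $p_1$ and $p_2$.
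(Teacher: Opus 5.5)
Your route is the paper's: use Lemma~\ref{lemma:piecewise disjoint} to isotope the $\beta$-component off $\Gamma_1$, enumerate the finitely many resulting disk types against the picture of $\Gamma_1$, and combine with Lemma~\ref{lemma:characterizing sign-changing}. There are, however, two concrete gaps.

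\textbf{The list of excluded sets is incomplete.} Lemma~\ref{lemma:characterizing sign-changing} classifies $\{p_1,p_2,p_4\}$ as sign-changing, and this set does not appear in the statement's sign-changing list. So you must also show that no disk has $\mathcal{P}(\Delta)=\{p_1,p_2,p_4\}$. There are seven sets to exclude, not six. Without this case the ``only if'' direction for sign-changing disks is unproved, and your claim that the statement's lists are ``just what remains'' is false as written. Your hedge ``and presumably $p_3$'' is exactly this missing case.

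\textbf{The Step 2--3 argument for disks containing $p_4$ is invalid.} Knowing that $b$ avoids $\Gamma_1$ does not put $\Delta$ on one side of $\Gamma_1$. The other boundary piece of $\Delta$ is a segment $a\subset\alpha$, and $\alpha$ crosses $\Gamma_1$. So $\Gamma_1$ can enter and leave $\Delta$ through $a$, and $\Delta$ can contain $p_4$ without containing everything $\Gamma_1$ encloses.

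This really happens. The paper's arc $\delta_2$ in Figure~\ref{figure:wn-disks-five} bounds a $1$-disk containing only $p_4$ whose $\beta$-component misses $\Gamma_1$. Your thickened-annulus argument would therefore also rule out a disk that actually occurs. Setting aside $\{p_4\}$ via the exceptional case of Lemma~\ref{lemma:piecewise disjoint} does not repair this: nothing in the argument distinguishes $\{p_4\}$ from, say, $\{p_3,p_4\}$ or $\{p_1,p_2,p_4\}$. The same objection applies to your claim that ``$p_1$ can only be captured together with $p_2$''; nothing general prevents $\{p_1,p_3\}$.

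\textbf{What is needed instead} is an explicit finite enumeration.
\begin{itemize}
\item The interior of $b$ misses $\alpha$, $\Gamma_1$ and the punctures, so it lies in a single complementary region of $\alpha\cup\Gamma_1$, with both endpoints on $\alpha$-edges of that region.
\item For each region, each pair of such edges, and each side of any puncture lying in the region, read off $\mathcal{P}(\Delta)$.
\item For the specific loop $\Gamma_1$, this gives exactly the five types $\delta_1,\dots,\delta_5$ of Figure~\ref{figure:wn-disks-five}, with puncture sets $\{p_3\},\{p_4\},\{p_2,p_3\},\{p_1,p_2,p_3\},\{p_1,p_2,p_3,p_4\}$.
\end{itemize}
That enumeration excludes all seven unwanted sets at once, including $\{p_3,p_4\}$ and $\{p_1,p_2,p_4\}$.
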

\begin{proof}
    By Lemma \ref{lemma:piecewise disjoint} any disk $\Delta$ in the disk sequence of $\Phi$ can have its $\beta$-component isotoped off of $\Gamma_1$. Referring to Figure~\ref{figure:wn-disks-finite}, we see that, up to equivalence, there are only finitely many possible $\beta$-subarcs forming an $\alpha$-$\beta$ disk that is disjoint from $\Gamma_1$. 
      \begin{figure}[htpb!]
        \centering
        \begin{overpic}[width=0.45\linewidth]{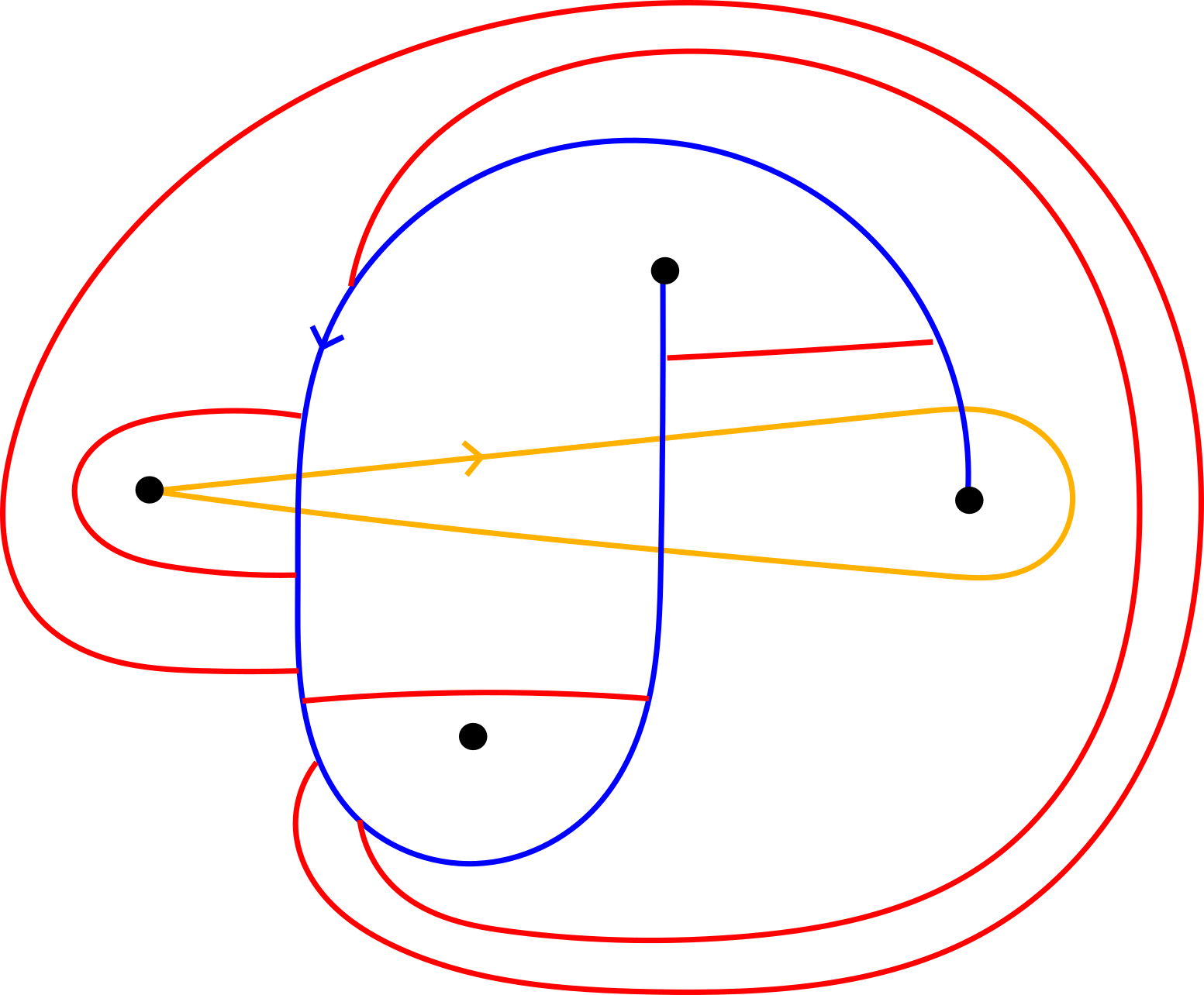}
        \put(10, 38){$p_4$}
        \put(37, 17){$p_3$}
        \put(74, 40){$p_1$}
        \put(53, 64){$p_2$}
        \end{overpic}
        \caption{The loop $\Gamma_1$ is shown in gold, together with red arcs indicating all possible disks arising in the disk sequence of a braid in $K_4$ such that the $\beta$-component of the disk can be isotoped off of $\Gamma_1$.}
        \label{figure:wn-disks-finite}
    \end{figure}
\end{proof} 

    The five possible types of disk that can arise up to equivalence are shown in Figure~\ref{figure:wn-disks-five} using our standard embedding of $\alpha$ and $\Gamma_1$ in $D_4$. 

    \begin{figure}[htpb!]
       \centering
        \begin{overpic}[width=0.75\linewidth]{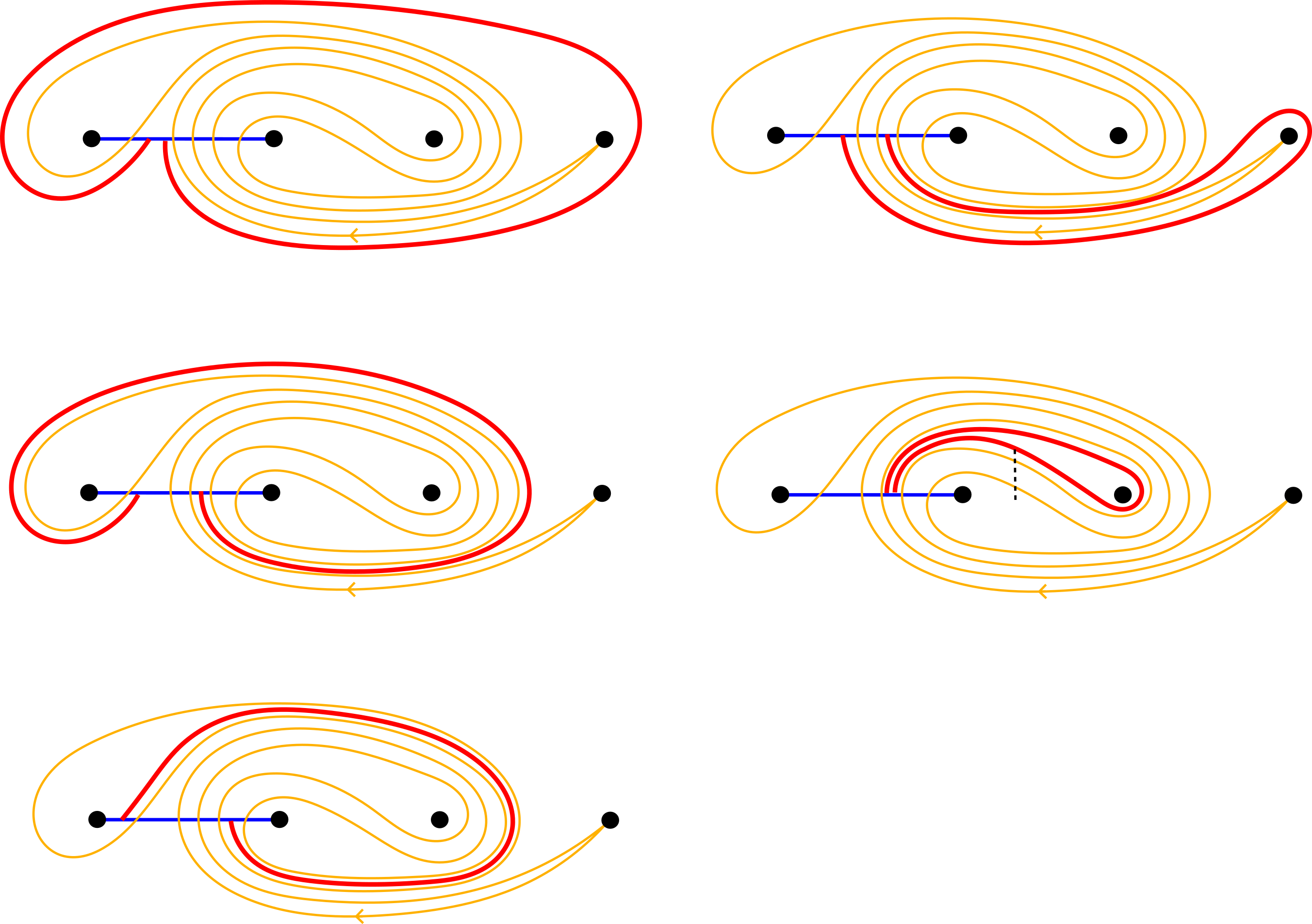}
            \put(3,69){\textcolor{red}{$\delta_1$}}
            \put(97,64){\textcolor{red}{$\delta_2$}}
            \put(3,41){\textcolor{red}{$\delta_3$}}
            \put(76,29.5){\textcolor{red}{$\delta_4$}}
            \put(8,11){\textcolor{red}{$\delta_5$}}
        \end{overpic}
        \caption{The loop $\Gamma_1$ is shown in gold, together with five red arcs $\delta_1, \ldots, \delta_5$ indicating all five types of disks arising in the disk sequence of a braid in $K_4$ such that the $\beta$-component of the disk can be isotoped off of $\Gamma_1$.}
        \label{figure:wn-disks-five}
    \end{figure}

\subsection {Embedding in ${\bm \B_5}$} 
\label{section:embedding}
Under the hypotheses of Lemma~\ref{lemma:only 4}, there is only one possible type of disk arising in the disk sequence of a braid in $K_4$ that violates the parity condition: a disk containing all four punctures.  In order to deal with this, we will pass briefly to the setting of the disk with five punctures.  To that end, we now fix an embedding of $D_4$ in the disk $D_5$; we can think of this as simply fixing an additional marked point $p_5$ in the interior of the disk $D_4$. If $\Phi \in B_4$, we let $f(\Phi)\in B_5$ denote the image of $\Phi$ in $B_5$ under the induced injection $f: \B_4 \hookrightarrow \B_5$. 

The main goal of Section~\ref{section:embedding} is to prove Proposition~\ref{prop:four-removal} and its corollary, which (roughly speaking) establish conditions under which a push-map will satisfy the parity condition.  We will begin with a series of lemmas that will equip us to deal with 4-disks, which under our hypotheses are the one remaining obstruction to satisfying the parity condition.  

Let $\beta$ be an arc in $D_5$ from $p_*$ to $p_3$, let $\Gamma$ be a loop based at $p_5$, and let $\Gamma_F$ denote the subarc of $\Gamma$ traveling from its final point of intersection with $\alpha$ back to $p_5$.   We note that the set of 4-disks in the disk sequence of $\Phi$ are necessarily nested; in particular, their respective $\beta$-components form a ``band'', as indicated in Figure~\ref{figure:4s become 5s both}.  We will say the loop $\Gamma$ is {\it disjoint from $\beta$ modulo 4-disks} if $\Gamma_F$ intersects this band of $\beta$-components transversely and $\Gamma$ is otherwise disjoint from $\beta$, so that $\Gamma$ intersects the $\beta$-component of each 4-disk precisely once and $\Gamma$ is disjoint from the $\beta$-component of every other disk in the disk sequence of $\beta$.

\begin{lemma}
\label{lemma:4s become 5s}
    Let $\Phi \in B_4$, let $\beta = (\beta_*^3) \Phi$, and let $\Gamma$ be a loop in $D_5$ based at $p_5$ that is disjoint from $\beta$ modulo 4-disks.  Then the disk sequence of $f(\Phi) \cdot \Gamma$ is obtained from the disk sequence of $\Phi$ by replacing each 4-disk with a 5-disk.  Moreover, each such 5-disk is necessarily sign-changing.
\end{lemma}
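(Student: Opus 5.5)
We work directly with the arc $(\beta_*^3)(f(\Phi)\cdot\Gamma) = (\beta)\Gamma$, where $\beta=(\beta_*^3)f(\Phi)$ is the same picture as $(\beta_*^3)\Phi$ with the extra marked point $p_5$ added. Since $p_5$ lies in no disk of the sequence of $\Phi$ before pushing (we may place $p_5$ near $\partial D_4$, outside all disks), the disk sequence of $f(\Phi)$ coincides with that of $\Phi$, with the same sets $\mcP(\Delta_i)$. The support of the push along $\Gamma$ is a regular neighbourhood $N(\Gamma)$. By hypothesis $\Gamma$ meets $\beta$ only in the band formed by the $\beta$-components of the nested 4-disks, and only along $\Gamma_F$, once per 4-disk. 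Hence every original subarc of $\beta$ outside that band is fixed by the push. In particular, every non-4-disk keeps its boundary and its vertices $q_i,q_{i+1}$.

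Next we analyze the band. Push $p_5$ along $\Gamma$. Each crossing of $\Gamma_F$ with a $\beta$-component is a crossing on the final component, which occurs after the last intersection of $\Gamma$ with $\alpha$. The effect on that $\beta$-subarc is therefore to drag a finger along $\Gamma_F$ to $p_5$ and wrap it around $p_5$; the finger picks up no new intersections with $\alpha$. Up to isotopy, this simply moves $p_5$ across the $\beta$-component into the 4-disk. Equivalently, the push is isotopic rel $\alpha$ to a point-push of $p_5$ along $\Gamma_F$ reversed. It crosses the nested band transversally, entering every 4-disk, and stops inside the innermost one, while $\alpha\cap\beta$ is unchanged. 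So the new arc meets $\alpha$ in exactly the points $q_1,\dots,q_m$, in the same order along $\beta$ and with the same signs. Each former 4-disk now contains $p_1,\dots,p_5$, and every other disk is unchanged.

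Two points need checking. First, the resulting arcs are still in minimal position, so that this really is the disk sequence. Minimal position follows because every bigon would have to be one of the old disks, and each old disk still contains a marked point. Second, the sign statement: the signs $\epsilon_i,\epsilon_{i+1}$ at the vertices of a former 4-disk were untouched. A 4-disk is sign-changing by Lemma~\ref{lemma:characterizing sign-changing}, since it contains both $p_1$ and $p_2$ (the claim in that proof). Therefore each 5-disk is sign-changing, as is consistent with the parity condition for $5$ points.

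The main obstacle is the isotopy step. We must justify that the finger produced by $\Gamma_F$ crossing the band can be retracted without crossing $\alpha$, i.e.\ that the region swept between $p_5$'s original position and the band contains no part of $\alpha$ and no other marked points. We will argue this from the definition of $\Gamma_F$: its interior is disjoint from $\alpha$, and it meets $\beta$ only in the band. The region traced by the push along $\Gamma_F$ is thus a thin strip disjoint from $\alpha$ and from all other $\beta$-subarcs. The part of $\Gamma$ before $\Gamma_F$ acts on the image only after the point has passed, so it moves nothing that touches $\beta$. If needed, we will handle this carefully by decomposing the push as the product of the push along $\Gamma\setminus\Gamma_F$ (which acts trivially on $\beta$, being disjoint from it) and the push along $\Gamma_F$.
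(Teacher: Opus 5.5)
Your proposal is correct and follows the same route as the paper. Since $\Gamma$ meets $\beta$ only where $\Gamma_F$ crosses the band of 4-disk $\beta$-components, the push only drags those components along the tail of $\Gamma_F$, a path disjoint from $\alpha$. This amounts to moving $p_5$ into every 4-disk while leaving the intersection points, their order and signs, and all other disks unchanged. The sign-changing claim then follows because each 4-disk contains both $p_1$ and $p_2$ (Lemma~\ref{lemma:characterizing sign-changing}).

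Your additional checks fill in details that the paper leaves to Figure~\ref{figure:4s become 5s both}. These are minimal position via an innermost-bigon argument, and splitting the push at the initial point of $\Gamma_F$ so that the possibly non-simple earlier part of $\Gamma$ visibly fixes $\beta$.
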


\begin{figure}[htpb!]
        \centering
        \vspace{2mm}
        \begin{subfigure}{.45\linewidth}
            \begin{overpic}[width=0.9\linewidth]{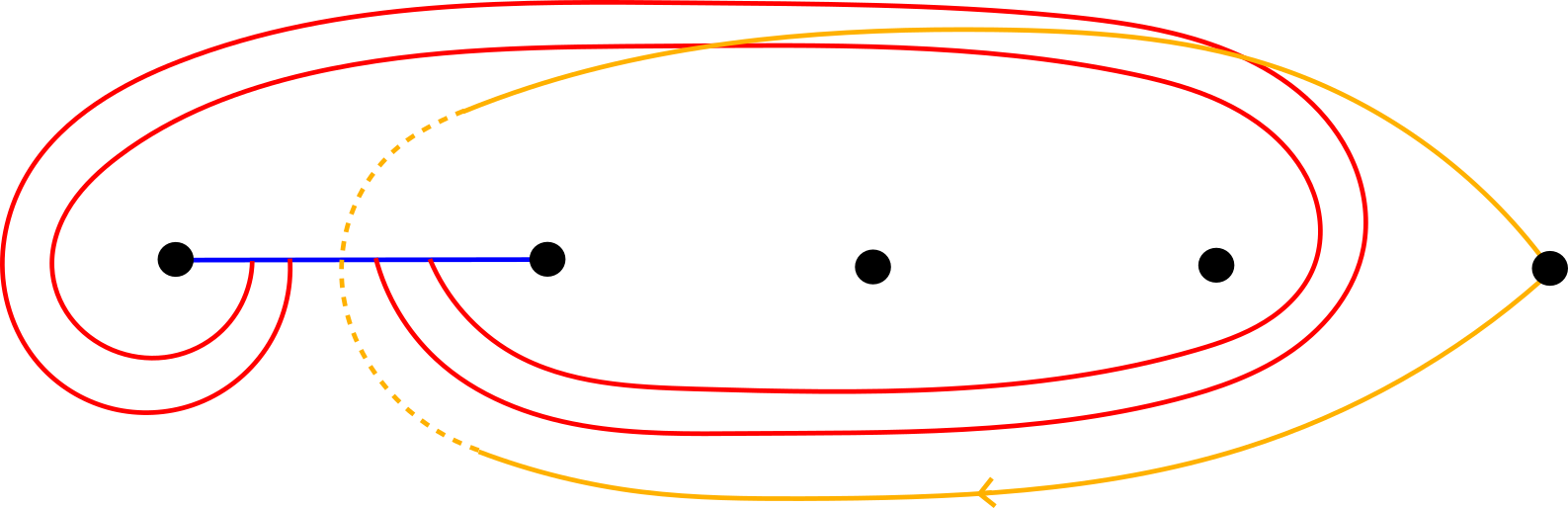}
                % \put(32, 50){\textcolor{blue}{$\alpha$}}
            \put(-5, 18){\textcolor{red}{$\beta$}}
           \put(95, 22){\textcolor[RGB]{255,177,0}{$\Gamma$}}
            \end{overpic}
        % \label{figure:4 disks become 5 disks}
        \end{subfigure}
        \begin{subfigure}{.45\linewidth}
        \includegraphics[width=0.9\linewidth]{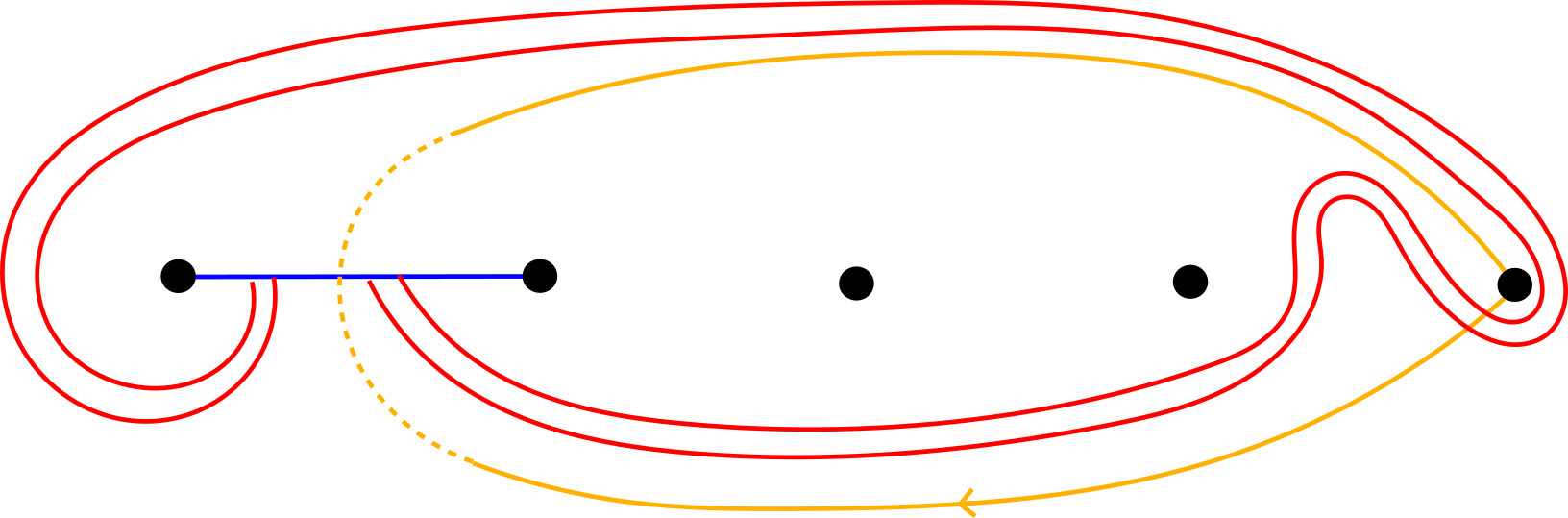}

        % \label{figure:4s become 5s part two}
        \end{subfigure}
        \caption{The loop $\Gamma$ is disjoint from $\beta$ except for intersecting the ``band'' of $\beta$-components of the nested 4-disks. After performing the push-map $\Gamma$, the band of 4-disks is replaced with a band of 5-disks.}
        \label{figure:4s become 5s both}
    \end{figure}

Before giving the proof, we emphasize that the loop $\Gamma$ in the statement of Lemma~\ref{lemma:4s become 5s} need not be simple.  

\begin{proof}
Since the only points of intersection between $\Gamma$ and $\beta$ lie in $\Gamma_F$, the effect on $\beta$ of pushing along $\Gamma$ is to push $p_5$ ``into'' each 4-disk, as shown in Figure~\ref{figure:4s become 5s both}, while any other disks remain unchanged.  As we have already observed, each 4-disk in the disk sequence of $\Phi$ is necessarily sign-changing, and hence the newly created 5-disks must also be sign-changing.
\end{proof}

As above, we consider a loop $\Gamma$ in $D_5$ based at $p_5$, and let $\Gamma_F$ denote the subarc of $\Gamma$ traveling from its final point of intersection with $\alpha$ back to $p_5$.  Also, we refer to a disk whose boundary is the union of a single subarc of $\alpha$ together with a single subarc of $\Gamma$, where $\Gamma$ is a loop, as an $\algam$ disk. Finally, we say that $\Gamma$ is {\it simple modulo 4-disks} if any self-intersection points of $\Gamma$ occur as a point of intersection of $\Gamma_F$ with the boundary of an $\algam$ 4-disk.

Recall that $\Gamma_I$ denotes the subarc of $\Gamma$ from $p_5$ to its first point of intersection with $\alpha$.  If $\Gamma$ is simple modulo 4-disks, we will say it is {\it properly simple modulo 4-disk} if the following three conditions hold: (1) each $\algam$ disk is equivalent to one of the five disk types shown in Figure~\ref{figure:wn-disks-five}, (2) there is some $\algam$ disk bounding a 3-disk containing $p_1$, $p_2$ and $p_3$, and (3) the arc $\Gamma_I$ does not intersect $\beta_*^3$.

\begin{lemma}
\label{lemma:alternating}
    Let $\Gamma$ be a loop in $D_5$ based at $p_5$ that is properly simple modulo 4-disks, let $q_1, \cdots, q_m$ denote the points of intersection of $\Gamma$ with $\beta_*^3$ in order as we travel along $\beta_*^3$, and let $\sign(q_i)$ denote the sign of the intersection of $\Gamma$ with $\beta_*^3$ at $q_i$.  Then for all $1 \leq i < m $, we have that either $\sign(q_i) = -\sign(q_{i+1})$, or else the corresponding disks $\Delta_i$ and $\Delta_{i+1}$ are both 4-disks.
\end{lemma}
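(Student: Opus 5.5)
The plan is to read the signs along $\beta_*^3$ off the disks that $\Gamma$ cuts out, in the same spirit as the claim inside the proof of Lemma~\ref{lemma:characterizing sign-changing}. Here the roles are exchanged: $\beta_*^3$ is a fixed short arc, and the consecutive intersection points $q_i, q_{i+1}$ of $\Gamma$ with $\beta_*^3$ bound, together with the subarc of $\Gamma$ joining them, a region $\Delta_i$. I interpret $\Delta_i$ as the $\algam$ disk, or the disk cut off by $\Gamma$ from $D_5$ minus $\alpha \cup \beta_*^3$, whose $\Gamma$-side connects $q_i$ to $q_{i+1}$.

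First, I would use that $\Gamma$ is simple modulo 4-disks. Away from $\Gamma_F$ crossing the boundaries of $\algam$ 4-disks, consecutive crossings with $\beta_*^3$ are joined by an embedded subarc of $\Gamma$. The first real step is to observe the following for a simple arc $\gamma \subset \Gamma$ that leaves $\beta_*^3$ at $q_i$ and returns at $q_{i+1}$ with the two points consecutive along $\beta_*^3$. If $\gamma$ returns from the same side it left, the crossings have opposite signs. If it returns from the opposite side, then $\gamma$ has wound around an endpoint of $\beta_*^3$, which is $p_3$ or the boundary point $p_*$. Since $p_*$ lies on $\partial D_5$, winding around it is impossible. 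So every same-sign pair forces $\gamma \cup [q_i,q_{i+1}]$ to bound a disk containing $p_3$, and to pass around $p_3$, in such a way that the segment of $\beta_*^3$ from $q_{i+1}$ to $p_3$ lies inside.

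Second, I would use the properly simple hypotheses to classify which disks can occur. Each $\algam$ disk is one of the five types $\delta_1,\dots,\delta_5$ of Figure~\ref{figure:wn-disks-five}. By condition (3), $\Gamma_I$ does not meet $\beta_*^3$, so the initial segment contributes no crossing. I would then go through the five types case by case and check how each meets the fixed arc $\beta_*^3$ from $p_*$ to $p_3$. I expect that every type except the 4-disk meets $\beta_*^3$ either not at all or in a pair of crossings of opposite sign, because the $\Gamma$-arc enters and exits through the same side of $\beta_*^3$. The 3-disk $\{p_1,p_2,p_3\}$ guaranteed by condition (2) is the disk that goes around $p_3$, and the check is that it still enters and exits on the same side, because its boundary also encircles $p_*$'s side appropriately.

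Third, I would treat the exceptional strands. The only self-intersections come from $\Gamma_F$ crossing $\algam$ 4-disks, and the 4-disks are nested into a band as in Lemma~\ref{lemma:4s become 5s}. Consecutive crossings of $\beta_*^3$ coming from parallel strands of this band can therefore have equal sign, and in that situation both adjacent disks $\Delta_i$ and $\Delta_{i+1}$ are 4-disks, which is the allowed alternative in the statement.

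The main obstacle is the second step: pinning down precisely which disk $\Delta_i$ is associated to the pair $q_i,q_{i+1}$, and verifying that the same-side/opposite-side dichotomy is exhausted by the five pictured types together with the 4-disk band. I would first try to settle this by drawing $\beta_*^3$ directly on Figure~\ref{figure:wn-disks-five} and reading off the signs for each type.
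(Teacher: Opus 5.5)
There is a genuine gap. Your proposal never uses the feature that actually forces alternation: $\Gamma$ is a loop that must return to $p_5$. Without it the statement is false. An arc spiralling around $p_3$ meets $\beta_*^3$ in points that are consecutive along $\beta_*^3$ and have equal sign. No inspection of the disk types $\delta_1,\dots,\delta_5$ can detect this, since such a spiral portion need not meet $\alpha$ at all.

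The individual steps also break down.
\begin{itemize}
\item Your first step's dichotomy is incomplete. Adjacency of $q_i,q_{i+1}$ along $\beta_*^3$ only forbids $\Gamma$ from crossing the segment $\beta_i\subset\beta_*^3$ between them. The subarc $\gamma_i$ of $\Gamma$ joining them can switch sides by crossing $\beta_*^3$ elsewhere, on the $p_*$ side of $q_i$ or the $p_3$ side of $q_{i+1}$. So ``returns from the opposite side $\Rightarrow$ winds around $p_3$'' is false, and ``same sign $\Rightarrow$ the disk contains $p_3$'' does not follow.
\item Your second step reads signs off disk by disk. But two points consecutive along $\beta_*^3$ typically lie on different strands of $\Gamma$, not at the two ends of one $\algam$ disk. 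The sign pattern along $\beta_*^3$ is global and is not determined by the list of disk types.
\item Your third step asserts, rather than proves, that equal signs occur only in the 4-disk band.
\end{itemize}

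The missing idea is the trapping (Jordan curve) argument the paper uses. Since $p_5$ lies in no $\algam$ disk, treat it as a point on the boundary. Suppose $\sign(q_i)=\sign(q_{i+1})$. Then $\beta_i\cup\gamma_i$ is a closed curve whose corners at $q_i$ and $q_{i+1}$ turn in opposite directions. Hence one of the two ends of $\Gamma$ leaving this curve at $q_i$ or $q_{i+1}$ starts inside the disk it bounds; the paper follows $\Gamma$ past $q_{i+1}$. That end must reach $p_5$, which lies outside, so $\Gamma$ must cross $\beta_i\cup\gamma_i$ again.
\begin{itemize}
\item A crossing of $\beta_i$ contradicts adjacency along $\beta_*^3$.
\item A crossing of $\gamma_i$ is a self-intersection of $\Gamma$. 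The hypothesis permits these only where $\Gamma_F$ crosses the boundary of an $\algam$ 4-disk, which gives the 4-disk alternative.
\end{itemize}
For a genuinely simple loop this is just the fact that consecutive crossings along $\beta_*^3$ alternately enter and exit the region bounded by $\Gamma$, so their signs alternate.
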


In other words, Lemma~\ref{lemma:alternating} says that the sign of the intersection of $\Gamma$ with $\beta_*^3$ is alternating as we travel along $\beta_*^3$, unless the intersections correspond to two adjacent 4-disks; for convenience we refer to this property as being {\it alternating modulo 4-disks}.  

\begin{proof}
Since $\Gamma$ is properly simple modulo 4-disks, it follows that $p_5$ is not contained in any $\algam$ disks.  Hence we can view $p_5$ as a basepoint on the boundary of the disk.  

Let $q_i$ and $q_{i+1}$ be two points in $\Gamma \cap \beta_*^3$ that are adjacent along $\beta_*^3$, and suppose that $\sign(q_i) = \sign(q_{i+1})$.  Let $\beta_i$ denote the subarc of $\beta_*^3$ from $q_i$ to $q_{i+1}$, and similarly let $\gamma_i$ denote corresponding subarc of $\Gamma$ .  Without loss of generality, we may assume that $\beta_i \cup \gamma_i$ is a circle containing at most 3 punctures, contained in the embedded copy of $D_4 \subset D_5$.  

We now choose a point $x$ that lies along $\Gamma$ and between $q_{i+1}$ and $p_5$ if $\Gamma$ is simple, or otherwise between $q_{i+1}$ and the first point of self-intersection of $\Gamma$ as we travel along $\Gamma$; see Figure~\ref{figure:beta_i gamma_i}.  The point $x$ necessarily lies in the interior of $\beta_i \cup \gamma_i$.  

\begin{figure}[htpb!]
       \centering
        \begin{overpic}[width=0.7\linewidth]{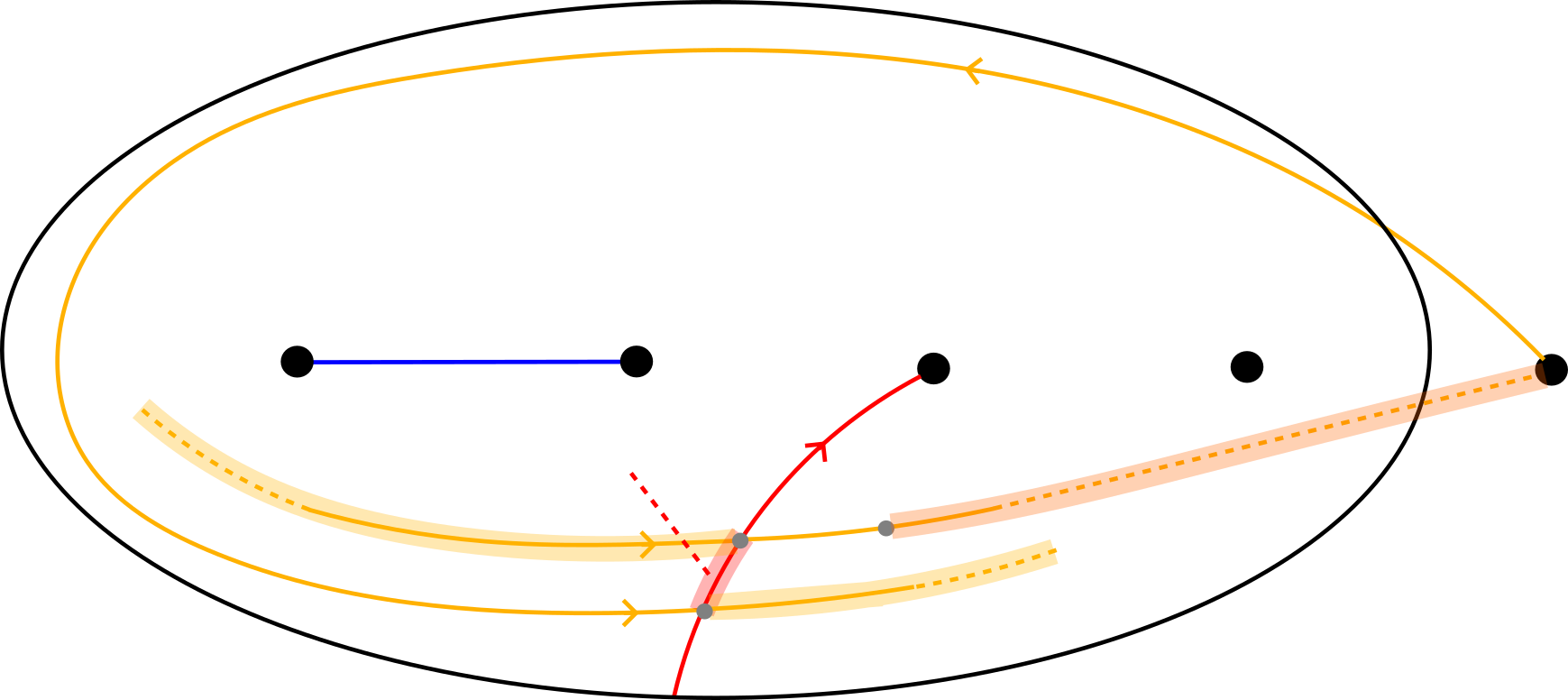}
   \put(24,13){\textcolor[RGB]{255,177,0}{$\gamma_i$}}
   \put(55,4){\textcolor[RGB]{255,177,0}{$\gamma_i$}}
      \put(69,16){\textcolor[RGB]{255,100,0}{$\gamma_x$}}
\put(37,15){\textcolor{red}{$\beta_i$}}
\put(45,3){\textcolor[RGB]{128,128,128}{$q_i$}}
\put(47.5,7.5){\textcolor[RGB]{128,128,128}{$q_{i+1}$}}
\put(56,12){\textcolor[RGB]{128,128,128}{$x$}}
\put(97,18){{$p_5$}}
        \end{overpic}
        \caption{The disk bounded by $\beta_i \cup \gamma_i$.} 
        \label{figure:beta_i gamma_i}
    \end{figure}

Now, let $\gamma_x$ denote the subarc of $\Gamma$ from $x$ to $p_5$.  Then $\gamma_x$ necessarily intersects $\beta_i \cup \gamma_i$ in at least one point, which we denote by $y$.  If $y \in \beta_i$ this contradicts our assumption that $q_i$ and $q_{i+1}$ are adjacent along $\beta_*^3$.  If $y \in \gamma_i$, then this contradicts our assumption that $\Gamma$ is properly simple modulo 4-disks.  This completes the proof that $\Gamma$ is alternating module 4-disks with respect to $\beta_*^3$.
\end{proof}

The next lemma will be crucial for identifying braids that satisfy the parity condition.   
\begin{lemma}
\label{lemma:properly simple modulo 4-disks}
    Let $\Gamma$ be a loop in $D_5$ based at $p_5$ that is properly simple modulo 4-disks. Then $\Gamma$ satisfies the parity condition.
    % and such that 
\end{lemma}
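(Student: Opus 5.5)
The plan is to reduce the parity condition to a check on puncture sets, using the general sign criterion from the proof of Lemma~\ref{lemma:characterizing sign-changing}. Here ``$\Gamma$ satisfies the parity condition'' is read as a statement about the push-map along $\Gamma$ in $\B_5$, that is, about the disk sequence of $\alpha$ and $\beta=(\beta_*^3)\Gamma$.

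First, I would note that the claim inside the proof of Lemma~\ref{lemma:characterizing sign-changing} never used $n=4$. That claim says a disk $\Delta$ in a disk sequence is sign-preserving if and only if $\mcP(\Delta)$ contains exactly one of the endpoints $p_1,p_2$ of $\alpha$. The minimal-position argument excluding $\mcP(\Delta)\in\{\{p_1\},\{p_2\},\{p_1,p_2\}\}$ is likewise independent of $n$. Hence in $D_5$ the parity condition for $\Delta$ is equivalent to a purely combinatorial statement: $|\mcP(\Delta)\cap\{p_3,p_4,p_5\}|$ is odd when $\Delta$ contains both or neither of $p_1,p_2$, and even when it contains exactly one. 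So it suffices to list the possible puncture sets of disks in the disk sequence of $(\beta_*^3)\Gamma$.

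Second, I would describe that disk sequence in terms of the $\algam$ disks.
\begin{itemize}
\item Since $\Gamma_I$ misses $\beta_*^3$ (condition (3)), and $p_5$ lies in no $\algam$ disk (so it may be treated as a boundary point, as in the proof of Lemma~\ref{lemma:alternating}), pushing $\beta_*^3$ along $\Gamma$ drags parallel copies of subarcs of $\Gamma$ out of each point of $\Gamma\cap\beta_*^3$.
\item Each resulting $\alpha$-$\beta$ disk is then either an original disk or parallel to an $\algam$ disk.
\item I would use Lemma~\ref{lemma:alternating} to pin down how consecutive strands close up. Because the signs of $\Gamma\cap\beta_*^3$ alternate modulo 4-disks, the parallel strands pair up into disks that are equivalent to $\algam$ disks, rather than into disks swallowing extra punctures between strands.
\end{itemize}
By condition (1), each such disk is equivalent to one of $\delta_1,\dots,\delta_5$ of Figure~\ref{figure:wn-disks-five}. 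Its puncture set is therefore one of $\{p_3\}$, $\{p_4\}$, $\{p_2,p_3\}$, $\{p_1,p_2,p_3\}$ or $\{p_1,p_2,p_3,p_4\}$, as in Lemma~\ref{lemma:only 4}; condition (2) guarantees that the $3$-disk type actually occurs. The first four sets pass the combinatorial check above.

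Third, I would handle the 4-disks. The only self-intersections of $\Gamma$ are where $\Gamma_F$ crosses the boundaries of $\algam$ 4-disks. Thus the strands of $(\beta_*^3)\Gamma$ parallel to these 4-disks are crossed by $\Gamma_F$ exactly as in the hypothesis ``disjoint modulo 4-disks'' of Lemma~\ref{lemma:4s become 5s}, and $p_5$ is pushed into each of them. By the argument of Lemma~\ref{lemma:4s become 5s}, every such disk becomes a sign-changing 5-disk with puncture set $\{p_1,\dots,p_5\}$. This set contains both $p_1,p_2$ and three of $p_3,p_4,p_5$, so it satisfies parity. Since no other disk type remains, $\Gamma$ satisfies the parity condition.

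The main obstacle is the second step: making rigorous that every disk of $(\beta_*^3)\Gamma$ is parallel to a single $\algam$ disk. One must rule out disks formed from strands coming from two different points of $\Gamma\cap\beta_*^3$ that enclose additional punctures. I would attack this with the sign alternation of Lemma~\ref{lemma:alternating} together with a minimal-position argument in the style of Proposition~\ref{prop:ypush_minimalposition}.
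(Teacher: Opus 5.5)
There is a genuine gap. It sits exactly at the step you flagged, but the fix you propose (showing that every disk is parallel to a single $\algam$ disk) aims at a statement that is stronger than what holds. Since $\beta_*^3\cap\alpha=\emptyset$, there are no original disks: every vertex of every disk of $(\beta_*^3)\Gamma$ is new. The $\beta$-component of such a disk takes one of two forms:
\begin{itemize}
\item a single new subarc, parallel either to a $\Gamma$-subarc between consecutive crossings with $\alpha$ or to the tip around $p_5$;
\item a composite of three pieces. It runs back along $\gamma_1$, the $\Gamma$-subarc from $q_i$ to its next crossing with $\alpha$. It then follows the original subarc of $\beta_*^3$ from $q_i$ to $q_{i+1}$, and finally goes out along $\gamma_2$, the $\Gamma$-subarc from $q_{i+1}$ to $\alpha$.
\end{itemize}
Composite disks arise between the pushed strands at consecutive points of $\Gamma\cap\beta_*^3$, so they cannot be ruled out. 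Moreover, $\gamma_1$ and $\gamma_2$ in general lie on inequivalent $\algam$ disks, so the composite need not be equivalent to any single $\algam$ disk. The paper's case analysis explicitly allows composites that are sign-preserving $2$-disks other than the $\delta$-types. It also allows composite $5$-disks, which contain $p_5$ and so are parallel to no $\algam$ disk, because $p_5$ lies in none. Lemma~\ref{lemma:alternating} plus a minimal-position argument will therefore not give your step two as stated.

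What the paper does instead is check parity for the composites directly.
\begin{itemize}
\item Both $\gamma_1$ and $\gamma_2$ are oriented from $\beta_*^3$ to $\alpha$.
\item If neither lies on a $4$-disk, the composite misses $p_5$. It is then either of a $\delta$-type or a sign-preserving $2$-disk.
\item If $\gamma_1$ lies on a $4$-disk, then $\gamma_2$ lies on the $\algam$ disk adjacent to it along $\beta_*^3$. Condition (2) supplies an $\algam$ disk around $p_1,p_2,p_3$, and $\Gamma$ cannot self-intersect inside it. This forces the adjacent disk to be of type $\{p_1,p_2,p_3\}$ or $\{p_4\}$.
\item Lemma~\ref{lemma:alternating} then leaves only a few configurations. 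These give a composite $\{p_4\}$, a composite $\{p_1,p_2,p_3\}$, or a sign-changing $5$-disk.
\end{itemize}
This is where condition (2) does real work. In your outline it only ``guarantees that the $3$-disk type occurs'', which is irrelevant to a condition quantified over all disks; that is a sign the hard case is missing.

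Separately, fix the combinatorial criterion in your first step. A disk containing exactly one of $p_1,p_2$ is sign-preserving, so parity requires $|\mcP(\Delta)| = 1+|\mcP(\Delta)\cap\{p_3,p_4,p_5\}|$ to be even. The correct criterion is therefore that $|\mcP(\Delta)\cap\{p_3,p_4,p_5\}|$ is odd in every case. As you stated it, $\{p_2,p_3\}$ would fail, which contradicts your later claim that it passes.
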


\begin{figure}[htpb!]
       \centering
        \begin{overpic}[width=0.25\linewidth]{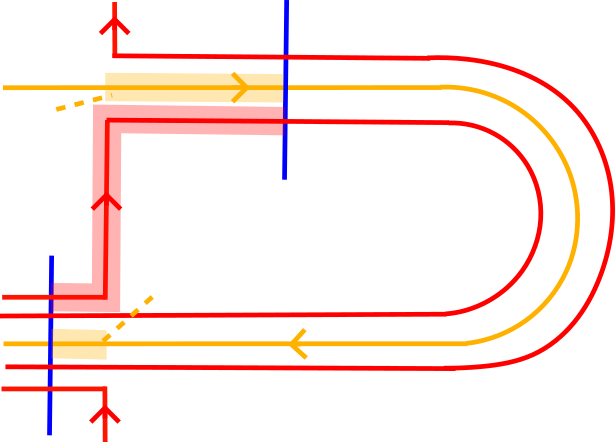}
          \put(4,48){\textcolor[RGB]{255,177,0}{$\gamma_1$}}
          \put(24,26){\textcolor[RGB]{255,177,0}{$\gamma_2$}} 
           \put(45,75){\textcolor{blue}{$\alpha$}}
           \put(7,-4){\textcolor{blue}{$\alpha$}}
           \put(17.5,75.5){\textcolor{red}{$\beta$}}
        \end{overpic}
        \caption{The case where $\Delta$ is formed using two new subarcs and one original subarc.}
        \label{figure:gamma1 gamma2}
    \end{figure}

\begin{proof}
Consider a disk $\Delta$ in the disk sequence of $\Gamma$. In what follows we will use the terminology of {\it original} and {\it new} subarcs and vertices, as introduced in the proof of Proposition~\ref{prop:ypush_minimalposition}.

% In particular, consider the application of the push-map in stages, where ... 
  
First, we note that both vertices of the disk $\Delta$ are new.  Setting $\beta = (\beta_*^3) \Gamma$, we have moreover that the $\beta$-component of $\Delta$ is either a single new subarc, or it is the union of two new subarcs with an original subarc; see Figure~\ref{figure:gamma1 gamma2}.  

In the former case, there are three possibilities:  the disk $\Delta$ corresponds to an original $\algam$ disk that is not a 4-disk, $\Delta$ is a 1-disk containing $p_5$, or, by applying Lemma~\ref{lemma:4s become 5s}, $\Delta$ is a sign-changing 5-disk.  For each of these, $\Delta$ satisfies the parity condition. 

By part (3) of the definition of being properly simple modulo 4-disks, $\beta$ is the union of two new subarcs with an original subarc.  Let $\gamma_1$ and $\gamma_2$ denote the two subarcs of $\Gamma$ that correspond to the two new subarcs of $\beta$, where $\gamma_1$ precedes $\gamma_2$ in $\Gamma$.    Suppose first that $\gamma_1$ and $\gamma_2$ both occur as subarcs of an $\algam$ disks.  If $\gamma_1$ and $\gamma_2$ are subarcs of equivalent $\algam$ disks, then $\Delta$ corresponds to an original $\algam$-disk that is not a 4-disk, and hence $\Delta$ satisfies the parity condition. 

Suppose next that $\gamma_1$ and $\gamma_2$ are subarcs of inequivalent $\algam$ disks. We begin by observing that both $\gamma_1$ and $\gamma_2$ are necessarily oriented from $\beta_*^3$ to $\alpha$.  If neither $\gamma_1$ nor $\gamma_2$ occur as subarcs of a 4-disk, then $\Delta$ does not contain $p_5$.  Morever, $\Delta$ is equivalent to one of the disks shown in Figure \ref{figure:wn-disks-five} or is another sign-preserving 2-disk and hence satisfies the parity condition.  

Referring again to Figure~\ref{figure:gamma1 gamma2}, we note that if $\gamma_1$ occurs as a subarc of a 4-disk, then $\gamma_2$ must occur as part of an $\algam$ disk that is adjacent to the 4-disk as we travel along $\beta_*^3$.     

Since there is a 3-disk containing $p_1, p_2, p_3$ occurring as an $\algam$ disk, and since no self-intersections of $\Gamma$ can occur in this 3-disk, we see that $\gamma_2$ occurs either part of such a 3-disk or as part of a 1-disk containing $p_4$; see, respectively, the arcs $\delta_3$ and $\delta_2$ in Figure~\ref{figure:wn-disks-five}. By Lemma \ref{lemma:alternating}, the possible configurations that can occur are shown in Figure~\ref{fig:cases-fillers}.  Hence $\Delta$ is one of the following: a 1-disk containing $p_4$, a 3-disk containing $p_1, p_2,$ and $p_3$,  or a sign-changing 5-disk; each of these satisfy the parity condition, and the proof is complete. \end{proof}

\begin{figure}[htpb!]
    \centering
    \begin{overpic}[width=1\linewidth]{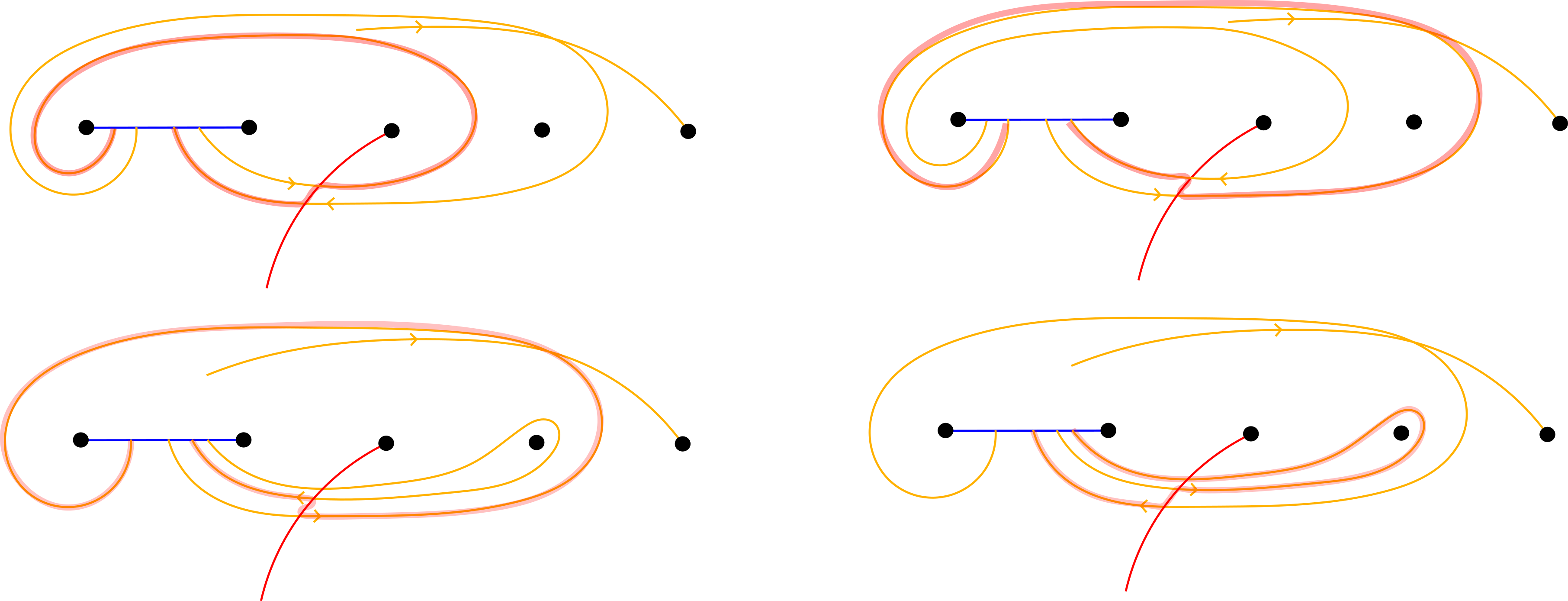}
          % \put(4,48){\textcolor[RGB]{255,177,0}{$\gamma_1$}}
          % \put(24,26){\textcolor[RGB]{255,177,0}{$\gamma_2$}} 
           % \put(45,75){\textcolor{blue}{$\alpha$}}
           % \put(7,-4){\textcolor{blue}{$\alpha$}}
           % \put(17.5,75.5){\textcolor{red}{$\beta$}}
        \end{overpic}
    \caption{The four possible configurations: in each case $\gamma_1$ and $\gamma_2$ are highlighted in red, joined by an original subarc of $\beta$.}
    \label{fig:cases-fillers}
\end{figure}

We emphasize that throughout what follows, the notation $\Gamma_1$ always refers to the particular push-map shown in Figure~\ref{figure:gamma1}, while $\Gamma$ refers to a push-map that generally depends on some other element $\Phi \in K_4$.

\begin{proposition}
\label{prop:four-removal}
    Let $\Phi\in \B_4$, and suppose that $\Phi$ can be written as a proper product $\Phi = \Phi' \cdot \Gamma_1$.  Then there exists a push-map $\Gamma \in K_5$ such that $f(\Phi) \cdot \Gamma$ is a proper product in $\B_5$ and such that both $f(\Phi) \cdot \Gamma$ and $\Gamma$ satisfy the parity condition. Furthermore we may choose $\Gamma$ so that $\iota(\beta_*^3\cdot (\Phi \cdot \Gamma), \alpha) \neq \iota(\beta_*^3 \cdot \Gamma, \alpha)$.
\end{proposition}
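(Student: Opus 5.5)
The plan is to build $\Gamma$ directly from the arc $\beta = (\beta_*^3)\Phi$, so that the hypotheses of Lemmas~\ref{lemma:4s become 5s}, \ref{lemma:alternating} and \ref{lemma:properly simple modulo 4-disks} are satisfied.

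\emph{Step 1: the disks of $\Phi$.} Since $\Phi = \Phi'\cdot\Gamma_1$ is a proper product, Proposition~\ref{prop:ypush_minimalposition} puts $\beta$ in minimal position with $\alpha$. Lemma~\ref{lemma:only 4} then lists the possible $\mcP(\Delta)$ for disks $\Delta$ in the disk sequence of $\Phi$. Every such disk satisfies the parity condition except possibly the 4-disks, which are sign-changing. By Lemma~\ref{lemma:piecewise disjoint}, the $\beta$-component of each individual disk is equivalent to one of the arcs $\delta_1,\dots,\delta_5$ of Figure~\ref{figure:wn-disks-five}.

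\emph{Step 2: construct $\Gamma$.} Place $p_5$ near $\partial D_4$, close to $p_*$ and outside every $\alpha$-$\beta$ disk. Let $\Gamma$ travel from $p_5$ along a parallel copy of $\beta$, tracing around $\alpha$ in a thin regular neighborhood of $\beta\cup\alpha$. It then returns to $p_5$ by a final component $\Gamma_F$ that cuts once across the nested band of $\beta$-components of the 4-disks (Figure~\ref{figure:4s become 5s both}).

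\emph{Step 3: check the hypotheses.} First, $\Gamma$ is disjoint from $\beta$ modulo 4-disks, so Lemma~\ref{lemma:4s become 5s} applies. It turns the 4-disks of $f(\Phi)\cdot\Gamma$ into sign-changing 5-disks and leaves all other disks unchanged. Combined with Lemma~\ref{lemma:only 4}, this gives the parity condition for $f(\Phi)\cdot\Gamma$.

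Second, $\Gamma$ is properly simple modulo 4-disks:
\begin{enumerate}
\item its $\alpha$-$\Gamma$ disks are parallel to the $\alpha$-$\beta$ disks, hence of the five types $\delta_1,\dots,\delta_5$;
\item the $\{p_1,p_2,p_3\}$ 3-disk coming from $\Gamma_1$ appears;
\item $\Gamma_I$ starts near $\partial D$ and can be routed off $\beta_*^3$;
\item the only self-intersections are those of $\Gamma_F$ with the 4-disk band.
\end{enumerate}
Lemma~\ref{lemma:properly simple modulo 4-disks} then gives the parity condition for $\Gamma$.

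Properness of $f(\Phi)\cdot\Gamma$ needs two checks. Any bigon-forming polygon would produce a bigon between $\beta$ and $\alpha$ or between $\Gamma_1$-type arcs, contradicting minimality of $\beta$. Condition 2 concerns $\beta'$, the innermost disk about $p_5$; since no disk of $\Phi$ contains $p_5$, $\beta'=\emptyset$ and the condition holds vacuously.

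\emph{Step 4: the intersection inequality.} Compare $\iota(\alpha,(\beta_*^3)(\Phi\cdot\Gamma))$ with $\iota(\alpha,(\beta_*^3)\Gamma)$. The first count is $\iota(\alpha,\beta)$ plus the crossings created by pushing $\beta$ along $\Gamma$. The second is the number of crossings of $\Gamma$ with $\beta_*^3$ counted with new subarcs. If these happen to agree, modify $\Gamma$ by adding an extra loop around the $\delta_2$-type 1-disk containing $p_4$, or by composing once more with $\Gamma_1$. This changes one count and not the other, while preserving properly-simple-modulo-4-disks.

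\emph{Main obstacle.} I expect the hardest part to be Step 3: proving that the constructed $\Gamma$ has no bigon-forming polygons (generalized trigons and rectangles with basepoint) with $\alpha$ and $\beta$, and that its self-intersections really are confined to $\Gamma_F$ meeting the 4-disk band. My first attempt would be a case check against the five disk types of Figure~\ref{figure:wn-disks-five}, in the same way as the proof of Lemma~\ref{lemma:properly simple modulo 4-disks}.
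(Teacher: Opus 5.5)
\textbf{Steps 1--3.} These match the paper's construction in substance. Your ``parallel copy of $\beta$'' is the paper's path $\gamma_1$ from $p_5$ to a point $q$ inside the innermost 4-disk with $\iota(\gamma_1,\beta)=0$. Your return arc is the paper's $\gamma_2$, which is disjoint from $\alpha$ and crosses each 4-disk $\beta$-component once. Parity then comes from Lemma~\ref{lemma:4s become 5s} for $f(\Phi)\cdot\Gamma$ and from Lemma~\ref{lemma:properly simple modulo 4-disks} for $\Gamma$.

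You leave one requirement implicit. As in the paper, the turning point must lie in the innermost 4-disk but outside every $k$-disk of the sequence with $k\le 3$. Otherwise the return arc crosses the $\beta$-component of a smaller disk, and that disk gains $p_5$. For example, a $\{p_1,p_2,p_3\}$-disk would become a sign-changing disk with four punctures. Then $\Gamma$ is not disjoint from $\beta$ modulo 4-disks, and parity fails. A parallel copy of $\beta$ run ``too far'' can do exactly this, so the stopping point has to be specified.

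\textbf{Step 4 is a genuine gap.} The two moves you offer do not work as stated.
\begin{enumerate}
\item ``Composing once more with $\Gamma_1$'' is not admissible. $\Gamma_1$ pushes $p_4$, so $\Gamma\cdot\Gamma_1\notin K_5$. Replacing $\Phi$ by $\Phi\cdot\Gamma_1$ changes the braid the proposition is about, and non-triviality of $\rho_4(\Phi\cdot\Gamma_1)$ says nothing about $\rho_4(\Phi)$.
\item ``Adding an extra loop around the $p_4$ 1-disk'' is not placed anywhere on $\Gamma$. You also give no reason why it changes exactly one count. If such a loop crosses $\beta$, it changes the disk sequence of $(\beta)\Gamma$ and can destroy the parity from Step 3; for instance, a disk containing $\{p_4,p_5\}$ contains neither $p_1$ nor $p_2$, so it is sign-changing with two punctures.
\end{enumerate}

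\textbf{The missing idea.} The two counts can be decoupled as follows. A pushed arc acquires new intersections with $\alpha$ only from $\alpha$-crossings of the loop that come \emph{after} the point where the arc meets the loop. Since $\beta$ meets $\Gamma$ only on the band-crossing part $\gamma_2$, insert a detour at $q$, after $\gamma_1$ and before $\gamma_2$, that avoids $\beta$ and crosses $\alpha$. This is the paper's condition that along $\gamma_2'$ the first $\beta$-crossing comes after the last $\alpha$-crossing.
\begin{itemize}
\item $\iota(\alpha,(\beta)\Gamma')$ is unchanged, and so is the disk sequence.
\item Fingers of $\beta_*^3$ that start on $\gamma_1$ are dragged through the detour's new $\alpha$-crossings.
\end{itemize}
You must still show that such a detour exists and that it actually raises $\iota(\alpha,(\beta_*^3)\Gamma')$ rather than being isotoped away. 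The paper builds it from an arc $\gamma'$ from $q$ to a point $r\in\alpha$ avoiding $\beta$ and meeting $\gamma_1$ only at $q$. It adds an arc $\gamma''$ back to $q$ giving a nontrivial loop in $D_4\setminus(\beta\cup\gamma_1\cup\gamma')$. It then splits into cases according to whether $\gamma'\cup(\gamma_1)_F$ cuts off a 4-disk with $\alpha$. Your Step 4 needs this mechanism, or an equivalent one, to be a proof.
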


Before we begin the proof of Proposition~\ref{prop:four-removal}, we recall that an $\alpha$-$\gamma$ disk is a disk whose boundary is the union of a single subarc of $\alpha$ together with a single subarc of $\gamma$, where $\gamma$ is a loop or an arc.  In the case where $\gamma$ is a union of two or more subarcs of a larger loop or arc $\Gamma$, we will also refer to such a disk as an $\algam$ disk for convenience.

\begin{proof}
Suppose that the braid $\Phi \in \B_4$ does not satisfy the parity condition.  By Lemma~\ref{lemma:only 4} there must be at least one disk $\Delta$ in its disk sequence with $\mcP(\Delta) = \{ p_1, p_2, p_3, p_4 \}$.  Moreover, any 4-disks that occur in its disk sequence are necessarily nested in $D_4$.  Consider the image $f(\Phi)$ of our braid in $\B_5$.  We will construct a push-map $\Gamma \in K_5$ whose effect will be to replace each of the 4-disks in the disk sequence of $f(\Phi)$ with a 5-disk while preserving every other disk.

Let $\beta$ denote the arc $(\beta_*^3) f(\Phi)$ in the disk $D_5$, and choose a point $q$ in the interior of the innermost 4-disk of $\beta$ such that $q$ is not contained in a $k$-disk in the disk sequence of $f(\Phi)$ for any $k \leq 3$.  Since the arc $\beta$ is simple, its complement in the disk $D_5$ is connected, and hence we can choose a simple path $\gamma_1$ from $p_5$ to $q$ such that $\iota(\gamma_1, \beta) = 0$; see Figure~\ref{fig:construction-example} for an example.  We note that $\gamma_1$ necessarily intersects $\alpha$, since $q$ lies in the interior of the disk $\Delta$ and $p_5$ does not. 

\begin{figure}[htpb!]
    \centering
    \begin{overpic}[width=0.7\linewidth]{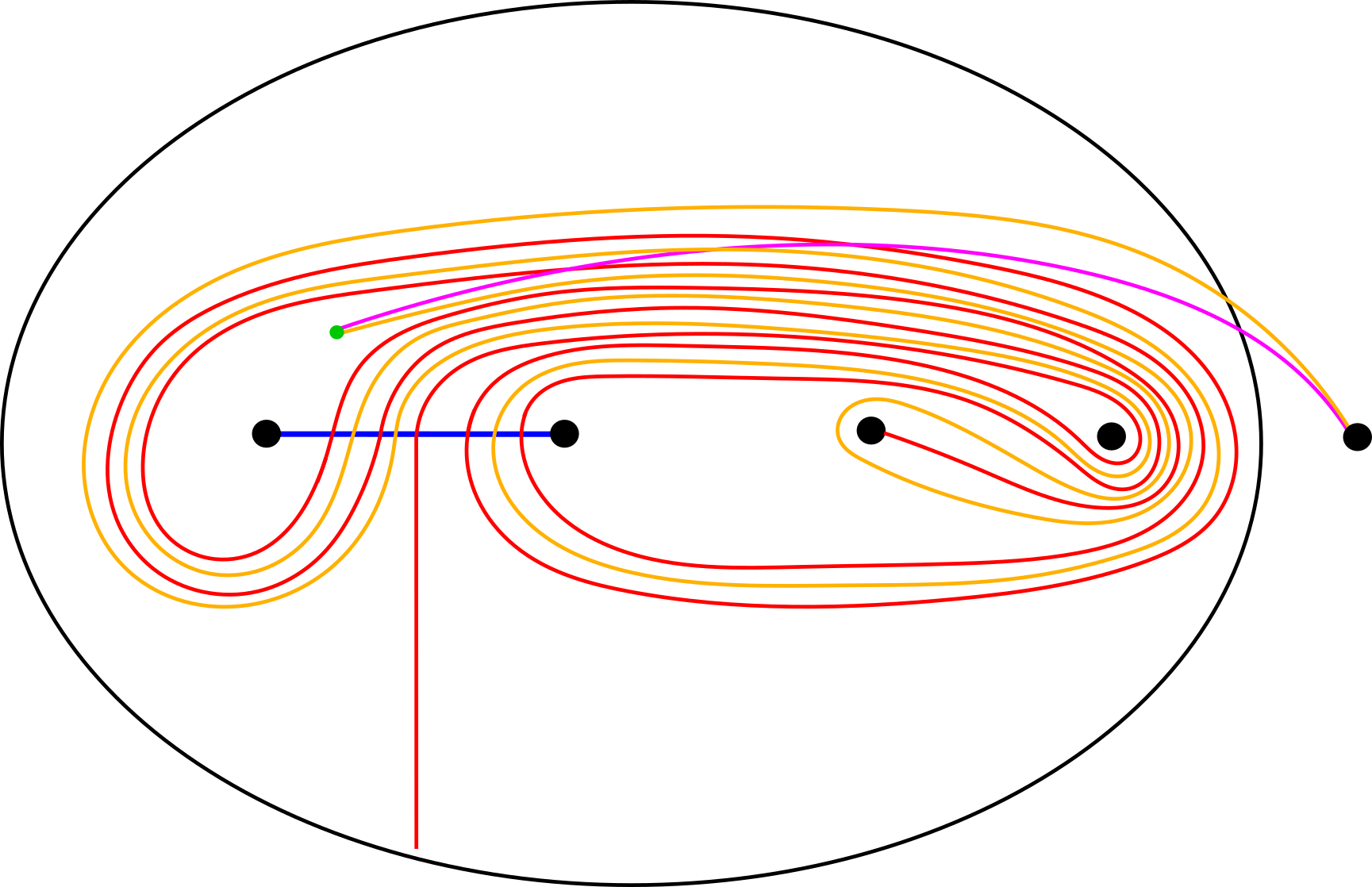}
                \put(98,28){$p_5$}
                \put(78,4){$D_4\subseteq D_5$}
 \put(31, 15){\textcolor[RGB]{200,0,0}{$\beta$}}
                \put(22, 39){\textcolor[RGB]{0,200,0}{$q$}}
                \put(92.5, 35){\textcolor[RGB]{255,0,255}{$\gamma_2$}}
                \put(65, 27){\textcolor[RGB]{255,177,0}{$\gamma_1$}}
    \end{overpic}
    
    \caption{An example of the construction of the loop $\Gamma = \gamma_1 \cup \gamma_2$.}
    \label{fig:construction-example}
\end{figure}

Next we choose a simple path $\gamma_2$ from $q$ to $p_5$ that is disjoint from $\alpha$ and not isotopic to $\gamma_1$, such that the intersection of $\gamma_2$ and $\beta$ consists of precisely one point in the $\beta$-component of each 4-disk in the disk sequence of $\Phi$. In other words, $\gamma_2$ travels transversely through the $\beta$-component of each 4-disk and then passes out of $D_4$ to $p_5$ without intersecting $\beta$ again, and in particular without intersecting any other disks in the disk sequence of $\Phi$; again, see Figure~\ref{fig:construction-example} for an example. Finally, we define the loop $\Gamma \in K_5$ to be the product of paths $\gamma_2 \circ \gamma_1$.  

By Lemma~\ref{lemma:4s become 5s}, the disk sequence of $(\beta) \Gamma = (\beta_*^3) (\Phi \cdot \Gamma)$ is exactly the same as that of $\beta = (\beta_*^3) \Phi$, except that each 4-disk in the original disk sequence of $\Phi \in \B_4$ has been replaced by a 5-disk that is necessarily sign-changing.  Hence $f(\Phi) \cdot \Gamma \in \B_5$ satisfies the parity condition.  

Consider next the disk sequence of $(\beta_*^3) \cdot \Gamma$. Note that $\Gamma$ is properly simple modulo 4-disks; condition (1) is immediately satisfied by the constriction, and condition (2) is satisfied as well from the fact that any proper product $\Phi$ yields a disk sequence containing a 3-disk bounding $p_1,p_2,p_3$. By Lemma \ref{lemma:properly simple modulo 4-disks}, this $\Gamma$ satisfies the parity condition as well. 

% We claim that any disk in this sequence is either a 5-disk, a 1-disk containing $p_5$, or is equivalent to a disk formed with $\delta_i$ for some $i \geq 2$ as shown in Figure~\ref{figure:wn-disks-five}; it then follows that $\Gamma$ also satisfies the parity condition.  

% To see this, we first note that by construction $\gamma_1$ is a simple arc, and we may assume that it admits no bigon-forming polygons with $\beta_*^3$ and $\alpha$.  Since $\gamma_1$ is disjoint from $\beta = (\beta_*^3) (\Phi)$, and since $\Phi$ can be written as a proper product of the form $\Phi' \cdot \Gamma_1$, we may further assume without loss of generality that each $\alpha$-$\gamma_1$ disk is equivalent to one of the five disks shown in Figure~\ref{figure:wn-disks-five}; otherwise we could have made a different choice of $\gamma_1$.  It follows that any $\alpha$-$\beta$ disks that result from pushing $\beta_*^3$ along $\gamma_1$ will also be equivalent to one of the five disks shown in Figure~\ref{figure:wn-disks-five}, or else it is a 1-disk containing only $p_5$.  In order to complete the push around $\Gamma$, we next push along $\gamma_2$.  As noted above, the effect of doing so will be to replace any existing 4-disks formed in the first stage with 5-disks.  The claim follows, and we have established the first statement of the proposition.

It remains to show that we can always choose $\Gamma$ to ensure the following statement holds:
\begin{equation}
\label{equation:not equal}
    \iota(\alpha, (\beta_*^3)\cdot\Gamma) \neq \iota (\alpha, (\beta_*^3)(\Phi\cdot\Gamma))
\end{equation}

Suppose now that in our construction of $\Gamma$ as above, we have $\iota(\alpha, (\beta_*^3) \cdot \Gamma) = \iota (\alpha, (\beta_*^3)(\Phi \cdot \Gamma))$, or in other words, $\iota(\alpha, (\beta_*^3) \Gamma) = \iota (\alpha, (\beta) \Gamma)$.  Our strategy is to alter our construction of the loop $\Gamma$ slightly so that the left-hand side of this equality increases, while the right-hand side remains unchanged.  

To this end, we will form a new loop $\Gamma'$ starting with the same choice of $\gamma_1$ as before, but we will choose a different arc $\gamma_2'$ joining the point $q$ to $p_5$ to replace $\gamma_2$, and set $\Gamma' = \gamma_1 \cup \gamma_2'$.  We note that it is sufficient to find such an arc $\gamma_2'$ satisfying the following two properties:
\begin{enumerate}
\item as we traverse $\gamma_2'$ from $q$ to $p_5$, its first point of intersection with $\beta = (\beta_*^3)\cdot(\Phi)$ does not occur until after its final point of intersection with $\alpha$; and
\item $\iota(\alpha, (\beta_*^3) \cdot \Gamma') > \iota(\alpha, (\beta_*^3) \cdot \Gamma)$.
\end{enumerate}
This follows from the fact that the first condition implies that $\iota (\alpha, (\beta) \Gamma') = \iota (\alpha, (\beta) \Gamma)$.  

Revisiting the construction of $\Gamma$, let $s$ denote the final point of intersection of $\gamma_1$ with $\alpha$ as we travel along $\gamma_1$, and let $(\gamma_1)_F$ denote the subarc of $\gamma_1$ joining $s$ to the point $q$.  Since $D_4 \backslash (\beta \cup \gamma_1)$ is connected, there exists an arc $\gamma'$ joining $q$ to a point $r \in \alpha$, where the point $r$ depends on the choice of $\gamma'$, such that $\gamma'$ is disjoint from $\beta$ and such that $\gamma' \cap \gamma_1 = \{ q \}$.  

We will now consider two cases.  First, suppose that $\gamma'$ can be chosen so that, the arc $\gamma' \cup (\gamma_1)_F$, together with the subarc of $\alpha$ joining $r$ to $s$, forms a disk $\Delta$ containing $p_1, p_2, p_3$, and $p_4$.  

    \begin{figure}[htpb!]
        \centering
            \begin{overpic}[width=0.7\linewidth]{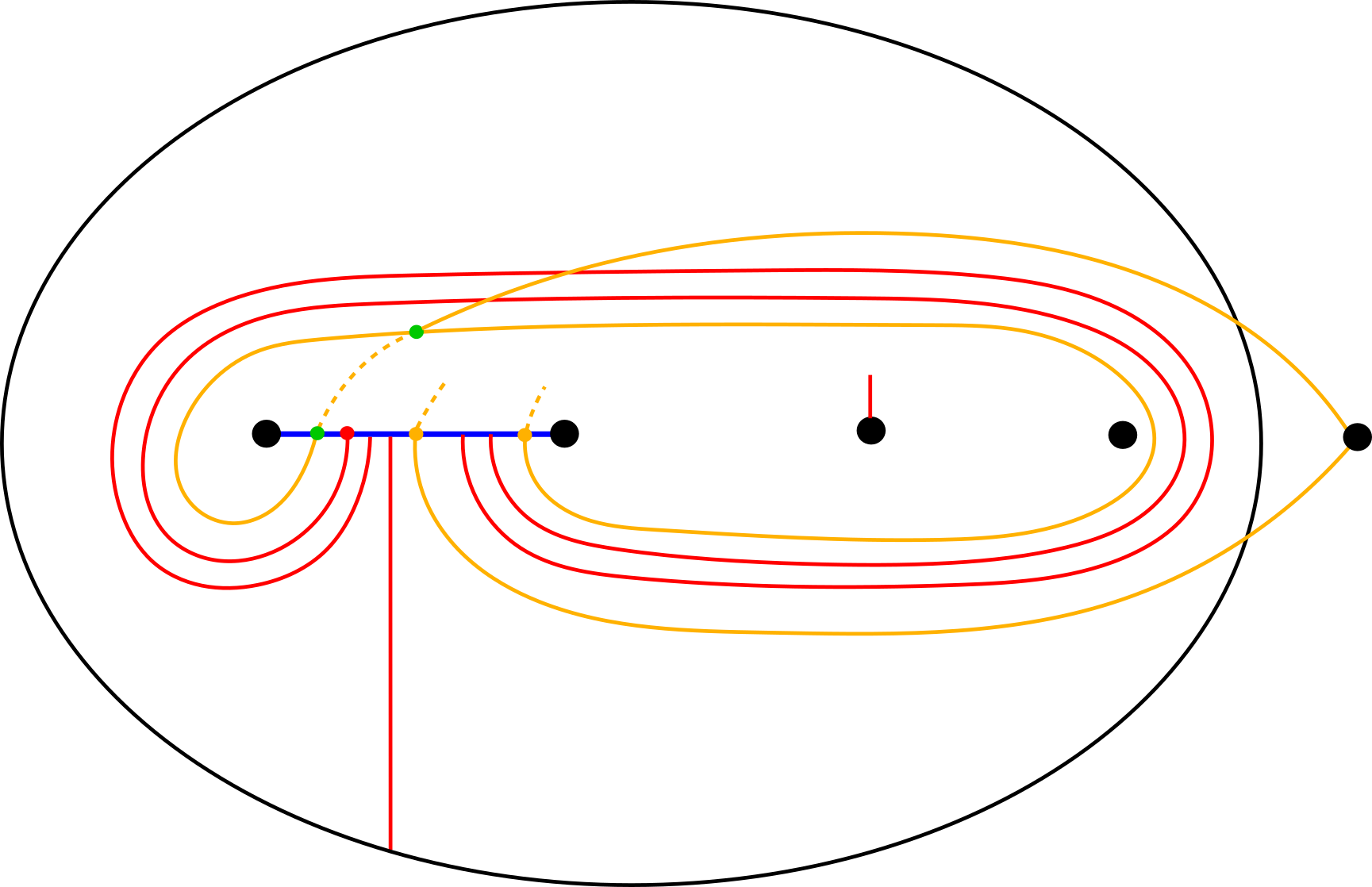}
                \put(98,28){$p_5$}
                \put(78,4){$D_4\subseteq D_5$}
                \put(29, 38){\textcolor[RGB]{0,200,0}{$q$}}
                \put(36, 34){\textcolor[RGB]{255,177,0}{$s$}}
                \put(21, 34){\textcolor[RGB]{0,200,0}{$r$}}
                \put(22, 37){\textcolor[RGB]{255,177,0}{$\gamma''$}}
                \put(14, 30){\textcolor[RGB]{255,177,0}{$\gamma'$}}
                \put(60, 49){\textcolor[RGB]{255,177,0}{$\gamma_2$}}
                \put(65, 27){\textcolor[RGB]{255,177,0}{$(\gamma_1)_F$}}
            \end{overpic}
        \caption{An alternate construction of $\gamma_2$: Case 1.}
        \label{figure:adjusting gamma_2 case 1}
    \end{figure}

We next look for an arc $\gamma''$ from $r$ back to $q$ satisfying the following:
\begin{enumerate}
\item the arc $\gamma''$ is disjoint from $\beta$;
\item $\gamma'' \cap (\gamma_1 \cup \gamma') = \{q, r \}$; and 
\item $\gamma''$ is not isotopic to $\gamma'$ relative to the endpoints $\{q, r \}$.
\end{enumerate}
This is equivalent to finding a nontrivial loop based at $r$ in $D_4 \backslash (\beta \cup \gamma_1 \cup \gamma')$.  This is always possible since $\beta \cap (\gamma_1 \cup \gamma') = \emptyset$, and hence $D_4 \backslash (\beta \cup \gamma_1 \cup \gamma')$ is homeomorphic to a disk with three punctures: $p_1, p_2,$ and $p_4$.  Finally, we set $\gamma_2' = \gamma' \cup \gamma'' \cup \gamma_2$, and set $\Gamma' = \gamma_1 \cup \gamma_2'$.  Since the subarc $\gamma_2'$ satisfies the two properties above, we have established the result in this case.

It remains to deal with the case where the arc $\gamma'$ joining $q$ to a point $r \in \alpha$ cannot be chosen so that $\gamma' \cup (\gamma_1)_F$ forms a 4-disk, and hence forms a disk containing a proper subset of $\{ p_1, p_2, p_3, p_4 \}$; recall that we are still using in this case that $\gamma'$ is disjoint from $\beta$ and that $\gamma' \cap \gamma_1 = \{ q \}$. 

We will choose the arc $\gamma'$ to travel from $q$ to a point $r \in \alpha$ in parallel to the $\beta$-component of the boundary of the innermost 4-disk; there are two possibilities here.   One choice will result in $(\gamma_1)_F \cup \gamma'$ forming an $\algam$ disk containing $k$ punctures, where $k < 4$, and the other choice for $\gamma'$ results in an $\algam$ disk containing $n - k$ punctures.  Referring to Figure~\ref{figure:adjusting gamma_2 case 2}, we see that here $\gamma'$ is chosen to follow the innermost such $\beta$-component in the right-hand direction (as shown in the figure), so that $(\gamma_1)_F \cup \gamma'$ forms a 1-disk with $\alpha$; following to the left would form a 3-disk instead.  In other words, we can assume without loss of generality that the $\algam$ disk formed by $(\gamma_1)_F \cup \gamma'$ contains at most two punctures.
    \begin{figure}[htpb!]
        \centering
        \vspace{2mm}
            \begin{overpic}[width=0.6\linewidth]{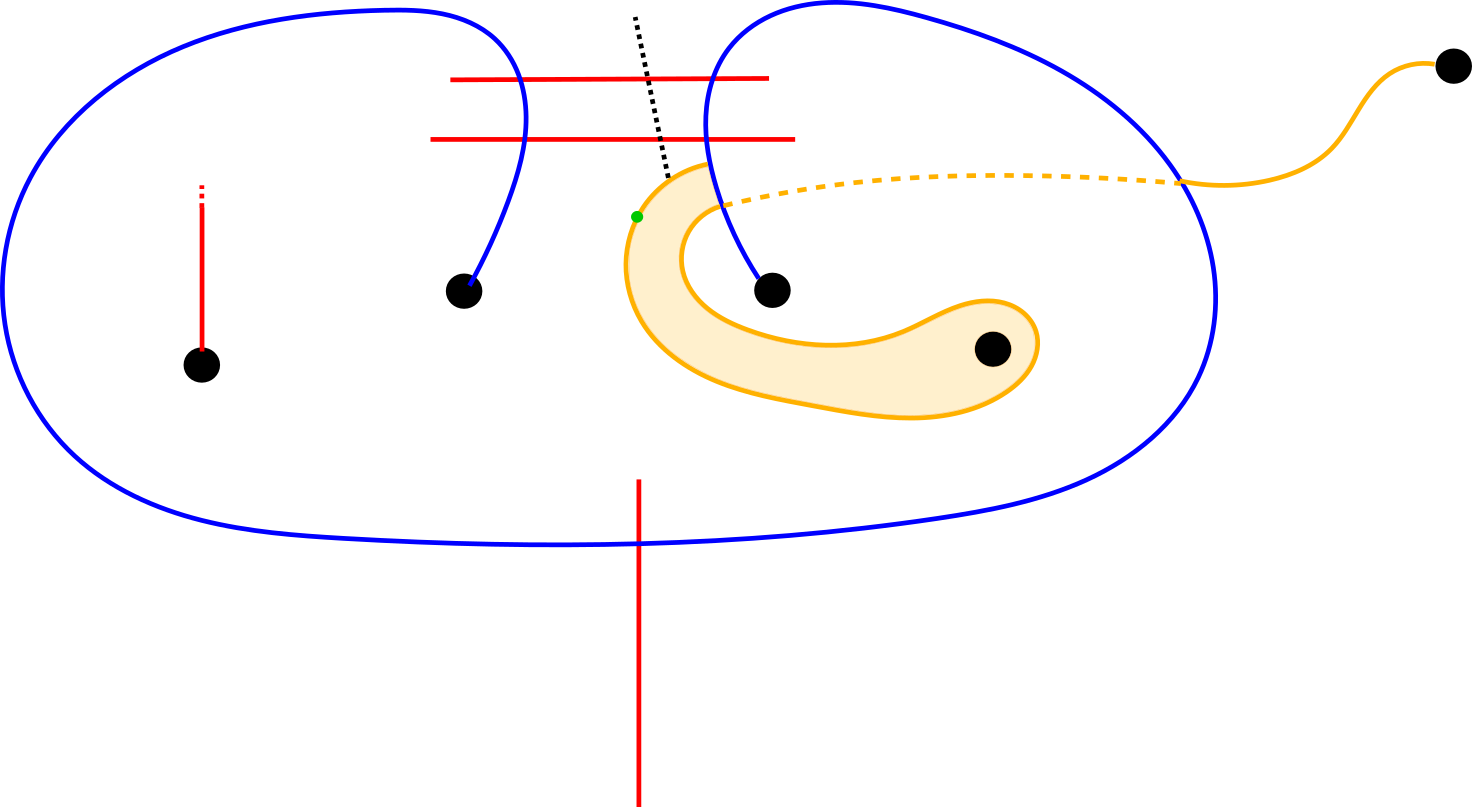}
            \put(40,40){\textcolor[RGB]{0,200,0}{\small $q$}}
            \put(41,56){\textcolor[RGB]{255,177,0}{\small $\gamma'$}}
            \put(50,22){\textcolor[RGB]{255,177,0}{\small $(\gamma_1)_F$}}
            \end{overpic}
        \caption{The horizontal lines in this figure represent $\beta$-components of 4-disks.  The choice of the arc $\gamma'$ shown here yields a disk with 1 puncture, by traveling to the right in parallel with the $\beta$-component of the innermost 4-disk.   }
        \label{figure:adjusting gamma_2 case 2}
    \end{figure}

Our construction now reduces as in the previous case to finding a nontrivial loop based at $r$ in $D_n \backslash (\beta \cup \gamma_1 \cup \gamma' \cup \gamma'')$.  Again, this is always possible since $\beta \cap (\gamma_1 \cup \gamma') = \emptyset$, and since by construction $D_4 \backslash (\beta \cup \gamma_1 \cup \gamma')$ is homeomorphic to a disk with at least one puncture.  This completes the proof of the proposition.
\end{proof}

We remark that the same construction used to find the loop $\Gamma$ in the proof of Proposition~\ref{prop:four-removal} could be iterated to find, for any $n$, and for any $\Phi\in B_n$, some $\Gamma_1,\dots,\Gamma_k$ so that, for the appropriate inclusion map $f$, we have that $f(\Phi) \cdot \Gamma_1 \dots \Gamma_k \in B_{n+k}$ satisfies the parity condition.   The rough idea here is to start with the innermost disk of each occuring type that does not satisfy the parity condition, and find a suitable push-map that ``corrects'' its parity.  However, in this case we cannot guarantee that the braid $\Gamma_1 \dots \Gamma_k$ satisfies the parity condition.  In our case, the fact the braid $\Gamma$ also satisfies the parity condition is important in the proof of our Main Theorem.  The fact that we only need to correct one type of disk, so we only require one push map $\Gamma$, is crucial.  In the general case where $k > 1$, we would not be able to simultaneously assume that $\Gamma_2$ does not cross both $\beta$ and $(\beta)\cdot\Gamma_1$, and so this argument would fail. 

The following corollary of Proposition~\ref{prop:four-removal} enables us to derive conditions on the Moody polynomial of a push-map $\Phi \in K_4$, as part of a proper product.  

\begin{corollary}\label{corollary:unequal-moody}
    If $\Phi \in K_4$ admits a proper product $\Phi = \Phi' \cdot \Gamma_1$ and satisfies $\iota(\alpha, (\beta_*^3) \Phi) > 0$, then there exists a push-map $\Gamma \in K_5$ such that $\Moody_{f(\Phi) \cdot \Gamma} \neq \Moody_{\Gamma}$.
\end{corollary}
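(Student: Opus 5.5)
The plan is to take the push-map $\Gamma \in K_5$ supplied by Proposition~\ref{prop:four-removal}. With this choice $f(\Phi)\cdot\Gamma$ is a proper product in $\B_5$, both $f(\Phi)\cdot\Gamma$ and $\Gamma$ satisfy the parity condition, and
\[
\iota(\alpha, (\beta_*^3)(\Phi\cdot\Gamma)) \neq \iota(\alpha, (\beta_*^3)\Gamma).
\]
The idea is to turn this inequality of geometric intersection numbers into an inequality of Moody polynomials. The bridge is Lemma~\ref{lemma:no-cancellations}: a braid satisfying the parity condition has a Moody polynomial with no cancellations. For such a braid, summing the absolute values of the coefficients of the simplified Moody polynomial recovers the number of terms of the unsimplified polynomial.

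First I check that the unsimplified Moody polynomials in question really have $\iota(\alpha,\cdot)$ terms. This requires the arcs to be in minimal position with $\alpha$.
\begin{itemize}
\item For $(\beta_*^3)(f(\Phi)\cdot\Gamma)$, minimal position comes from Proposition~\ref{prop:ypush_minimalposition}, since $f(\Phi)\cdot\Gamma$ is a proper product.
\item For $(\beta_*^3)\Gamma$, I take the representative from the construction in Proposition~\ref{prop:four-removal}. This is the one on which the parity condition was verified via Lemma~\ref{lemma:properly simple modulo 4-disks}. If necessary, I note that the disk sequence used there is that of a minimal-position representative; otherwise the count $\iota$ would not even be the length of the disk sequence.
\end{itemize}
The hypothesis $\iota(\alpha,(\beta_*^3)\Phi)>0$ ensures that the intersection numbers are positive and the disk sequences are meaningful. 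By Observation~\ref{observation:basic}-style compatibility, $f(\Phi)$ acts on $\beta_*^3$ in $D_5$ exactly as $\Phi$ does in $D_4\subset D_5$.

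Next, for any braid $\Psi$ satisfying the parity condition, write $\Moody_\Psi=\sum_{i=1}^m \epsilon_i t^{k_i}$ with $m=\iota(\alpha,(\beta_*^3)\Psi)$. No cancellation means that all monomials with a given exponent share a sign. Hence the $\ell^1$-norm $\|\Moody_\Psi\|$, the sum of absolute values of the coefficients, equals $m$. This norm is invariant under multiplication by $\pm t^k$, so it is independent of the choice of lifts. Applying this to $\Psi=f(\Phi)\cdot\Gamma$ and to $\Psi=\Gamma$ gives
\[
\|\Moody_{f(\Phi)\cdot\Gamma}\| = \iota(\alpha,(\beta_*^3)(\Phi\cdot\Gamma)) \neq \iota(\alpha,(\beta_*^3)\Gamma) = \|\Moody_\Gamma\|,
\]
so $\Moody_{f(\Phi)\cdot\Gamma}\neq\Moody_\Gamma$, even up to units.

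The main obstacle is the bookkeeping step about minimal position. The whole argument relies on the parity condition being verified on minimal-position representatives, so that the unsimplified polynomial has exactly $\iota$ terms. For $\Gamma$ alone this is not literally covered by Proposition~\ref{prop:ypush_minimalposition}, because $\Gamma$ is not a proper product with a nontrivial first factor. I would address this by viewing $\Gamma$ as the product $\mathrm{id}\cdot\Gamma$ and checking that the constructed loop (simple modulo 4-disks, with $\Gamma_I$ disjoint from $\beta_*^3$) has no bigon-forming polygons with $\alpha$ and $\beta_*^3$. Proposition~\ref{prop:ypush_minimalposition} then applies.
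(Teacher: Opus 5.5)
Your proposal is correct and is essentially the paper's proof. You take $\Gamma$ from Proposition~\ref{prop:four-removal}, use Lemma~\ref{lemma:no-cancellations} to identify the sum of the absolute values of the coefficients of $\Moody_{f(\Phi)\cdot\Gamma}$ and of $\Moody_{\Gamma}$ with the corresponding geometric intersection numbers, and conclude from the inequality of those numbers. Your remark that this distinguishes the polynomials even up to units $\pm t^k$ is a useful precision the paper leaves implicit.

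The minimal-position bookkeeping you flag as the main obstacle is moot, so you do not need to verify that $\mathrm{id}\cdot\Gamma$ is a proper product. The argument of Lemma~\ref{lemma:no-cancellations} (via Lemma~\ref{lemma:observation:planarpoly}) works for any transverse representative, so a representative with $N$ crossings satisfying the parity condition has coefficient sum exactly $N$. Computing the same isotopy-invariant polynomial from a minimal-position representative bounds that sum by $\iota$, which forces $N=\iota$.
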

\begin{proof}

 Let $\Phi$ be a braid in $K_4$, and let $\Gamma$ be an associated push-map in $K_5$ as in the statement of Proposition~\ref{prop:four-removal}.  By Proposition~\ref{prop:four-removal}, both $\Gamma$ and $\Phi \cdot \Gamma$ satisfy the parity condition.  It then follows from Lemma~\ref{lemma:no-cancellations} that there are no cancellations in either of the corresponding Moody polynomials $\Moody_{\Gamma}$ and $\Moody_{f(\Phi)\cdot\Gamma}$. 

Let $M_-$ and $M_+$ be the lowest and highest exponents that appear in the Moody polynomial $\Moody_{f(\Phi)\cdot\Gamma}$, and similarly let $M_-'$ and $M_+'$ denote the corresponding values for $\Moody_{\Gamma}$. Then, since neither polynomial has any cancellations, we can write the corresponding Moody polynomials as follows:
\begin{eqnarray*}
    \Moody_{f(\Phi)\cdot\Gamma} = \sum_{j=M_-}^{M_+} a_j t^j & = &|a_{M_-}|+\dots+|a_{M_+}| = \iota(\alpha, (\beta_*^3)(f(\Phi)\cdot\Gamma));  \qquad \mbox{ and } \\
    \Moody_{\Gamma} = \sum_{j=M'_-}^{M'_+} b_j t^j &=& |b_{M_-'}|+\dots+|b_{M'_+}| = \iota(\alpha, (\beta_*^3) \Gamma).
\end{eqnarray*}  
By Proposition \ref{prop:four-removal} we can assume that $\iota(\alpha, (\beta_*^3)(f(\Phi)\cdot\Gamma)) \neq \iota(\alpha, (\beta_*^3) \Gamma)$.  The result follows.
\end{proof}

\subsection{Faithfulness on ${\bm \Brun_4}$}
As described in Section~\ref{section:intro}, the proof of our Main Theorem follows from the next result.  

\begin{theorem}\label{theorem:k4-faithful}
  The Burau representation $\rho_4$ is faithful on its restriction to $\Brun_4$.
\end{theorem}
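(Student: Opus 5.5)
The plan is to argue by contradiction. Suppose $\Phi \in \Brun_4$ is nontrivial and lies in $\ker(\rho_4)$. Since $\Brun_4 \subseteq K_4$, we have $\Phi \in K_4$. By Whittlesey's theorem $\Phi$ is pseudo-Anosov, so $\iota(\alpha, (\beta_*^3)\Phi) > 0$.

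The first step is to replace $\Phi$ by a conjugate that has the form required by Corollary~\ref{corollary:unequal-moody}. Since $\ker(\rho_4)$ and $\Brun_4$ are both normal in $\B_4$, any conjugate $\Psi^{-1}\Phi\Psi$ also lies in $\ker(\rho_4)\cap \Brun_4$. It is again nontrivial and pseudo-Anosov, so it still satisfies $\iota(\alpha, (\beta_*^3)\Psi^{-1}\Phi\Psi) > 0$. It therefore suffices to show the following: every nontrivial element of $K_4$ (or at least of $\Brun_4$) is conjugate within $K_4$ to a braid $\Phi''$ admitting a proper product decomposition $\Phi'' = \Phi' \cdot \Gamma_1$, with $\Gamma_1$ the push-map of Figure~\ref{figure:gamma1}.

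I would attempt this as follows. Write $\Phi$ as a push along a loop $\delta \in \pi_1(D_3, p_4)$, and conjugate by powers of $\Gamma_1$ and by suitable mapping classes fixing $p_4$. The goal is to arrange that the reduced word for the conjugated loop ends in the letter corresponding to $\Gamma_1$, with no cancellation or bigon-forming polygon at the junction. Conjugating in the free group lets us cyclically reduce, and then cyclically rotate so that the terminal letter is the one we want. After that, one must check conditions 1 and 2 of the definition of proper product. Condition 1 asks that no generalized bigons, trigons, or rectangles occur between $\alpha$, $\beta=(\beta_*^3)\Phi'$, and $\Gamma_1$. Condition 2 concerns how $(\Gamma_1)_I$ and $(\Gamma_1)_F$ meet the innermost $p_4$-disk. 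I expect this to go through using minimal position together with the explicit shape of $\Gamma_1$, possibly after a further conjugation by $\Gamma_1$ to fix the orientation issue illustrated in Figure~\ref{figure:proper products}.

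Given such a representative, Corollary~\ref{corollary:unequal-moody} supplies a push-map $\Gamma \in K_5$ with $\Moody_{f(\Phi)\cdot\Gamma} \neq \Moody_{\Gamma}$. Theorem~\ref{theorem:moody2} (applied in $\B_5$) then shows that $f(\Phi) \notin \ker(\rho_5)$. By Observation~\ref{observation:basic}, this forces $\Phi \notin \ker(\rho_4)$, which is the desired contradiction.

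The main obstacle is the conjugation step: showing that every nontrivial element of $K_4$ is conjugate to a proper product ending in $\Gamma_1$. The difficulty is controlling bigon-forming polygons, especially the generalized ones built from the non-simple loop $\Phi'$. My first attempt would be an induction on word length in $\pi_1(D_3,p_4)$, using Proposition~\ref{prop:ypush_minimalposition} to keep $(\beta)\Gamma_1$ in minimal position. I would also use the freedom of conjugating by the stabilizer of $p_4$ in $\B_4$ (which permutes the generators of $\pi_1(D_3,p_4)$ transitively up to conjugacy) to move an arbitrary terminal letter to $\Gamma_1$.
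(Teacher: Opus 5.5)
Your outer skeleton matches the paper's. You pass to a conjugate that is a proper product $\Phi'\cdot\Gamma_1$, use Whittlesey to get $\iota(\alpha,(\beta_*^3)\Phi)>0$, apply Corollary~\ref{corollary:unequal-moody}, and finish with Theorem~\ref{theorem:moody2} in $\B_5$ and Observation~\ref{observation:basic}. The gap is that the only step not already packaged in earlier results, producing the proper-product conjugate, is left as an expectation. Your account of where its difficulty lies is also mistaken.

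\begin{itemize}
\item Condition~1 in the definition of a proper product $\Phi'\cdot\Gamma_1$ concerns piecewise arcs of the \emph{second} factor $\Gamma_1$, not of $\Phi'$. The loop defining $\Phi'$ enters only through the arc $\beta=(\beta_*^3)\Phi'$, which is simple regardless, so its non-simplicity plays no role.
\item Induction on word length and conjugation by the stabilizer of $p_4$ are not needed, and it is not clear they would help. A non-push braid acts on $\pi_1(D_3,p_4)$ by an automorphism that can rewrite the whole word, not just its last letter.
\item The reducedness criterion that actually governs condition~2 is tied to the specific basis $y_1,y_2,y_3$ of Figure~\ref{figure:free-basis}.
\end{itemize}

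The missing ideas, as the paper uses them, are these.

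\textbf{Condition~1.} Since $\Gamma_1$ is simple, every piecewise $\Gamma_1$-arc has length one. So once $\Gamma_1$ is in minimal position with $\alpha$ and $\beta$, the only candidate bigon-forming polygons are ordinary $\beta$-to-$\alpha$ trigons and rectangles. Trigons are removed by an isotopy of $\beta$. Rectangles cannot occur: a rectangle forces two crossings of $\Gamma_1$ with $\alpha$ that are adjacent along $\alpha$ and have the same sign, whereas the crossings of $\Gamma_1$ with $\alpha$ alternate in sign along $\alpha$. Thus condition~1 holds for every $\Phi'$.

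\textbf{Condition~2.} The reduced word of $\Gamma_1$ in the basis $y_1,y_2,y_3$ begins with $y_2^{-1}$. If the reduced word of $\Phi'$ does not end in $y_2$, the juxtaposition is freely reduced and condition~2 is automatic. This is arranged by a conjugation inside $K_4$. For instance, replace $\Phi$ by $\Gamma_1^{-1}\Phi_0\Gamma_1=(\Gamma_1^{-1}\Phi_0)\cdot\Gamma_1$, where $\Phi_0$ is a cyclic rotation of the cyclically reduced form of $\Phi$ that neither begins with $y_2^{-1}$ nor ends with $y_2$. Such a rotation exists, since a nontrivial Brunnian word is not a power of $y_2$. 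This is in the spirit of your ``conjugate by powers of $\Gamma_1$'' remark.

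Without the two geometric observations above, however, your proposal never verifies that the conjugate is a proper product. That is exactly the hypothesis Corollary~\ref{corollary:unequal-moody} requires.
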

\begin{proof}
Let $\Phi$ denote a nontrivial element in $\Brun_4$.  We  make the following elementary observation: $\Phi \in \Brun_4$ lies in the kernel of the Burau representation $\rho_4$ if and only if, for every element $y \in \B_4$, the conjugate $y \Phi y^{-1}$ also lies in the kernel of $\rho_4$.  We also recall from our discussion in Section~\ref{section:intro} that any nontrivial element of $\Brun_4$ is pseudo-Anosov, and that the image of the arc $\beta_*^3$ under a pseudo-Anosov map necessarily intersects the arc $\alpha$ nontrivially.  

Given a simple loop such as $\Gamma_1$ the only possible bigon-forming polygons that can occur are $\beta$-to-$\alpha$ trigons and rectangles.  Now, $\beta$-to-$\alpha$ trigons can be resolved by an isotopy of $\beta$, and rectangles are not possible in this case since the points of intersection between $\alpha$ and $\Gamma_1$ alternate in sign as we travel along $\alpha$. 

Furthermore, if we consider $\Phi$ and $\Gamma_1$ as freely reduced words in the free group $K_4$ with respect to the free basis shown in Figure~\ref{figure:free-basis}, and if the juxtaposition of the two words (yielding the product $\Phi \cdot \Gamma_1$) is also a freely reduced word with respect to this free basis, then the second condition for $\Phi \cdot \Gamma_1$ to be a proper product is immediately satisfied.  One way to see this is to note that, as a freely reduced word in this basis, $\Gamma_1$ begins with $y_2^{-1}$.  If $\Phi \cdot \Gamma_1$ is freely reduced, then $\Phi$ cannot end with $y_2$; if it does, we can replace $\Phi$ with an appropriate conjugate that does not end with $y_2$.  Thus we may assume that $\Phi$ has been conjugated so that it can be written as a proper product $\Phi = \Phi' \cdot \Gamma_1$ for some $\Phi' \in K_4$.

\begin{figure}[htpb!]
    \centering
    \begin{overpic}[width=0.3\linewidth]{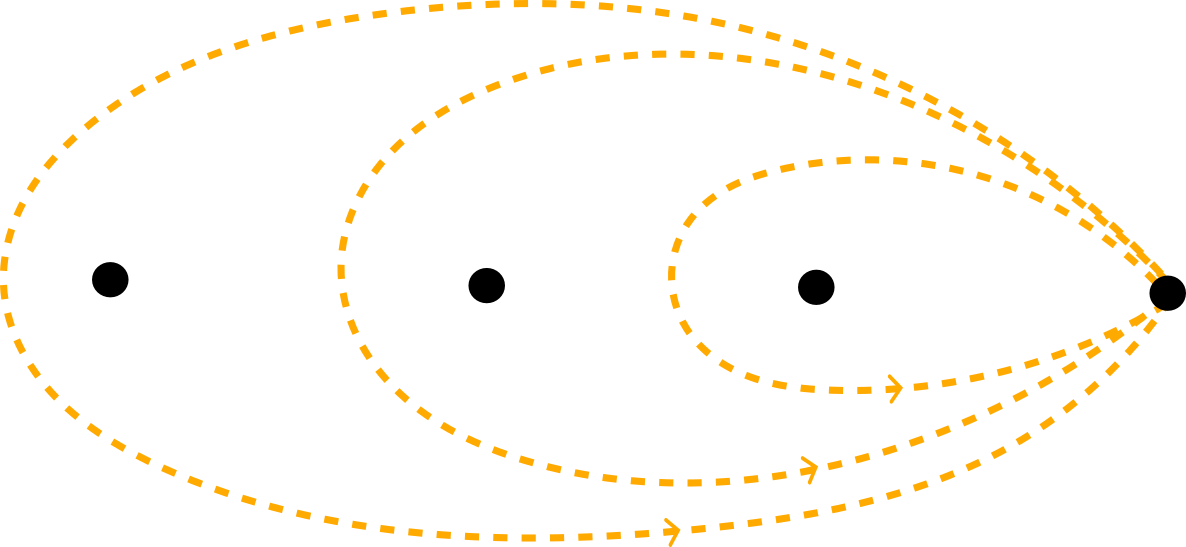}
        \put(54,33){$y_1$}
        \put(22,33){$y_2$}
        \put(-6.5,33){$y_3$}
    \end{overpic}
    \caption{A free basis for $K_4$.}
    \label{figure:free-basis}
\end{figure}

By Corollary~\ref{corollary:unequal-moody}, there is some push-map $\Gamma \in K_5 < \B_5$ so that $\Moody_{f(\Phi) \cdot \Gamma} \neq \Moody_{\Gamma}$. It now follows from Theorem~\ref{theorem:moody2} that $f(\Phi)$ does not lie in the kernel of $\rho_5$, and hence $\Phi$ does not lie in the kernel of $\rho_4$.  Hence the Burau representation of $B_4$ is faithful on the point-pushing subgroup $\Brun_4$.
\end{proof}

\section{An example}
\label{section:appendix}

We end with an illustrative example of a push-map $\Gamma \in K_5$ of the form guaranteed by Proposition~\ref{prop:four-removal}.  We will apply $\Gamma$ to an arc $\beta$ whose corresponding Moody polynomial admits cancellation, and we will see that after applying $\Gamma$ there is no longer any cancellation.  Let $\beta$ denote the arc shown in Figure~\ref{fig:beta-example}.  We recall that an arc $\beta$ does not uniquely determine a braid $\Phi$ such that $\beta = (\beta_*^3) \Phi$; nevertheless in what follows it will be useful to make a choice of such a $\Phi$ to simplify notation.

To simplify the calculation while explaining the important parts of the construction, we have chosen to specify an arc $\beta$ whose disk sequence satisfies the same properties as those guaranteed by Lemma~\ref{lemma:only 4}, but our choice of $\beta$ does not arise as $(\beta_*^3) \Phi$ for any choice of $\Phi \in K_4$.  (Recall that we restricted to $K_4$ in Theorem~\ref{theorem:k4-faithful} in order to ensure that $\Phi$ can be written as a proper product $\Phi = \Phi' \cdot \Gamma_1$, which allows us to apply Lemma~\ref{lemma:only 4}.)  We make this choice for illustrative purposes because the simplest example of an arc $\beta$ admitting cancellations, and arising from a proper product of the form $\Phi = \Phi' \cdot \Gamma_1 \in K_4$, would have many more intersections.  

\begin{figure}[htpb!]
    \centering
    \includegraphics[width=0.7\linewidth]{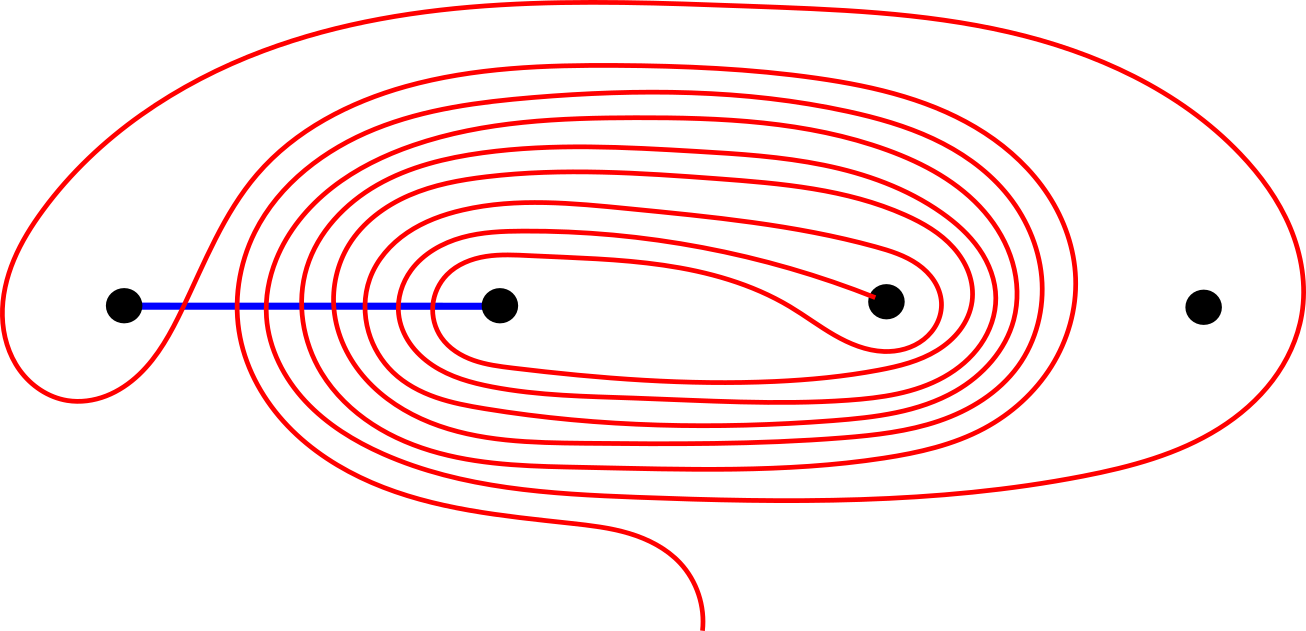}
    \caption{An example of an arc $\beta$ whose Moody polynomial admits a cancellation.}
    \label{fig:beta-example}
\end{figure}

Note that any braid $\Phi$ inducing the arc $\beta$ does not satisfy the parity condition. Here and throughout this appendix, we write the terms of the unsimplified Moody polynomial in order corresponding to points of intersection of $\alpha$ with $\beta$ as we travel along $\beta$ from the basepoint to $p_3$.  We compute the Moody polynomial for $\Phi$ as follows, noting that the linear terms appear with opposite signs, yielding a cancellation.
\begin{eqnarray*}
    \Moody_{\Phi} &=& t^0 + t^2 + t^4 - t^3 - t + t^{-3} + t^{-1} + t \\
    &=& 1 + t^2 + t^4 - t^3 + t^{-3} + t^{-1}.
\end{eqnarray*}

Following the procedure given in the proof of Proposition~\ref{prop:four-removal}, we obtain the loop $\Gamma\in K_5$ shown in Figure~\ref{fig:gamma}. 
\begin{figure}[htpb!]
    \centering
    \begin{overpic}[width=0.7\linewidth]{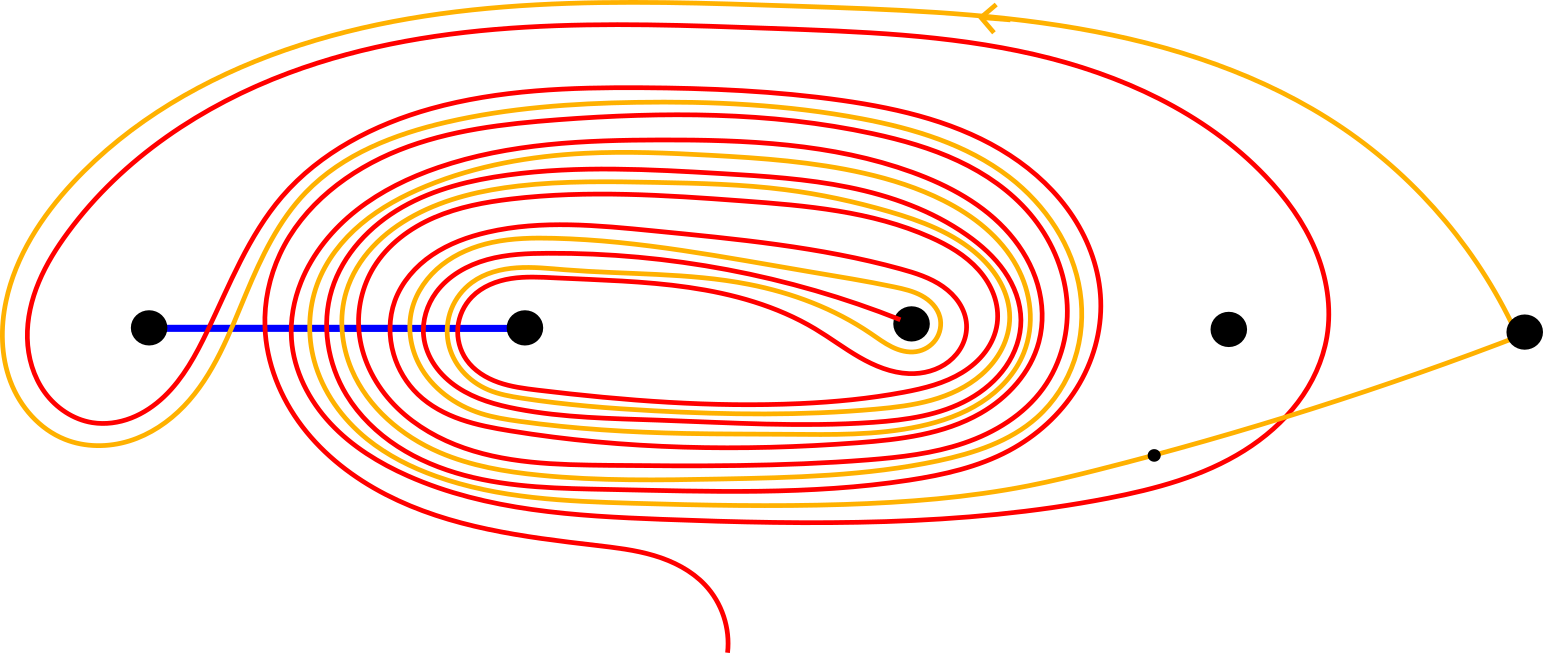}
        \put(74, 15){\small $q$}
    \end{overpic}
    \caption{The push map $\Gamma\in K_5$ shown in gold, with the point $q$ marked from the proof of Proposition \ref{prop:four-removal}.}
    \label{fig:gamma}
\end{figure}
Pushing $\beta$ along $\Gamma$ gives us $\beta \cdot \Gamma$ as shown in Figure~\ref{fig:beta-gamma}, which now satisfies the parity condition.   
\begin{figure}[htpb!]
    \centering
        \includegraphics[width=.8\linewidth]{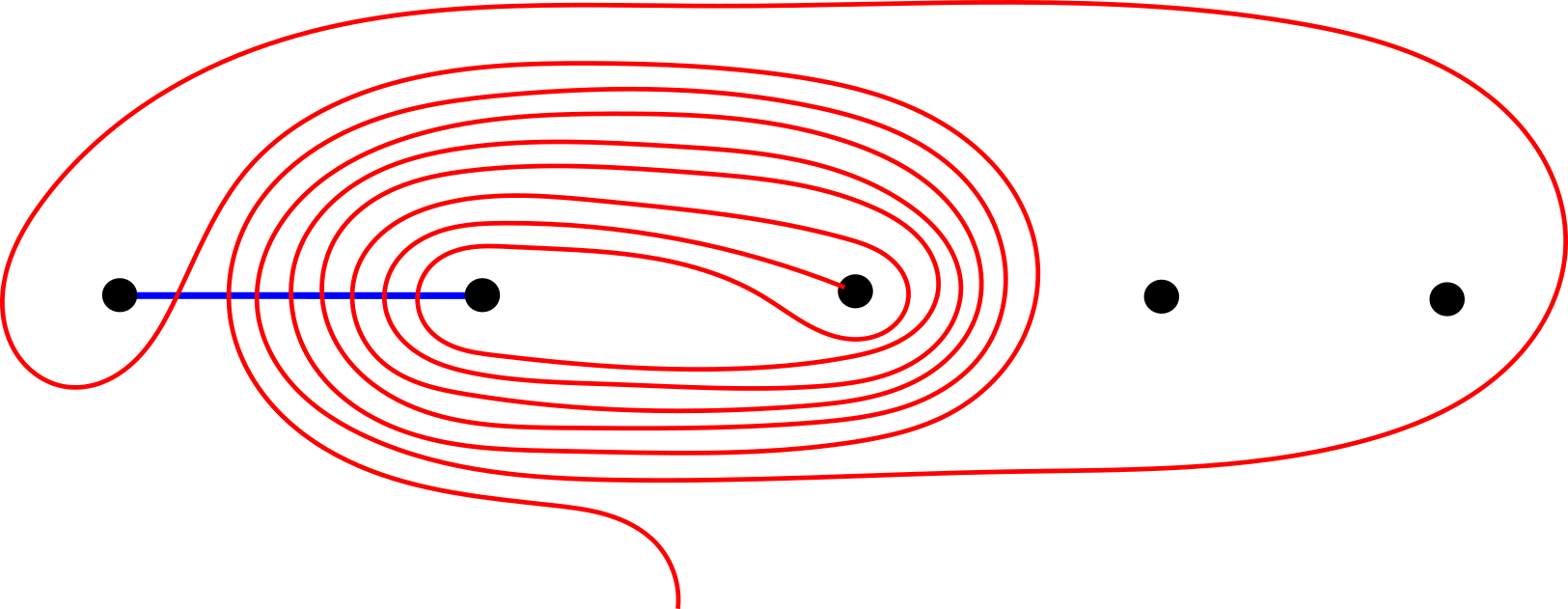}
        \caption{The image $(\beta) \Gamma$ in $D_5$}
    \label{fig:beta-gamma}
\end{figure}
We next compute the Moody polynomial of the arc $\Phi \cdot \Gamma$: 
\begin{eqnarray*}
    \Moody_{\Phi \cdot \Gamma}(t) &=& t^0 + t^2 + t^4 - t^3 - t + t^{-4} + t^{-2} + t^0 \\
    &=& 2 + t^2 + t^4 - t^3 - t + t^{-4} + t^{-2}.
\end{eqnarray*}
Comparing with our calculation of the unsimplified Moody polynomial of $\Phi$ above, we see that the effect of $\Gamma$ on the Moody polynomial has been to increase the exponent of each the last three terms (those following the appearance of the 5-disk) by 1, and there is no longer any cancellation.  

Furthermore, the arc $(\beta_*^3)\cdot \Gamma$ determining the Moody polynomial of $\Gamma$ is shown in Figure~\ref{fig:gamma-image} and satisfies the parity condition as well.
The unsimplified Moody polynomial for the arc $(\beta_*^3)\cdot \Gamma$ has 20 terms; the key point here is that any $\beta$-arc satisfying the parity condition may have any number of terms appearing with equal exponents, but every term with the same exponent will have the same sign.  
\begin{eqnarray*}
\Moody_\Gamma(t) &=& -t^0 - t^2 + t + t^{-1} - t^0 - t^2 + t^3 + t + t^{-1} - t^{-2} - t^0 + t^{-1} + t^{-3} - t^{-2} - t^0 + t + t^{-1} + t^{-3} - t^{-2} - t^0 \\
&=& 2 t^{-3} - 3 t^{-2} + 4t^{-1} - 5 + 3 t - 2t^2 + t^3.
\end{eqnarray*}
\begin{figure}[htpb!]
    \centering
        \includegraphics[width=.8\linewidth]{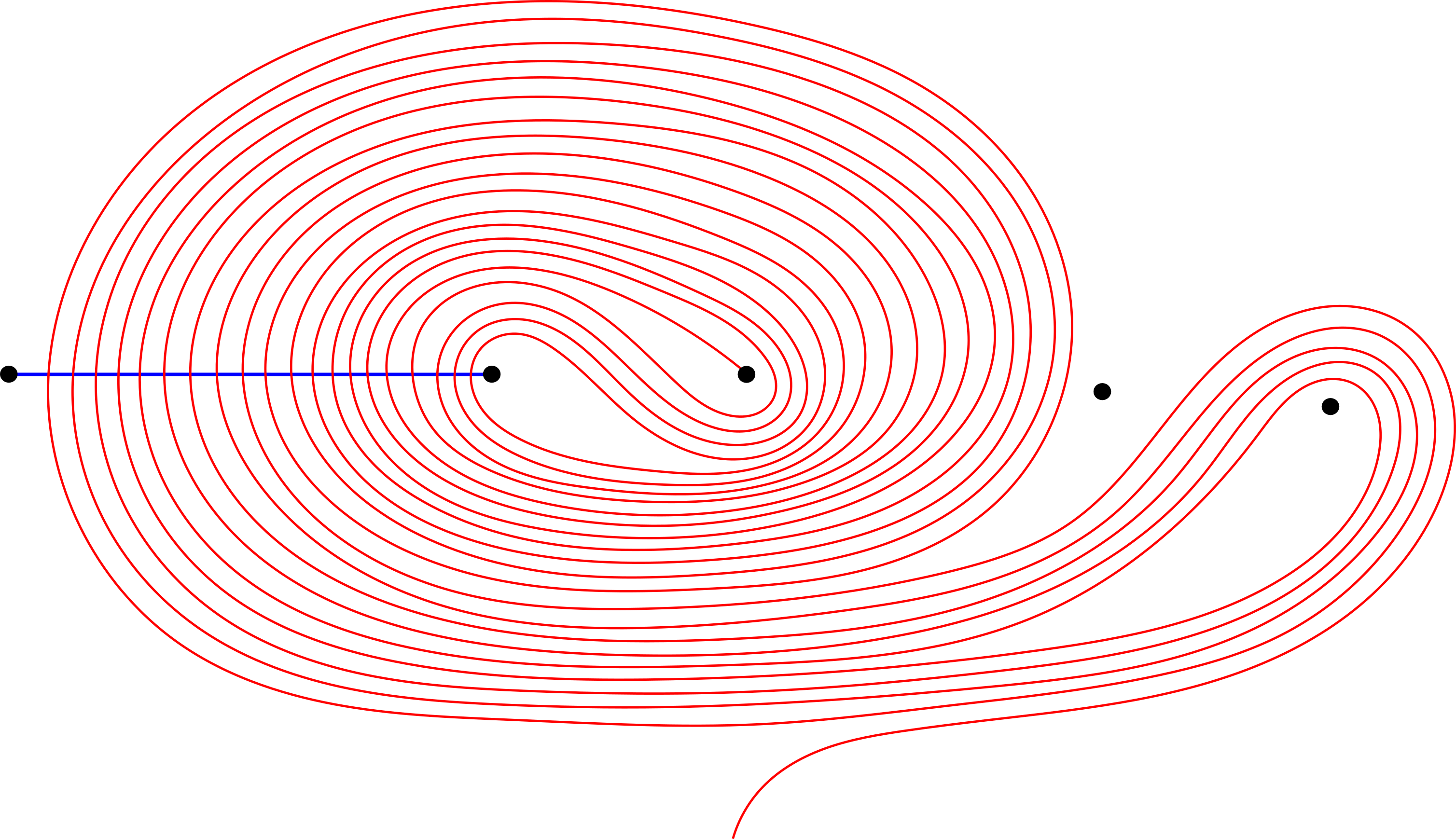}
        \caption{The image $(\beta_*^3)\cdot \Gamma$ in $D_5$.  }
    \label{fig:gamma-image}
\end{figure}

\bibliographystyle{alpha}
\bibliography{Burau4}

\end{document}